\definecolor{indigo}{rgb}{0.29, 0.0, 0.51}  
\newtheorem{theorem}{Theorem}[section]
\newtheorem{lemma}[theorem]{Lemma}
\newtheorem{proposition}[theorem]{Proposition}
\newtheorem{corollary}[theorem]{Corollary}
\theoremstyle{definition}
\newtheorem{definition}[theorem]{Definition}
\newtheorem{conjecture}[theorem]{Conjecture}
\newtheorem{remark}[theorem]{Remark}
\newcommand{\F}{{\mathcal{F}}}
\newcommand{\Op}{{\mathcal{O}p}}
\newcommand{\Cont}{\operatorname{Cont}}
\newcommand{\FCont}{\operatorname{FCont}}
\newcommand{\Diff}{\operatorname{Diff}}
\newcommand{\Iso}{\operatorname{Iso}}
\newcommand{\Id}{\operatorname{Id}}
\newcommand{\Loose}{\operatorname{Loose}}
\newcommand{\U}{\operatorname{U}}
\newcommand{\SO}{\operatorname{SO}}
\newcommand{\GL}{\operatorname{GL}}
\newcommand{\Path}{\operatorname{Path}}
\newcommand{\Emb}{\operatorname{Emb}}
\newcommand{\FEmb}{\operatorname{FEmb}}
\newcommand{\ot}{\operatorname{OT}}
\newcommand{\std}{\operatorname{std}}
\newcommand{\tb}{\operatorname{tb}}
\newcommand{\rot}{\operatorname{rot}}
\newcommand{\inv}{\operatorname{inv}}
\newcommand{\rel}{\operatorname{rel}}
\newcommand{\ev}{\operatorname{ev}}
\newcommand{\image}{\operatorname{Image}}
\newcommand{\Z}{\mathbb{Z}}
\newcommand{\R}{\mathbb{R}}
\newcommand{\D}{\mathbb{D}}
\newcommand{\NS}{\mathbb{S}}
\newcommand{\CFra}{\operatorname{CFr}}
\newcommand{\CStr}{\mathcal{C}}
\newcommand{\FC}{F\mathcal{C}}
\title{Convex disks with Legendrian boundary in overtwisted contact $3$-manifolds}
\author{Dahyana Farias}
\address{Departamento de Matem\'aticas \\ Universidad Aut\'onoma de Madrid \\ Madrid \\ Spain}
\email{dahyana.farias@estudiante.uam.es \\ dahyana.farias@icmat.es}
\author{Eduardo Fern\'andez}
\address{Department of Mathematics\\ University of Georgia\\ Athens\\ GA, USA}
\email{eduardofernandez@uga.edu}
\author{Francisco Presas}
\address{Instituto de Ciencias Matem\'{a}ticas CSIC-UAM-UC3M-UCM, C. Nicol\'{a}s Cabrera, 13-15, 28049 Madrid, Spain.}
\email{fpresas@icmat.es}
\author{Guillermo S\'anchez-Arellano}
\address{Departamento de \'Algebra, Geometr\'ia y Topolog\'ia, Facultad de Ciencias Matem\'aticas, Universidad Complutense de Madrid, 
	Plaza de Ciencias 3, 28040 Madrid, Spain, and Instituto de Ciencias Matem\'{a}ticas CSIC-UAM-UC3M-UCM, C. Nicol\'{a}s Cabrera, 13-15, 28049 Madrid, Spain.}
\email{guillermo\_sanchez@ucm.es}
\begin{document}

\begin{abstract}
  We classify convex disks with a fixed characteristic foliation and Legendrian boundary, up to contact isotopy relative to the boundary, in every closed overtwisted contact $3$-manifold. This classification covers cases where the neighborhood of such a disk is tight or where the boundary violates the Bennequin-Eliashberg inequality. We show that this classification coincides with the formal one, establishing an $h$-principle for these disks. As a corollary, we deduce that the space of Legendrian unknots that lie in some Darboux ball in a closed overtwisted contact 3-manifold satisfies the $h$-principle at the level of fundamental groups. Finally, we determine the contact mapping class group of the complement of each Legendrian unknot with non-positive $\tb$ invariant in an overtwisted $3$-sphere.
\end{abstract}

\maketitle

\section{Introduction}

Overtwisted contact structures were introduced by Eliashberg in $3$ dimensions, and by Borman-Eliashberg-Murphy in all dimensions in the seminal works \cite{BEM,EliashbergOT}. Since these structures are governed by an $h$-principle, the results in the literature regarding classification problems have been scarce. Some results regarding the coarse classification of submanifolds, i.e., up to contactomorphisms smoothly isotopic to the identity, have been obtained by Chatterjee, Eliashberg-Fraser, Etnyre, Etnyre-Min-Mukherjee, and Geiges-Onaran \cite{Chatterjee,Eliashberg-Fraser,EtnyreOT,EtnyreMinMuk,GeigesOnaran,GeigesOnaran2}. On the other hand, contact isotopy questions in overtwisted contact manifolds can be intricate even for loose objects, i.e., those with an overtwisted complement, as shown independently by Chekanov and Vogel \cite{VogelOvertwisted}. Nevertheless, there have been few works dealing with classifications up to contact isotopy, such as the aforementioned \cite{VogelOvertwisted}, the works of Cahn-Chernov, Cardona-Presas, Ding-Geiges, Dymara, and Farias-Fern\'andez \cite{CahnChernov,cardonapresas,DingGeigesHandle,DymaraLegendrians,Dymara,F-F} on the classification of loose objects avoiding a fixed overtwisted disk, or the work of Fern\'andez on strongly overtwisted contact $3$-manifolds \cite{F-SOT}.

In this article, we study convex disks with fixed characteristic foliations and fixed Legendrian boundaries, up to contact isotopy, in closed overtwisted contact $3$-manifolds. In most cases, we achieve a full classification by establishing a $1$-parametric $h$-principle result. The analogous question for tight contact manifolds was studied by Colin in \cite{Colin}, with its parametrized version analyzed by Fern\'andez-Mart\'inez--Aguinaga-Presas in \cite{FMP22}. The methods employed throughout this article rely on a version of Eliashberg's $h$-principle \cite{EliashbergOT} with non-fixed overtwisted disk, combined with a careful study of isotopies of overtwisted disks.

\subsection{Main result}
Let $(M,\xi)$ be a cooriented overtwisted contact $3$-manifold
and $\D^2\subseteq (M,\xi)$ an embedded convex $2$-disk with Legendrian boundary and characteristic foliation $\mathcal{F}$ determined by $T\D^2\cap \xi$. We will say that the characteristic foliation $\F$ is tight, resp. overtwisted, whether the germ of contact structures in the neighborhoods of the disk is tight, resp. overtwisted.  We define the \em space of $\mathcal{F}$-embeddings \em as the space of 
all smooth embeddings of $\D^2\hookrightarrow (M, \xi)$ that induce the characteristic foliation $\mathcal{F}$ on $\D^2$, we also require all the embeddings to agree in a neighborhood of the Legendrian boundary $\partial \D^2$. This space is denoted by $\Emb(\D^2, \mathcal{F}, (M, \xi),\rel\partial)$. 

We will also consider the space of \emph{formal $\F$-embeddings} that are a fixed genuine $\F$-embedding near the boundary. It will be denoted by $\FEmb(\D^2,\F,(M,\xi),\rel \partial)$ and it is precisely defined in \ref{sec:$F$-emb}. Roughly speaking, a formal $\F$-embedding can be thought as a pair $(e,P_s)$, where $e:\D^2\hookrightarrow M$ is a smooth embedding and $P_s$, $s\in [0,1]$, is a homotopy of plane fields over $e(\D^2)$ so that $P_0=de(T\D^2)$ and $P_1\cap\xi$ determines the charcteristic foliation $\F$.


Recall that for a null-homologous oriented Legendrian knot $\Lambda\subseteq (M,\xi)$ with Seifert surface $\Sigma$ there are two classical invariants, namely the Thurston-Bennequin number $\tb(\Lambda)$ and the rotation number $\rot(\Lambda,[\Sigma])$. Although the Thurston-Bennequin invariant is independent of choices the rotation number could depend on the homology class of the Seifert surface $[\Sigma]\in H_2(M,\Lambda)$ whenever the Euler class $e(\xi)\in H_2(M)$ is non-zero. We refer the reader to \cite{GeigesBook} for a careful discussion on this.  In the cases in which the choice of Seifert surface is clear from the context, such as when considering the Legendrian boundary $\partial \Sigma$ of a surface $\Sigma$,  we will simply write $\rot(\Lambda)$. 

\begin{theorem}\label{thm:principal}
    Let $(M,\xi)$ be a closed overtwisted contact $3$-manifold and $\D^2\subseteq (M,\xi)$ an embedded convex disk with Legendrian boundary and characteristic foliation $\mathcal{F}$. Assume that either $\F$ is tight or $\tb(\partial\D^2)+|\rot(\partial \D^2)|>-1$. Then, the inclusion
\begin{equation}\label{eq:MainInclusion}\Emb(\D^2,\F,(M,\xi),\rel \partial)\hookrightarrow \FEmb(\D^2,\F,(M,\xi),\rel \partial)
\end{equation} induces an isomorphism at the level of path-connected components. Moreover, the first relative homotopy group is trivial: $$\pi_1 (\FEmb(\D^2,\F,(M,\xi),\rel \partial),\Emb(\D^2,\F,(M,\xi),\rel \partial))=0.$$
\end{theorem}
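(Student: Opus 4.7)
The plan is to reduce the problem to Eliashberg's $h$-principle for overtwisted contact $3$-manifolds in a relative form: once an overtwisted disk $\Delta$ disjoint from the image of $\D^2$ is available (parametrically, if needed), the $h$-principle relative to a Darboux neighborhood of $\Delta$ and to $\Op(\partial\D^2)$ absorbs the formal $\F$-embedding data into genuine data. The whole argument then splits into three tasks: (a) producing a disjoint overtwisted disk, and, for parametric families, a continuous family of such disks; (b) running the $h$-principle for $\pi_0$; (c) promoting this to the $1$-parametric setting to kill the relative $\pi_1$.

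For task (a), the two hypotheses give two routes. If $\F$ is tight, a standard tubular neighborhood $U$ of $e(\D^2)$ is a tight contact manifold, so every overtwisted disk of $(M,\xi)$ already sits in $M\setminus U$ and is automatically disjoint from the image. If instead $\tb(\partial\D^2)+|\rot(\partial\D^2)|>-1$, the Bennequin--Eliashberg inequality for the disk is violated, so a neighborhood of $\D^2$ is itself overtwisted; the excess in the $\tb+|\rot|$ count should allow me to shuffle stabilizations of $\partial\D^2$ and exhibit an overtwisted disk in a half-collar of $\partial\D^2$ on the side away from the interior of $\D^2$, hence disjoint from $\D^2$. Granted this, the $\pi_0$-surjectivity of \eqref{eq:MainInclusion} follows by choosing a formal $\F$-embedding $(e,P_s)$, picking such a $\Delta\subset M\setminus e(\D^2)$, and applying Eliashberg's $h$-principle in $(M,\xi)$ rel $\Delta\cup\Op(\partial\D^2)$, which converts $P_s$ into an ambient isotopy making $e$ genuine. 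The $\pi_0$-injectivity is the same argument run one dimension higher: a path $\hat e_t$ of formal $\F$-embeddings between two genuine $\F$-embeddings is treated as a formal $\F$-embedding of $\D^2\times[0,1]$ rel $\D^2\times\{0,1\}$, straightened to a genuine isotopy relative to a $1$-parameter family of overtwisted disks furnished by step (a).

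The relative $\pi_1$-statement is the $1$-parametric version of the above: a based loop of formal $\F$-embeddings at a genuine basepoint is viewed as a formal $\F$-embedding of $\D^2\times\D^2$ into $(M,\xi)$ with prescribed boundary data, and is contracted through formal $\F$-embeddings to a loop of genuine ones by Eliashberg's $h$-principle in $2$ parameters, relative to a family of overtwisted disks. The main obstacle I anticipate is precisely the parametric form of step (a): arranging a continuous $2$-parameter family of overtwisted disks disjoint from a varying family of images $\hat e_{s,t}(\D^2)$, without letting any of them collide with $\D^2$ or degenerate, and while preserving the fixed data near $\partial\D^2$. This is the ``careful study of isotopies of overtwisted disks'' announced in the introduction, and it is where the hypothesis $\F$ tight or $\tb(\partial\D^2)+|\rot(\partial\D^2)|>-1$ carries its weight: it guarantees that the overtwisted disks produced near $\D^2$ have enough room in the normal direction to be moved continuously off $\D^2$ in families, so that the relative $h$-principle can actually be invoked.
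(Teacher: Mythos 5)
Your overall architecture matches the paper's: find overtwisted disks disjoint from the (families of) disks, then invoke a version of Eliashberg's $h$-principle relative to those disks and to $\Op(\partial\D^2)$ (the paper packages this as the ``loose overtwisted $h$-principle,'' Corollary \ref{cor:DisksWithOTDisk}). However, there is a genuine gap at precisely the step you flag and then wave away. Your claim that the hypotheses guarantee ``enough room in the normal direction to move the overtwisted disks off $\D^2$ in families'' is not an argument, and it also misidentifies the actual difficulty. The real problem, solved in Section \ref{sec:Creating-isotopies-of-overtwisted-disks}, is that for a given path of embeddings one \emph{cannot} in general produce the required path of disjoint overtwisted disks: one must first deform the isotopy $e^t$ itself (this is why Theorem \ref{thm:IsotopyOvertwistedDisks} outputs a homotopy of isotopies $e^{t,u}$, not just a family $\Delta^t$). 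Concretely, the paper discretizes the isotopy into pinched $3$-disks via Colin's trick, splits into the cases where these are tight (Proposition \ref{prop:TightIsotopy}, handled by convex sphere families and contact isotopy extension) or overtwisted (Proposition \ref{prop:OvertwistedIsotopy}); in the overtwisted case it interpolates between parallel copies of overtwisted disks using the contractibility statement of Theorem \ref{teo:OvertwistedDisks}, matches dividing sets with tight bypasses (Lemma \ref{lema:FixingDividingSet}), and—crucially—confronts a $\Z$-valued formal obstruction $h(\eta)$ (a relative Hopf invariant) to making the resulting path formally constant. That obstruction is removed only by inserting bypass triangles and using Huang's computation that a bypass triangle shifts the Hopf invariant by $-1$ (Theorem \ref{thm:Huang-hopf}). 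Your proposal contains no mechanism for detecting or cancelling this obstruction, and without it the relative $h$-principle cannot be invoked along the path. The role of the hypothesis is also different from what you suggest: it rules out the configuration of two consecutive overtwisted pinched $3$-disks separated by a tight disk with overtwisted characteristic foliation (Remark \ref{rmk:OTDisks}), where the two families of overtwisted disks cannot be interpolated; it is not about normal room.

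Two smaller points. In the tight case you assert that every overtwisted disk of $(M,\xi)$ already lies in $M\setminus U$; that is false (an overtwisted disk may cross $U$). What is true, via Colin's Theorem \ref{Colin97}, is that $M\setminus U$ is overtwisted and hence \emph{contains} an overtwisted disk, which is what the paper uses. Also, for the relative $\pi_1$ statement the relevant objects are paths in $\FEmb$ with endpoints in $\Emb$, not based loops filled by $\D^2\times\D^2$; the paper concludes by showing $\pi_1(\Loose\FEmb,\Loose\Emb)\to\pi_1(\FEmb,\Emb)$ is surjective (this is exactly Theorem \ref{thm:IsotopyOvertwistedDisks}) and that the source vanishes.
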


The proof of this result will rely on a variation of Eliashberg's overtwisted $h$-principle \cite{EliashbergOT} involving a varying overtwisted disk (Corollary \ref{cor:DisksWithOTDisk}). The core of the argument will be to show that, given a formal isotopy $e^t$, $t\in [0,1]$, between two disks $e^0,e^1\in \Emb(\D^2,\F,(M,\xi),\rel \partial)$, there exists an isotopy of overtwisted disks $\Delta^t$, $t\in[0,1]$, such that $\image(e^t)\cap \Delta^t=\emptyset$ for all $t\in[0,1]$. This result will be established in Theorem \ref{thm:IsotopyOvertwistedDisks}.

To construct the required isotopy of overtwisted disks, we will use Colin's trick \cite{Colin} and discretize the given isotopy of disks $e^t$. We will then analyze various cases, depending on whether the corresponding pinched $3$-disks are tight or overtwisted. The case where the pinched $3$-disks are tight is relatively straightforward. However, the case where the pinched 
$3$-disks are overtwisted is more intricate and may fail if the hypotheses of Theorem \ref{thm:principal} are relaxed (see Remark \ref{rmk:Counterexample} for a counterexample and Remark \ref{rmk:OTDisks} for a detailed explanation of where the argument would break). Moreover, in the overtwisted case, homotopical obstructions can arise during the process. To address these obstructions, we will critically leverage the fact that a bypass triangle modifies these homotopical obstructions in a controlled manner, as shown by Huang \cite{Huang}. See Theorem \ref{thm:Huang-hopf}.

\begin{remark}
     Theorem \ref{thm:principal} cannot be directly derived from Eliashberg's $h$-principle \cite{EliashbergOT}, as illustrated by the following example. Consider two disk embeddings $e_0, e_1\in \Emb(\D^2, \mathcal{F}, (\NS^3, \xi_{\std}),\rel\partial)$ in the standard tight $3$-sphere that cobound an embedded pinched $3$-disk $(D,\xi_{\std})$. Perform a contact connected sum $(\NS^3,\xi_{\std})\#(\NS^3,\xi_0)=:(\NS^3,\xi)$, where the connected sum region lies inside $(D,\xi_{\std})$, and $(\NS^3,\xi_0)$ is an overtwisted contact structure in the formal class of the standard one. 

    We regard $e_0$ and $e_1$ as embeddings in $(\NS^3,\xi)$. Note that they now cobound an overtwisted pinched $3$-disk $(D,\xi)$ with tight complement. The schematic representation of this situation is depicted in Figure \ref{fig:toro}. It is straightforward to see that these disks are now formally isotopic. However, there does not exist an overtwisted disk  $\Delta\subseteq (\NS^3,\xi)$ such that $e_0$ and $e_1$ are even smoothly isotopic relative to it. Therefore, Eliashberg's result cannot be directly applied. 
    
    Nonetheless, Theorem \ref{thm:principal} implies that there exists a contactomorphism $\varphi:(\NS^3,\xi)\rightarrow (\NS^3,\xi)$, contact isotopic to the identity relative to the boundary of the disks such that $\varphi\circ e_0=e_1$. This leads to the non-trivial observation that the pinched $3$-disks $(D,\xi)$ and $(\varphi(D),\xi)$, schematically depicted in Figure \ref{fig:toro}, are contact isotopic relative to the equator. 
\end{remark}

\begin{figure}[h]
    \centering
    \includegraphics[scale=0.3]{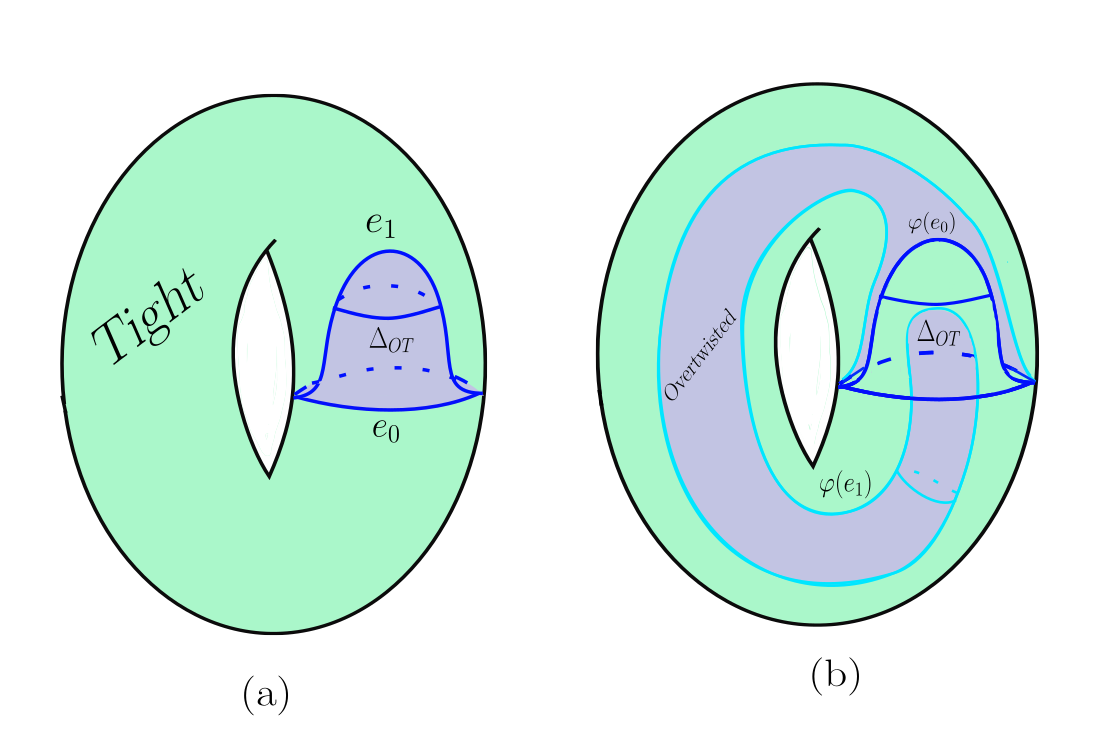}
    \caption{(a). Schematic of the two disks $e_0$ and $e_1$. The situation is described in the solid torus complementary to the boundary of the disks. The pinched $3$-disk $(D,\xi)$ is depicted in gray, the tight region is depicted in green. (b) Effect of applying the contactomorphism $\varphi$ to the pinched $3$-disk $(D,\xi)$. }
    \label{fig:toro}
\end{figure}

\begin{remark}
The work of the second author on strongly overtwisted contact $3$-manifolds addresses the cases in which $\tb(\partial \D^2)+ |\rot(\partial \D^2)| > -1$ or $(M,\xi)$ is strongly overtwisted \cite{F-SOT}. Even more, in these cases the inclusion (\ref{eq:MainInclusion}) is a weak homotopy equivalence. In this article we will address the case in which $\tb(\partial \D^2)+ |\rot(\partial \D^2)|\leq -1$ and the characteristic foliation $\F$ is tight. However, our method works in both cases and is independent of \cite{F-SOT}. 
\end{remark}

\begin{remark}\label{rmk:Counterexample}
   The hypotheses in Theorem \ref{thm:principal} are sharp. Specifically, in the case where $\tb(\partial\D^2) + |\rot(\partial\D^2)| \leq -1$ and $\F$ is overtwisted, the result does not hold in general. For instance, in an overtwisted contact $3$-manifold with disconnected space of overtwisted disks one could take a disk with tight characteristic foliation and use two transverse arcs to connect sum it with two non-isotopic overtwisted disks. Hence obtaining two disks with overtwisted characteristic foliation that are not contact isotopic and can be assumed formally isotopic. However, under the stronger assumption that the space of overtwisted disks is connected, the same conclusion of Theorem \ref{thm:principal} holds also in this case. Notice that this assumption holds for every overtwisted contact $3$-sphere but one \cite{VogelOvertwisted}, and every strongly overtwisted contact $3$-manifold \cite{F-SOT}. It is an open question if there exists an overtwisted contact $3$-manifold $(M,\xi)$ with $M\neq \NS^3$ for which the space of overtwisted disks is disconnected.
\end{remark}

\subsection{Legendrian unknots}
Our main motivation to study spaces of convex disks with Legendrian boundary was to understand spaces of Legendrian unknots in overtwisted contact $3$-manifolds. Let $\Lambda \subseteq (M,\xi)$ be an embedded Legendrian, and fix a parametrization $i:\NS^1\cong \Lambda \hookrightarrow (M,\xi)$. We will denote by $\mathcal{L}(\Lambda,(M,\xi))$ the path-connected component of $i$
in the space of Legendrian embeddings. Similarly, we will denote by $\mathcal{L}_{(p,v)}(\Lambda,(M,\xi))\subseteq \mathcal{L}(\Lambda,(M,\xi))$ the subspace of \em long \em Legendrian embeddings, i.e. Legendrian embeddings $\gamma\in \mathcal{L}(\Lambda,(M,\xi))$ so that $(\gamma(0),\gamma'(0))=(i(0),i'(0))=:(p,v)$. 

Assume that $\Lambda\subseteq (\D^3,\xi_{\std})\subseteq (M,\xi)$ is a Legendrian unknot that lies in a Darboux ball so we may find an embedded convex disk $\D^2\subseteq (\D^3,\xi_{\std})\subseteq (M,\xi)$, with tight characeristic foliation $\mathcal{F}$, such that $\partial \D^2 = \Lambda$. Then, it was observed by Fern\'{a}ndez, Mart\'{i}nez-Aguinaga and Presas in \cite{FMP22} that there is a weak homotopy equivalence 
$$ \Emb(\D^2,\mathcal{F},(M,\xi),\rel \partial) \cong \Omega \mathcal{L}_{(p,v)} (\Lambda, (M,\xi)).$$
In particular, Theorem \ref{thm:principal} allows us to compute the fundamental group of the space of Legendrian unknots that lie in a Darboux ball in every overtwisted contact $3$-manifold. 

Recall that a \em formal Legendrian embedding \em is a pair $(\gamma,F_s)$ such that \begin{itemize}
    \item $\gamma:\NS^1\hookrightarrow M$ is a smooth embedding and 
    \item $F_s:T\NS^1\rightarrow \gamma^*TM$, $s\in [0,1]$, is a homotopy of bundle monomorphisms such that $F_0=d\gamma$ and $\image(F_1)\subseteq \gamma^* \xi$. 
\end{itemize}
Notice that every Legendrian embedding $\gamma$ is naturally a formal Legendrian just by taking $F_s\equiv d\gamma$ the constant homotopy. We will denote by $F\mathcal{L}(\Lambda,(M,\xi))$ the path-connected component of the parametrization $i$ of $\Lambda$ in the space of formal Legendrians.

\begin{corollary}\label{cor:unknots} 
Let $\Lambda\subseteq (M,\xi)$ be an embedded Legendrian unknot in an overtwisted contact $3$-manifold. Assume that $\Lambda$ lies in some Darboux ball. Then, the inclusion 
\begin{equation}\label{eq:InclusionLegendrians}
\mathcal{L}(\Lambda,(M,\xi))\hookrightarrow F\mathcal{L}(\Lambda,(M,\xi))
\end{equation}
induces an isomorphism at the level of fundamental groups. 
\end{corollary}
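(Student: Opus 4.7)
The plan is to derive Corollary~\ref{cor:unknots} from Theorem~\ref{thm:principal} via the loop-space identification of \cite{FMP22}. Fix an embedded convex disk $\D^2\subseteq (M,\xi)$ with tight characteristic foliation $\F$ and $\partial\D^2=\Lambda$, contained in the given Darboux ball, and recall the weak homotopy equivalence
\[ \Emb(\D^2,\F,(M,\xi),\rel\partial)\simeq \Omega\mathcal{L}_{(p,v)}(\Lambda,(M,\xi)). \]
The first step is to establish its formal analogue
\[ \FEmb(\D^2,\F,(M,\xi),\rel\partial)\simeq \Omega F\mathcal{L}_{(p,v)}(\Lambda,(M,\xi)), \]
where $F\mathcal{L}_{(p,v)}$ denotes the subspace of $F\mathcal{L}(\Lambda,(M,\xi))$ consisting of those $(\gamma,F_s)$ with $\gamma(0)=p$ and $F_1(0)=v$. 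The argument parallels that of \cite{FMP22}: a formal $\F$-embedding of $\D^2$ sweeps out a loop of long formal Legendrians, and conversely a loop of long formal Legendrians assembles into a formal $\F$-embedding. Under these identifications, the inclusion~(\ref{eq:MainInclusion}) is identified with the loop of the fiber inclusion $\mathcal{L}_{(p,v)}\hookrightarrow F\mathcal{L}_{(p,v)}$.

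By Theorem~\ref{thm:principal} the inclusion $\Emb\hookrightarrow\FEmb$ is a $1$-equivalence (a $\pi_0$-isomorphism together with the vanishing of the first relative homotopy group). Under the loop-space identification above this translates into the isomorphism
\[ \pi_1(\mathcal{L}_{(p,v)}(\Lambda,(M,\xi)))\xrightarrow{\;\cong\;}\pi_1(F\mathcal{L}_{(p,v)}(\Lambda,(M,\xi))). \]

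To upgrade this to an isomorphism on the full Legendrian spaces, I would compare the two evaluation Serre fibrations
\[ \mathcal{L}_{(p,v)}\hookrightarrow \mathcal{L}(\Lambda)\xrightarrow{\ev}\xi\setminus 0,\qquad F\mathcal{L}_{(p,v)}\hookrightarrow F\mathcal{L}(\Lambda)\xrightarrow{\ev}\xi\setminus 0, \]
with $\ev(\gamma)=(\gamma(0),\gamma'(0))$ on the genuine side and $\ev(\gamma,F_s)=(\gamma(0),F_1(0))$ on the formal side; both land in the common base $\xi\setminus 0$. The inclusion of Legendrians into formal Legendrians respects both fibration structures, yielding a commutative ladder of long exact sequences of homotopy groups with the identity map on the base. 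A $5$-lemma diagram chase, using the fiber $\pi_1$-isomorphism just established, then produces the desired isomorphism $\pi_1(\mathcal{L}(\Lambda))\cong \pi_1(F\mathcal{L}(\Lambda))$.

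The main obstacle I anticipate is at this last step: the $5$-lemma also demands the $\pi_0$-injectivity of the fiber inclusion $\mathcal{L}_{(p,v)}\hookrightarrow F\mathcal{L}_{(p,v)}$, which is not a formal consequence of Theorem~\ref{thm:principal} alone. Since both $\pi_0$'s identify via the LES with quotients of $\pi_1(\xi\setminus 0)$ by the images of $\ev_*$, this injectivity reduces to showing that every homotopy class in $\pi_1(\xi\setminus 0)$ realized by a formal Legendrian loop is already realized by a genuine one. The Darboux-ball hypothesis is precisely what makes this accessible: one may parallel-transport $\D^2$ (and hence $\Lambda$) along arbitrary loops in $M$ by compactly supported contact isotopies, and one may rotate $\Lambda$ within its contact plane, producing genuine Legendrian loops realizing all relevant basepoint loops in $\xi\setminus 0$.
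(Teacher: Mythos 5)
Your proposal is correct and follows essentially the same route as the paper: identify $\Emb(\D^2,\F,(M,\xi),\rel\partial)$ and $\FEmb(\D^2,\F,(M,\xi),\rel\partial)$ with the based loop spaces of $\mathcal{L}_{(p,v)}$ and $F\mathcal{L}_{(p,v)}$ (the paper does this via the fibrations over the point-relative embedding spaces, which are contractible), apply Theorem \ref{thm:principal} to get the $\pi_1$-isomorphism on the fibers, and then run the five lemma on the ladder of evaluation fibrations over $\CFra(M,\xi)\simeq\xi\setminus 0$. Your extra observation about the $\pi_0$-term of the fibers is a legitimate point that the paper's one-line invocation of the Five Lemma leaves implicit, and your resolution (realizing base loops by genuine Legendrian loops via ambient contact transport and Darboux-ball rotations) is the right way to close it.
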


In the case in which the underlying $3$-manifold is the $3$-sphere we precisely describe the fundamental group of these Legendrian embedding spaces.
\begin{corollary}\label{cor:UnknotsS3}
Let $\Lambda \subseteq (\NS^3,\xi)$ be an embedded Legendrian unknot is some overtwisted contact $3$-sphere that lies in some Darboux ball. Then, there is an isomorphism 
$$ \pi_1 ( \mathcal{L}(\Lambda,(\NS^3,\xi)) \cong \Z \oplus \Z, $$
in which one $\Z$-factor is generated by a full-rotation of $\Lambda$ in a Darboux ball, and the other $\Z$-factor is generated by a loop of long Legendrian unknots which are contractible as loops of smooth long unknots.  
\end{corollary}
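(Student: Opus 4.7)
The plan is to combine the basepoint evaluation fibration on parametrized Legendrians with the loop-space adjunction of \cite{FMP22} and Theorem~\ref{thm:principal}, reducing the problem to a purely homotopy-theoretic computation.

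Consider first the evaluation fibration
\[
\mathcal{L}_{(p,v)}(\Lambda,(\NS^3,\xi)) \longrightarrow \mathcal{L}(\Lambda,(\NS^3,\xi)) \xrightarrow{\ev} S(\xi),
\]
where $S(\xi)\to\NS^3$ denotes the unit circle bundle of $\xi$. Since $H^2(\NS^3;\Z)=0$, the bundle $S(\xi)$ is trivial, so $S(\xi)\cong \NS^3\times \NS^1$; in particular $\pi_1 S(\xi)=\Z$, generated by the loop that rotates $v$ inside $\xi_p$, while $\pi_2 S(\xi)=0$. The full-rotation loop of $\Lambda$ inside a Darboux ball defines an explicit loop in $\mathcal{L}$ whose image under $\ev$ realizes this generator. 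Combined with the vanishing of $\pi_2 S(\xi)$, the long exact sequence of the fibration splits and yields
\[
\pi_1\mathcal{L}(\Lambda,(\NS^3,\xi)) \cong \pi_1\mathcal{L}_{(p,v)}(\Lambda,(\NS^3,\xi)) \oplus \Z,
\]
identifying the first $\Z$-summand of the statement.

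For the second summand, I would fix a convex disk $\D^2\subset (\D^3,\xi_{\std})\subset (\NS^3,\xi)$ with $\partial\D^2=\Lambda$ and tight characteristic foliation $\F$. The weak equivalence $\Emb(\D^2,\F,(\NS^3,\xi),\rel\partial)\simeq \Omega\,\mathcal{L}_{(p,v)}(\Lambda,(\NS^3,\xi))$ of \cite{FMP22} combined with Theorem~\ref{thm:principal} (applicable because $\F$ is tight) yields
\[
\pi_1\mathcal{L}_{(p,v)} \;\cong\; \pi_0\Emb(\D^2,\F,(\NS^3,\xi),\rel\partial) \;\cong\; \pi_0\FEmb(\D^2,\F,(\NS^3,\xi),\rel\partial).
\]
The right-hand side is now purely homotopy-theoretic. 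All smooth embeddings of $\D^2$ into $\NS^3$ with common boundary $\Lambda$ are isotopic rel boundary, and over a fixed such embedding the space of admissible endpoint plane fields (those $P_1$ with $P_1\cap\xi=T\F$) is contractible. Hence $\pi_0\FEmb$ reduces to $\pi_1$ of the relevant component of the space of cooriented plane fields on $(\D^2,\partial\D^2)$ rel boundary. Using the identification $\D^2/\partial\D^2=\NS^2$, this space is $\operatorname{Map}_*(\NS^2,\NS^2)$, a cogroup-induced $H$-space whose components all have $\pi_1=\pi_3\NS^2=\Z$, generated by the Hopf class.

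The main obstacle will be the final geometric identification: tracing the Hopf generator back through the adjunction of \cite{FMP22} to exhibit it as a loop of long Legendrian unknots whose underlying loop in the space $\Emb_{\operatorname{long}}(\NS^1,\NS^3)_{\operatorname{unknot}}$ of smooth long unknots is null-homotopic. The key observation is that the Hopf generator is produced entirely inside the formal plane-field factor of $\FEmb$, independently of the smooth embedding, so its image under the forgetful map to smooth long unknots is automatically trivial. Assembling the two summands then gives $\pi_1\mathcal{L}(\Lambda,(\NS^3,\xi))\cong \Z\oplus\Z$ with the geometric generators described in the statement.
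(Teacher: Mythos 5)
Your overall route is the same as the paper's: the evaluation fibration $\mathcal{L}_{(p,v)}(\Lambda,(\NS^3,\xi))\to\mathcal{L}(\Lambda,(\NS^3,\xi))\to\CFra(\NS^3,\xi)$ (your $S(\xi)$ is the same trivial circle bundle), the loop-space identification $\Omega\mathcal{L}_{(p,v)}\simeq\Emb(\D^2,\F,(\NS^3,\xi),\rel\partial)$ from \cite{FMP22}, Theorem \ref{thm:principal}, and a homotopy-theoretic computation showing $\pi_0\FEmb(\D^2,\F,(\NS^3,\xi),\rel\partial)\cong\Z$ (the paper gets this as $\pi_3(\SO(3))$ via the fibration over the contractible $\Emb(\D^2,\NS^3,\rel\partial)$; your $\pi_3(\NS^2)$ version of the fiber computation is equivalent in spirit, though you should phrase it in terms of paths of bundle isomorphisms rather than only endpoint plane fields, as in Corollary \ref{cor:DisksS3}).

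There is, however, a genuine gap at the step where you write that the long exact sequence ``splits and yields $\pi_1\mathcal{L}\cong\pi_1\mathcal{L}_{(p,v)}\oplus\Z$.'' The vanishing of $\pi_2 S(\xi)$ and the existence of a lift of the generator only give a \emph{split} short exact sequence
$0\to\Z\to\pi_1\mathcal{L}(\Lambda,(\NS^3,\xi))\to\Z\to 0$,
and a split extension of $\Z$ by $\Z$ is a semidirect product $\Z\rtimes\Z$; since $\operatorname{Aut}(\Z)=\{\pm\Id\}$ this could a priori be the non-abelian Klein-bottle group $\Z\rtimes_{-1}\Z$. Ruling this out is precisely the content of the last paragraph of the paper's proof: one must show that the full-rotation loop $A$ and the long-Legendrian loop $B$ commute in $\pi_1\mathcal{L}$. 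The paper does this by using Lemma \ref{lem:Rotations} to realize $A$ as $\varphi^t(\Lambda)$ for a loop of ambient contactomorphisms $\varphi^t$, arranging $\varphi^t=\Id$ for $t\le 1/2$ and $B=\Lambda^t$ constant for $t\ge 1/2$, so that $A*B$ is represented by $\varphi^t(\Lambda^t)$ and an interleaving (Eckmann--Hilton type) argument gives $A*B=B*A$. Your proposal needs this commutation argument, or some substitute for it, before the direct-sum conclusion can be drawn.
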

\begin{remark}
    The situation stands in sharp contrast with the one for the tight contact $3$-sphere described in \cite{FMP22}. For instance, there does not exists a non-contractible loop of long Legendrian unknots with $\tb=-1$ in the standard tight $3$-sphere. 
\end{remark}

\subsection{The contact mapping class group of a Legendrian unknot with negative $\tb$ in an overtwisted $3$-sphere}

Denote by $\mathcal{C}_{\ot}(M)$ the space of overtwisted contact structures on a given $3$-manifold $M$. Recall that, as a consequence of Eliashberg's $h$-principle, the
isotopy classes of overtwisted contact structures on $M$ are in bijection with the homotopy classes of plane fields on $M$ \cite{EliashbergOT}. To label them in $\NS^3$ we fix the trivialization of $T\NS^3$ given by the quaternions. Then, every contact structure $\xi$ on $\NS^3$ defines a Gauss map $f_{\xi}:\NS^3\rightarrow \NS^2$, yielding to a map 
$$ \mathcal{C}_{\ot}(\NS^3)\rightarrow \operatorname{Maps}(\NS^3,\NS^2)$$ that induces an isomorphism at the level of path-connected components. In particular, $\pi_0(\mathcal{C}_{\ot}(\NS^3))\cong \pi_3(\NS^2)\cong \Z$, where the isomorphism is given by associating to each overtwisted contact structure $\xi$ the Hopf invariant $h(f_\xi)\in \Z$ of its associated Gauss map. We fix a list of representatives $\{ \xi_k: k\in \Z, h(f_{\xi_k})=k\} \subseteq \mathcal{C}_{\ot}(\NS^3)$ of these path-connected components. 

Finally, given a Legendrian $\Lambda\subseteq (\NS^3,\xi)$ we denote by $(\Op(\Lambda),\xi):=(\Op(\Lambda),\xi_{|\Op(\Lambda)})\subseteq (\NS^3,\xi)$ a standard neighborhood of $\Lambda$, which is unique up to contact isotopy fixing $\Lambda$. The Legendrian complement of $\Lambda$ is $(C(\Lambda),\xi):=(\NS^3\backslash \Op(\Lambda),\xi_{|\NS^3\backslash \Op(\Lambda)})$. 

Recall that a \em formal contactomorphism \em of a contact manifold $(M,\xi)$ is a pair $(\varphi,F_s)$, where $\varphi\in \Diff(M)$ and $F_s\in \Iso(TM,\varphi^*TM)$, $s\in [0,1]$, is a homotopy of bundle isomorphisms such that $F_0=d\varphi$ and $F_1(\xi)=\varphi^*\xi$. The group of formal contactomorphisms is denoted by $\FCont(M,\xi)$. A contactomorphism is a formal contactomorphism $(\varphi,F_s)$ such that $F_s\equiv d \varphi$. We will denote the group of contactomorphisms by $\Cont(M,\xi)\subseteq \FCont(M,\xi)$ We will always assume that our formal contactomorphisms are the indentity near the possibly empty boundary $\partial M$. 

Combining the Theorem \ref{thm:principal} with Chekanov and Vogel theorem \cite{VogelOvertwisted} about the contact mapping class group of an overtwisted sphere we conclude that

\begin{corollary}\label{cor:pi0complementounknot}
    Let $\xi\in \mathcal{C}_{\ot}(\NS^3)$ be some overtwisted contact structure on $\NS^3$ and $\Lambda\subseteq (\NS^3,\xi)$ a Legendrian unknot with $\tb(\Lambda)\leq 0$. Consider the inclusion \begin{equation}\label{eq:InclusionCont}
    i:\Cont(C(\Lambda),\xi)\hookrightarrow \FCont(C(\Lambda),\xi).
    \end{equation}
    Then, 
    \begin{itemize}
    \item [(a)] If $\tb(\Lambda)+|\rot(\Lambda)|> -1$ then the induced map $\pi_0(i)$ is an isomorphism. In particular, we have that $\pi_0(\Cont(C(\Lambda),\xi))\cong \Z\oplus \Z_2$.
    \item [(b)] If $\tb(\Lambda)+|\rot(\Lambda)|\leq -1$ then 
    \begin{itemize}
        \item [(i)] If $\xi\not\cong \xi_{-1}$ then the induced map $\pi_0(i)$ is an isomorphism. In particular, $\pi_0(\Cont(C(\Lambda),\xi))\cong \Z\oplus \Z_2$.
        \item [(ii)] If $\xi\cong \xi_{-1}$ then the induced map $\pi_0(i)$ is surjective but has order $2$ kernel. In particular, $\pi_0(\Cont(C(\Lambda),\xi))\cong \Z\oplus \Z_2\oplus \Z_2$.
    \end{itemize}
    \end{itemize}
\end{corollary}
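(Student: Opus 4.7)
The plan is to compare the genuine and formal mapping class groups of $C(\Lambda)$ via a pair of evaluation fibrations and a diagram chase. Fixing a parametrization $i:\NS^1\hookrightarrow\Lambda$, the evaluation map $\varphi\mapsto \varphi\circ i$ organizes into a Serre fibration
$$\Cont(C(\Lambda),\xi)\longrightarrow\Cont(\NS^3,\xi)\longrightarrow\mathcal{L}_{(p,v)}(\Lambda,(\NS^3,\xi)),$$
the fiber being identified with $\Cont(C(\Lambda),\xi)$ via contractibility of the group of parametrized contactomorphisms of a standard Legendrian tubular neighborhood fixing $\Lambda$ pointwise. The same construction applied to formal objects yields the analogous fibration with $\FCont$ and $F\mathcal{L}_{(p,v)}$, and the forgetful maps assemble these into a commutative ladder of long exact sequences of homotopy groups.

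The key inputs entering the diagram chase are the following. First, Corollary \ref{cor:unknots}, combined with the naturality of the smooth basepoint fibration $\mathcal{L}_{(p,v)}\to\mathcal{L}$ (whose fibers are smooth, hence contain no contact obstruction), implies that $\pi_1(\mathcal{L}_{(p,v)})\to\pi_1(F\mathcal{L}_{(p,v)})$ is an isomorphism. Second, Chekanov-Vogel's theorem \cite{VogelOvertwisted} asserts that the forgetful map $\pi_0(\Cont(\NS^3,\xi))\to\pi_0(\FCont(\NS^3,\xi))$ is an isomorphism whenever $\xi\not\cong\xi_{-1}$, while for $\xi\cong\xi_{-1}$ it is surjective with an order-$2$ kernel generated by the Chekanov-Vogel contactomorphism. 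Third, a direct smooth/formal computation on the solid-torus $C(\Lambda)$, using obstruction theory for plane fields together with the $h$-principle for formal contact structures, yields $\pi_0(\FCont(C(\Lambda),\xi))\cong \Z\oplus\Z_2$ independently of the contact structure. A standard five-lemma chase in the ladder then reduces the determination of $\pi_0(i)$ to whether the Chekanov-Vogel kernel survives restriction from $\NS^3$ to $C(\Lambda)$.

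The case analysis proceeds as follows. In case (b)(i) there is no kernel to begin with, so $\pi_0(i)$ is automatically an isomorphism. In case (a), the hypothesis $\tb(\Lambda)+|\rot(\Lambda)|>-1$ is precisely the one ensuring that every convex Seifert disk of $\Lambda$ satisfies the assumptions of Theorem \ref{thm:principal}; this allows any formal contactomorphism of $C(\Lambda)$ lifting to the Chekanov-Vogel class to be trivialized via a genuine contact isotopy of such a Seifert disk, annihilating the kernel. In case (b)(ii), Theorem \ref{thm:principal} does not apply, and the hypothesis $\tb(\Lambda)\leq 0$ together with looseness allows a representative of the Chekanov-Vogel twist to be arranged with support disjoint from a Darboux ball containing $\Lambda$, so the kernel descends non-trivially to $\pi_0(\Cont(C(\Lambda),\xi))$ and contributes the extra $\Z_2$ summand.

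The principal technical obstacle is case (a): verifying that the Chekanov-Vogel obstruction is annihilated upon passage to the complement when $\tb(\Lambda)+|\rot(\Lambda)|>-1$. This step crucially uses the $1$-parametric content of Theorem \ref{thm:principal}, namely the vanishing of the relative $\pi_1$, which produces the isotopy of convex Seifert disks required to interpolate between a formal and a genuine contact isotopy, and hence transports the Chekanov-Vogel representative to the identity through contactomorphisms supported in $C(\Lambda)$.
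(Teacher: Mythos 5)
Your overall strategy---comparing the genuine and formal evaluation fibrations over the space of long Legendrian unknots and chasing the resulting ladder---is genuinely different from the paper's, which in case (a) fibers $\Cont(\NS^3,\xi,\rel\Lambda)$ over $\Emb(\D^2,\F,(\NS^3,\xi),\rel\partial)$ with fiber $\Cont(\NS^3,\xi,\rel\D^2)$, and in case (b) uses the contractibility of $\Emb(\D^2,\F,(\NS^3,\xi),\rel p)$ for tight $\F$ to split $\Cont(\NS^3,\xi,\rel\Lambda)\simeq\Cont(\NS^3,\xi,\rel p)\times\Emb(\D^2,\F,(\NS^3,\xi),\rel\partial)$. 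However, your route has a genuine gap at its central step. The ``standard five-lemma chase'' in the ladder
$$\pi_1(\Cont(\NS^3,\xi))\to\pi_1(\mathcal{L}_{(p,v)})\to\pi_0(\Cont(C(\Lambda),\xi))\to\pi_0(\Cont(\NS^3,\xi))$$
requires, in order to control $\ker\pi_0(i)$ on the image of the connecting homomorphism, that $\pi_1(\Cont(\NS^3,\xi))\to\pi_1(\FCont(\NS^3,\xi))$ be surjective (at least onto the classes mapping into $\pi_1(F\mathcal{L}_{(p,v)})$). Chekanov--Vogel is purely a $\pi_0$-statement, and $\pi_1(\Cont(\NS^3,\xi))$ for a general overtwisted $\xi$ is not known; this is precisely the kind of higher-parametric information the paper's own conjectures leave open. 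Concretely, a loop of formal Legendrian unknots extending to a loop of formal contactomorphisms of $\NS^3$ might, as a genuine loop of Legendrians, fail to extend to a loop of contactomorphisms, producing an element of $\ker\pi_0(i)$ that your chase cannot exclude. The paper never compares $\pi_1$ of the two contactomorphism groups of the sphere: in case (a) it proves surjectivity in homotopy of $\Cont(\NS^3,\xi,\rel\Lambda)\to\Emb(\D^2,\F,(\NS^3,\xi),\rel\partial)$ directly via Eliashberg's $h$-principle (using that $\Op(\D^2)$ is overtwisted when $\tb+|\rot|>-1$), and in case (b) the product splitting reduces everything to Corollary \ref{cor:DisksS3} and Theorem \ref{thm:ContactomorphismS3}.

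Two further points. The claim $\pi_0(\FCont(C(\Lambda),\xi))\cong\Z\oplus\Z_2$ is asserted as a ``direct'' obstruction-theoretic computation on the solid torus but never carried out; it does real work in your argument and needs a proof (in the paper the $\Z$ arises as $\pi_0(\FEmb(\D^2,\F,(\NS^3,\xi),\rel\partial))\cong\pi_3(\SO(3))$ and the $\Z_2$ as $\pi_4(\NS^2)$, both resting on Hatcher's theorems). Moreover, the mechanism you describe in case (a) is off: the Chekanov--Vogel class is a non-trivial element of $\pi_0(\Cont(\NS^3,\xi_{-1}))$ and is not ``trivialized''; when $\tb(\Lambda)+|\rot(\Lambda)|>-1$ its lift to $C(\Lambda)$ becomes \emph{formally non-trivial} in $\FCont(C(\Lambda),\xi)$ (landing in the kernel of the map to $\pi_0(\FCont(\NS^3,\xi))$), which is why no extra kernel appears. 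Symmetrically, in case (b)(ii) you still owe an argument that your supported-away representative is formally trivial \emph{relative to} $\Op(\Lambda)$ and that it contributes exactly one extra $\Z_2$ and no more; the paper obtains both from the product splitting together with $\pi_0(\Cont(\NS^3,\xi_{-1},\rel p))\cong\Z_2\oplus\Z_2$.
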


\begin{remark}
   Case (a) in the Theorem above also follows from the work \cite{F-SOT} of the second author, in which is shown that the inclusion $i$ is a weak homotopy equivalence in this case. Our proof is independent of this.  
\end{remark}

\begin{remark}
    The ``exotic'' $\Z_2$-factor appearing in case (b)(ii) is the one found by Chekanov and Vogel \cite{VogelOvertwisted}. The fact that this exotic factor should appear is obvious since every contactomorphism of a connected contact $3$-manifold can be assumed to fix a Darboux ball and the Legendrian unknot $\Lambda$ lies in some Darboux ball by the assumption (b). The actual content of (b)(ii) is that this is the only exotic factor that appears in the complement of such a Legendrian. 
\end{remark}

\begin{remark}
    To the best of our knowledge, the examples in case (b) are the only overtwisted contact $3$-manifolds, besides the $3$-spheres treated in \cite{VogelOvertwisted}, that are not strongly overtwisted, for which there is a description of its contact mapping class group.
\end{remark}

\subsection{Further questions}

As mentioned above, our proof of Theorem \ref{thm:principal} relies on the construction of certain isotopies of overtwisted disks to enable the application of the overtwisted $h$-principle from \cite{EliashbergOT}. This is achieved by employing Colin's discretization trick \cite{Colin}, which ultimately relies on Giroux's genericity theorem \cite{GirouxConvexity,Honda}. However, since the latter does not extend to higher-dimensional families of embeddings, it remains unclear how to adapt our argument to prove Theorem \ref{thm:principal} for higher homotopy groups. Nevertheless, we believe this to be feasible:

\begin{conjecture}\label{conjectureDisk}
Assume the hypotheses of Theorem \ref{thm:principal}. Then, the inclusion (\ref{eq:MainInclusion}) is a weak homotopy equivalence.
\end{conjecture}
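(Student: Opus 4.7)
The plan is to parametrize every step of the proof of Theorem \ref{thm:principal}. To kill $\pi_k$ of the pair \eqref{eq:MainInclusion} for every $k\geq 1$, one starts with a formal $\F$-isotopy parametrized by $\NS^k$ that extends a genuine $\F$-isotopy over the boundary, and one must construct a genuine filling over $\D^{k+1}$. Following the scheme of Theorem \ref{thm:IsotopyOvertwistedDisks}, the decisive step is to produce a family of isotopies of overtwisted disks, parametrized by $\D^{k+1}\times [0,1]$, that stay disjoint from the corresponding family of $\F$-embeddings. Once such a family is in hand, a parametric enhancement of Corollary \ref{cor:DisksWithOTDisk}, which is a routine parametric application of Eliashberg's h-principle to contactomorphisms preserving an auxiliary overtwisted disk, would deliver the desired genuine $\F$-isotopy.

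The essential new input is a parametric replacement for Colin's discretization, which in the argument of Theorem \ref{thm:principal} rests on Giroux's genericity for a single convex disk. I would appeal to the parametric convex hypersurface theory of Honda and Huang, together with a relative version adapted to convex surfaces with Legendrian boundary, to guarantee that a generic $k$-parameter family of embedded disks is convex away from a codimension-one stratum whose non-convex models (bypass births and deaths, retrogradient flow lines) are explicitly understood. Stratifying $\D^{k+1}\times[0,1]$ accordingly, one treats each top-dimensional cell with the discrete argument of Theorem \ref{thm:IsotopyOvertwistedDisks}: tight pinched $3$-balls are controlled by the weak contractibility of the space of tight fillings with fixed convex boundary, a parametric form of Eliashberg's uniqueness theorem for tight $3$-balls; overtwisted pinched $3$-balls are handled by a parametric upgrade of Theorem \ref{thm:Huang-hopf}, which tracks the Hopf-invariant discrepancy coherently across the family. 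Lower-dimensional strata would be glued back in using standard relative parametric techniques.

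The hard part is almost certainly the parametric convex-surface input in the presence of the fixed characteristic foliation $\F$. Pinning down $\F$ in a neighborhood of $\partial \D^2$ imposes boundary rigidity on the family of embeddings, and the usual genericity perturbations are not obviously compatible with this constraint; adapting the Honda–Huang stratification to this relative $\F$-preserving setting is where the bulk of the technical work lies. A secondary difficulty is the obstruction-theoretic bookkeeping: Theorem \ref{thm:Huang-hopf} accounts for a single bypass, whereas in a $k$-parameter family one must show that the Hopf invariants of the varying overtwisted disks match up across all strata and assemble into a section of a locally constant sheaf over $\D^{k+1}$ that vanishes relative to the boundary. Once these two technical extensions are in place, the case split into tight and overtwisted characteristic foliations and the rest of the proof of Theorem \ref{thm:principal} should propagate through in families and yield Conjecture \ref{conjectureDisk}.
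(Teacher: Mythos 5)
The statement you are addressing is not proved in the paper: it is stated as Conjecture \ref{conjectureDisk}, and the authors explicitly explain why their method stops at $\pi_0$ and $\pi_1$ of the pair --- Giroux genericity, and hence the Colin--Honda isotopy discretization underlying Theorem \ref{thm:IsotopyOvertwistedDisks}, is not available for families of dimension greater than one. So there is no proof in the paper to compare yours against, and the real question is whether your text closes that gap. It does not: it is a program, not a proof. Every step where the actual difficulty lives is asserted rather than established. The appeal to a ``parametric convex hypersurface theory of Honda and Huang, together with a relative version adapted to convex surfaces with Legendrian boundary'' is precisely the missing input the authors identify; you acknowledge yourself that adapting it to the $\F$-preserving relative setting is ``where the bulk of the technical work lies,'' which is another way of saying the key lemma is unproved. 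The same goes for the ``parametric upgrade of Theorem \ref{thm:Huang-hopf}'' and the claim that the Hopf-invariant discrepancies ``assemble into a section of a locally constant sheaf'': no construction or argument is given, and this bookkeeping is exactly where a proof could fail.

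Two more concrete points you should confront if you want to turn this into a proof. First, the dichotomy driving the paper's argument --- Proposition \ref{prop:TightIsotopy} for tight pinched $3$-disks versus Proposition \ref{prop:OvertwistedIsotopy} for overtwisted ones --- is not locally constant in a parameter: in a $k$-parameter family the tight/overtwisted character of the pinched regions can change across the parameter space, and the interpolation between the two regimes (already the delicate interface in the $1$-parameter case, via Lemma \ref{lema:FixingDividingSet} and the bypass-triangle corrections) must be made coherent over all strata simultaneously. Remark \ref{rmk:OTDisks} shows that even in one parameter the interpolation of overtwisted disks can obstruct when hypotheses are relaxed; in higher parameters, gluing the overtwisted-disk families constructed cell by cell requires higher-connectivity statements about spaces of overtwisted disks in complements (not just the $\pi_0$-statement that two disjoint overtwisted disks are contact isotopic), and you give no mechanism for this. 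Second, your reduction ``once such a family is in hand, a parametric enhancement of Corollary \ref{cor:DisksWithOTDisk} delivers the result'' is fine --- that corollary is already a weak homotopy equivalence, so this part genuinely does work in all degrees --- but it only highlights that everything rests on the unconstructed family of overtwisted disks over $\D^{k+1}\times[0,1]$. In short, your proposal correctly diagnoses the obstruction and matches the authors' own expectations for how the conjecture should eventually be proved, but it does not constitute a proof of the conjecture.
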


Similarly, for Legendrians, we propose the following:

\begin{conjecture}\label{conjectureLegendrians}
Let $\Lambda \subseteq (M,\xi)$ be an embedded Legendrian in an overtwisted contact $3$-manifold. Assume that $\Lambda$ lies in some Darboux ball. Then, the inclusion (\ref{eq:InclusionLegendrians}) is a weak homotopy equivalence.
\end{conjecture}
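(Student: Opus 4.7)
The plan is to reduce Conjecture \ref{conjectureLegendrians} to the parametric $h$-principle for convex disks asserted in Conjecture \ref{conjectureDisk}, via the loop-space identification of \cite{FMP22}. Choose a convex disk $\D^2 \subseteq (M,\xi)$ with tight characteristic foliation $\F$ contained in a Darboux ball around $\Lambda$, so that $\partial \D^2 = \Lambda$. Then \cite{FMP22} gives a weak homotopy equivalence
$$\Emb(\D^2,\F,(M,\xi),\rel \partial) \simeq \Omega_{i}\, \mathcal{L}_{(p,v)}(\Lambda,(M,\xi)),$$
with $(p,v) = (i(0), i'(0))$, obtained by viewing a convex disk with prescribed characteristic foliation as a based loop of long Legendrian radii. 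The idea is to fit this into a commutative square comparing genuine and formal objects on both sides, and to descend a disk-level weak equivalence to the Legendrian level.

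The first step is to establish a formal counterpart of the \cite{FMP22} equivalence, namely
$$\FEmb(\D^2,\F,(M,\xi),\rel \partial) \simeq \Omega_{i}\, F\mathcal{L}_{(p,v)}(\Lambda,(M,\xi)),$$
together with a commutative square whose vertical maps are the tautological inclusions of genuine data into formal data. The construction mimics \cite{FMP22}: a formal $\F$-embedding $(e,P_s)$ restricts over each radius to a formal long Legendrian based at $(p,v)$, and the radii assemble into a based loop. Carrying this construction through in families, both for the equivalence and for its homotopy inverse, yields the formal equivalence.

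Assuming Conjecture \ref{conjectureDisk}, the left vertical map of the resulting square is a weak equivalence, hence so is the right one $\Omega_i \mathcal{L}_{(p,v)}(\Lambda) \to \Omega_i F\mathcal{L}_{(p,v)}(\Lambda)$. Since both $\mathcal{L}_{(p,v)}(\Lambda)$ and $F\mathcal{L}_{(p,v)}(\Lambda)$ are by definition the path-components of the parametrization $i$, a loop-space weak equivalence descends to a weak equivalence on the spaces themselves, giving that $\mathcal{L}_{(p,v)}(\Lambda,(M,\xi)) \hookrightarrow F\mathcal{L}_{(p,v)}(\Lambda,(M,\xi))$ is a weak homotopy equivalence. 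To pass to based-free Legendrian spaces, I would use the Serre fibrations produced by evaluating the $1$-jet at $0 \in \NS^1$: both $\mathcal{L}(\Lambda,(M,\xi))$ and $F\mathcal{L}(\Lambda,(M,\xi))$ evaluate to the same base space of pointed Legendrian $1$-jets in $\xi$ (since a formal Legendrian has a genuine underlying $1$-jet there), with respective fibers $\mathcal{L}_{(p,v)}$ and $F\mathcal{L}_{(p,v)}$. A five-lemma argument on the ladder of long exact homotopy sequences yields the desired weak equivalence $\mathcal{L}(\Lambda,(M,\xi)) \hookrightarrow F\mathcal{L}(\Lambda,(M,\xi))$.

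The principal obstacle is, of course, proving Conjecture \ref{conjectureDisk} itself. The proof of Theorem \ref{thm:principal} proceeds via Colin's discretization trick and via the construction of a $1$-parameter family of overtwisted disks disjoint from a given isotopy of disks, both of which rest on Giroux's genericity theorem for convex surfaces — fundamentally a $1$-parameter phenomenon. In higher-dimensional families of disk embeddings one encounters codimension $\geq 2$ bifurcations of characteristic foliations (retrograde saddle connections, degenerate singularities) that are not controlled by the present techniques. A plausible route is to combine a parametric version of the overtwisted $h$-principle with varying overtwisted disk, a parametric extension of Huang's bypass-triangle analysis \cite{Huang} to absorb Hopf-invariant obstructions in every dimension, and a substitute for Giroux genericity valid in higher-dimensional families of convex surfaces; this last piece, controlling degenerations of characteristic foliations under many-parameter deformations, is where I would expect the principal technical difficulty to lie.
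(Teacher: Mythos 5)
There is no proof of this statement in the paper to compare against: Conjecture \ref{conjectureLegendrians} is stated as an open problem, and what you have written is not a proof of it either, but a conditional reduction to Conjecture \ref{conjectureDisk}, which you yourself identify as the principal obstacle. The reduction itself is essentially the paper's own argument for Corollary \ref{cor:unknots}: the commutative square relating $\Emb(\D^2,\F,(M,\xi),\rel\partial)$ and $\FEmb(\D^2,\F,(M,\xi),\rel\partial)$ to the based loop spaces of $\mathcal{L}_{(p,v)}$ and $F\mathcal{L}_{(p,v)}$ (via contractibility of the spaces of disks fixed only near a point $p\in\Lambda$), followed by the $1$-jet evaluation fibrations and the five lemma, is exactly what is carried out there at the level of $\pi_1$; promoting it to all homotopy groups is routine once the disk-level statement is granted. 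So the genuinely new content of your proposal is nil on the reduction side, and on the hard side (Giroux genericity, parametric bypass-triangle/Hopf-invariant bookkeeping, parametric families of overtwisted disks in higher-dimensional families) you only sketch where the difficulties lie, which is the same obstruction the paper already records in its discussion of further questions.

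There is also a concrete gap in the reduction itself, independent of Conjecture \ref{conjectureDisk}. Conjecture \ref{conjectureLegendrians} concerns an \emph{arbitrary} embedded Legendrian $\Lambda$ lying in a Darboux ball, not only Legendrian unknots. Your very first step, ``choose a convex disk $\D^2$ with tight characteristic foliation and $\partial\D^2=\Lambda$,'' is impossible unless $\Lambda$ is smoothly unknotted: a Legendrian trefoil inside a Darboux ball bounds no embedded disk at all, so the \cite{FMP22} loop-space identification with spaces of convex disks is simply unavailable, and Conjecture \ref{conjectureDisk} (which is a statement about disks with the hypotheses of Theorem \ref{thm:principal}) says nothing about this case. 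To cover the full conjecture you would need, at minimum, an analogue of the disk $h$-principle for convex Seifert surfaces of higher genus together with the corresponding loop-space comparison for long Legendrians of arbitrary knot type, and the hypotheses would then involve $\tb+|\rot|$ against $-\chi(\Sigma)$ rather than $-1$; none of this is addressed in your proposal.
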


\begin{remark}
    Notice that Conjecture \ref{conjectureDisk} is known to be true in the cases in which $\tb(\partial \D^2)+|\rot (\partial \D^2)|>-1$ or $(M,\xi)$ is strongly overtwisted by \cite{F-SOT}. Similarly, Conjecture \ref{conjectureLegendrians} is also known to hold when $(M,\xi)$ is strongly overtwisted since in that case $\Lambda$ is strongly loose as defined in \cite{F-SOT}. 
\end{remark}

\subsection{Outline}

The article is organized as follows. In Section \ref{sec:Preliminares}, we provide a brief review of convex surface theory. Section \ref{sec:Overtwisted-h-principles} reviews Eliashberg's $h$-principle from \cite{EliashbergOT} and a variation of it with non-fixed overtwisted disks, which will be essential throughout the article (Theorem \ref{teo:LooseOTh-principle} and Corollary \ref{cor:DisksWithOTDisk}). Section \ref{sec:Creating-isotopies-of-overtwisted-disks}, the most technical part of the article, is devoted to constructing isotopies of overtwisted disks in the complement of isotopies of smooth disks with fixed boundary (Theorem \ref{thm:IsotopyOvertwistedDisks}). Section \ref{sec:ProofMainTheorem} presents the proof of Theorem \ref{thm:principal}. Finally, in Section \ref{sec:Applications}, we complete the proofs of Corollaries \ref{cor:unknots}, \ref{cor:UnknotsS3}, and \ref{cor:pi0complementounknot}.

\textbf{Acknowledgments:} The authors are grateful to Fabio Gironella for his insightful comments and questions about this work. Part of this project was carried out while GS was visiting EF at the University of Georgia in 2023, and both are thankful for the excellent work environment provided by the UGA math department. EF was partially supported by an AMS-Simons Travel Grant. GS was supported by the research programs PID2021-126124NB-I00, PID 2019-108936GB-C21 and PR27/21-029.

\section{Preliminaries}\label{sec:Preliminares}

In this Section, we briefly review Giroux's convex surface theory \cite{GirouxConvexity, Honda}. We will recall the effect of a bypass triangle on the formal class of a contact structure as described by Huang \cite{Huang}. Finally, we will review the definition of a formal $\mathcal{F}$-embedding from \cite{F-SOT}.

\subsection{Convex Surface Theory}\label{subsec:Giroux}

Let $(M, \xi)$ be a contact $3$-manifold. A vector field $X$ is called a \emph{contact vector field} if its flow preserves the contact structure $\xi$. An embedded surface $\Sigma \subseteq (M, \xi)$ is said to be \emph{convex} if there exists a contact vector field $X$ that is transverse to it.  An embedding $e: \Sigma \rightarrow (M, \xi)$ is termed \emph{convex} if its image $e(\Sigma)$ is a convex surface.
We define a surface $\Sigma$ to be \em tight \em if there exists a neighborhood of $\Sigma$ that is tight. Conversely, we say that a convex surface is \em overtwisted \em if every neighborhood of $\Sigma$ is overtwisted. Similarly, embeddings of surfaces are referred to as tight or overtwisted if their images are tight or overtwisted, respectively.

The intersection $T\Sigma \cap \xi$ defines a one-dimensional, oriented, singular foliation $\F$ on the surface $\Sigma$, referred to as the \emph{characteristic foliation}. This foliation characterizes $\xi$ near $\Sigma$: 

\begin{theorem}[Giroux]\label{teo:giroux-entorno}
    Let $\Sigma_i\subseteq (M_i,\xi_i)$ be an embedded compact surface within a contact $3$–manifold with characteristic foliation $\F_i$, for $i \in \{0,1\}$. Assume there exists a diffeomorphism $\phi: \Sigma_0 \to \Sigma_1$ such that $\phi_* \F_0=\F_1$. Then, there exists a contactomorphism $\Phi: (\Op(\Sigma_0),\xi_0) \to (\Op(\Sigma_1),\xi_1)$ such that $\Phi(\Sigma_0) = \Sigma_1$ and $\Phi_{|\Sigma_0}$ is isotopic to $\phi$ via an isotopy that preserves the characteristic foliation.   
\end{theorem}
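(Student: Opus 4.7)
The plan is to prove this by a relative Moser argument after normalizing both contact forms to coincide along $\Sigma$. First, I would extend $\phi \colon \Sigma_0 \to \Sigma_1$ to a diffeomorphism $\Phi_0 \colon \Op(\Sigma_0) \to \Op(\Sigma_1)$ of tubular neighborhoods via the standard tubular neighborhood theorem. Pulling back $\xi_1$ by $\Phi_0$, we reduce to the following local problem: two contact structures $\xi_0, \xi_1$ defined near a common surface $\Sigma \subseteq M$ induce the same characteristic foliation $\F$, and one wishes to produce a contactomorphism between them restricting to $\Id_\Sigma$ on the nose.

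Next, I would normalize defining forms. Pick $\alpha_i$ with $\ker \alpha_i = \xi_i$. Since $\ker(\alpha_0|_{T\Sigma}) = T\F = \ker(\alpha_1|_{T\Sigma})$, the pullbacks of $\alpha_0$ and $\alpha_1$ to $\Sigma$ differ by a positive factor, and after extending this factor to a neighborhood and rescaling $\alpha_1$, one may assume $\alpha_0|_{T\Sigma} = \alpha_1|_{T\Sigma}$. Working in coordinates $(x,y,z)$ with $\Sigma = \{z=0\}$ and $\partial_z$ transverse to both $\xi_0$ and $\xi_1$, a further rescaling of the $z$-coordinate by a positive function $\lambda(x,y)$ equates the $dz$-components of $\alpha_0$ and $\alpha_1$ along $\Sigma$, achieving the stronger condition that $\alpha_0$ and $\alpha_1$ coincide as sections of $T^*M$ along $\Sigma$.

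Now I would run Moser's trick on the linear family $\alpha_t := (1-t)\alpha_0 + t\alpha_1$, which consists of contact forms on a sufficiently small neighborhood of $\Sigma$ by openness of the contact condition together with $t$-independence of $\alpha_t$ along $\Sigma$. Searching for $X_t \in \ker \alpha_t$ satisfying the Moser equation
\[ \iota_{X_t}\, d\alpha_t + \dot\alpha_t = \mu_t\, \alpha_t, \qquad \mu_t := \iota_{R_t}\dot\alpha_t, \]
with $R_t$ the Reeb vector field of $\alpha_t$, the right-hand side is annihilated by $R_t$ thanks to the choice of $\mu_t$, and $d\alpha_t|_{\ker \alpha_t}$ is a symplectic form, so a unique $X_t$ exists. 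Integrating produces a flow $\Psi_t$ with $\Psi_t^*\alpha_t$ conformal to $\alpha_0$, and $\Phi := \Phi_0 \circ \Psi_1$ is the desired contactomorphism.

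The main obstacle is guaranteeing $X_t|_\Sigma \equiv 0$, which is what forces $\Psi_1$ to fix $\Sigma$ pointwise and hence reduces the isotopy in the conclusion to the constant one. This is precisely where the stronger normalization of the second step enters: because $\dot\alpha_t = \alpha_1 - \alpha_0$ vanishes along $\Sigma$ as a section of $T^*M$, one has $\mu_t|_\Sigma \equiv 0$ and the entire right-hand side of the Moser equation vanishes on $\Sigma$; non-degeneracy of $d\alpha_t|_{\ker \alpha_t}$ then yields $X_t|_\Sigma \equiv 0$ as required.
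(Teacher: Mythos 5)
The paper states this result as a classical theorem of Giroux and does not prove it, so I am comparing your argument with the standard proof from the literature. Your overall architecture --- extend $\phi$ to tubular neighborhoods, normalize the contact forms so that their pullbacks to $\Sigma$ agree, and run Moser's trick on the linear interpolation $\alpha_t$ --- is exactly the standard route, and the Moser step itself (uniqueness of $X_t$ via non-degeneracy of $d\alpha_t$ on $\ker\alpha_t$, contactness of $\alpha_t$ near $\Sigma$, and the implication ``$\dot\alpha_t$ and $\mu_t$ vanish along $\Sigma$ $\Rightarrow$ $X_t|_\Sigma\equiv 0$'') is handled correctly.

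The genuine gap is in your second normalization. A coordinate change $z\mapsto\lambda(x,y)z$ multiplies the $dz$-component $u_1:=\alpha_1(\partial_z)$ along $\Sigma$ by the positive function $\lambda$, so it can achieve $u_0=\lambda u_1$ only if $u_0/u_1$ is everywhere defined and positive. Transversality of $\partial_z$ to $\xi_0$ and $\xi_1$ gives $u_0,u_1\neq 0$ but not equality of signs, and in fact a transversal along which $u_0$ and $u_1$ have the same sign need not exist: at a positive (resp.\ negative) singularity of $\F$ one has $\xi_0(p)=\xi_1(p)=T_p\Sigma$, and the contact condition forces both $u_i(p)$ to be positive (resp.\ negative) for any $\partial_z$ pointing to the positive side of $\Sigma$. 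Hence if $\F$ has singularities of both signs --- the typical situation for the convex disks in this paper --- any candidate $\partial_z$ with $u_0u_1>0$ everywhere would have to switch sides of the connected surface $\Sigma$ and so be tangent to it somewhere. Two repairs are available. (a) Replace the pure rescaling by a shear-plus-rescaling $\chi(x,z)=(x+zV(x)+O(z^2),\,\lambda(x)z+O(z^2))$ with $V$ tangent to $\Sigma$, which turns $u_1$ into $\beta(V)+\lambda u_1$; one solves $\beta(V)+\lambda u_1=u_0$ by taking $\lambda=u_0/u_1$ near the singular set (where the ratio is positive) and using $V$ away from it (where $\beta\neq 0$). (b) Drop the stronger normalization altogether: with only $\iota^*\alpha_0=\iota^*\alpha_1$, the Moser equation gives $d\alpha_t(X_t,e)=0$ for every $e$ tangent to $\F$, so $X_t(p)$ lies in the $d\alpha_t$-orthogonal of $T_p\F$ inside $\ker\alpha_t(p)$, which is $T_p\F$ itself; thus $X_t$ is tangent to the characteristic foliation (and vanishes at its singularities), the flow preserves $\Sigma$ and $\F$, and one obtains exactly the conclusion as stated, with $\Phi|_{\Sigma_0}$ isotopic to $\phi$ through foliation-preserving maps rather than equal to it. With either repair your proof is complete.
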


Assume that $\Sigma \subseteq (M, \xi)$ is convex with associated with a contact vector field $X$. Then, there is a family of embedded transverse curves 
$\Gamma_X := \{p \in \Sigma : X(p) \in \xi_p\}$, known as \em dividing set. \em  The isotopy class of 
$\Gamma_X$ is independent of the choice of $X$, as the space of contact vector fields transverse to $\Sigma$ is contractible. Therefore, we will simply denote it by $\Gamma$.

The convexity of $\Sigma \subseteq (M, \xi)$ is equivalent to the existence of an \emph{$I$-invariant neighborhood} of $\Sigma$, which is contactomorphic to $(\Sigma \times I, \xi_{\mathrm{\inv}} = \ker(f dt + \beta))$, 
$t \in I := [0,1]$, for some $f \in C^\infty(\Sigma)$ and $\beta \in \Omega^1(\Sigma)$. With these coordinates, the dividing set of $\Sigma$ is then determined by $\Gamma = f^{-1}(0)$.

The following result highlights the feasibility of approximating a given surface by an arbitrarily close convex one. 

\begin{theorem}[Giroux, Honda \cite{GirouxConvexity,Honda}]\label{teo:Giroux-Honda}
Let $(M, \xi)$ be a contact manifold and $\Sigma \subset M$ a compact embedded surface with possibly empty Legendrian boundary with non-positive $\tb$. Then, there exists a $C^{\infty}$-small perturbation of $\Sigma$ ($C^0$-small near the boundary), which makes $\Sigma$ convex.
\end{theorem}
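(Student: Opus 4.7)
The plan is to reduce convexity to a combinatorial property of the characteristic foliation, perturb $\Sigma$ to make this property manifest, and then invoke Giroux's criterion to produce the transverse contact vector field.

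First I would recall Giroux's criterion: a compact embedded surface $\Sigma$ is convex if and only if its characteristic foliation $\F$ is \emph{divided}, meaning that there exists an embedded multicurve $\Gamma \subseteq \Sigma \setminus \partial \Sigma$ disjoint from the singularities of $\F$, transverse to $\F$, separating $\Sigma$ into two pieces $R_{\pm}$, and admitting an area form $\omega$ and a function $u$ with $u|_{R_\pm} \gtrless 0$, $\Gamma = u^{-1}(0)$, such that the vector field directing $\F$ has divergence with the sign of $u$. Given such data, the standard $I$-invariant model $\xi = \ker(u\, dt + \beta)$ on $\Sigma \times I$ provides the transverse contact vector field $\partial_t$, realizing $\Sigma$ as convex.

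Second, I would perturb $\Sigma$ so that $\F$ becomes Morse-Smale: all singularities are non-degenerate (elliptic or hyperbolic), there are no saddle-saddle connections, and every leaf is either a singular point, a closed orbit, or limits on a singular point/closed orbit. Since $\F$ depends continuously on $\Sigma$, Peixoto-type genericity arguments (applied inside the space of characteristic foliations, which is parametrized by $C^\infty$-small perturbations of the embedding) yield such an approximation. On a Morse-Smale characteristic foliation one constructs $\Gamma$ explicitly: take the boundary of a small tubular neighborhood of the union of the stable separatrices of negative elliptic points (together with the negative elliptic points themselves). By construction $\Gamma$ is transverse to $\F$, it separates the basins of positive and negative elliptic points, and one can choose $u, \omega$ compatible with the divergence condition — these are standard verifications for Morse-Smale foliations.

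The step I expect to be the main obstacle is the boundary case, which is exactly where the hypothesis $\tb(\partial \Sigma) \leq 0$ enters. Near a Legendrian boundary component $\gamma$, I would first apply a $C^0$-small perturbation (fixing $\partial \Sigma$ itself) to place $\F$ into a standard collar model in which $\partial \Sigma$ alternates between $2|\tb(\gamma)|$ elliptic and hyperbolic singularities of $\F$ on $\partial \Sigma$, with the interior side of the collar foliated by arcs running between adjacent singularities. The sign $\tb(\gamma) \leq 0$ is precisely what allows such a model: the twisting of $\xi$ along $\gamma$ relative to $T\Sigma$ must be non-positive for the dividing multicurve to extend to a properly embedded $\Gamma$ meeting $\partial \Sigma$ transversely (in $2|\tb(\gamma)|$ points per component) rather than forcing $\gamma$ to be a closed leaf with the wrong sign of holonomy. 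With the collar model fixed, the interior Morse-Smale perturbation is carried out relative to this collar, and the resulting global $\Gamma$ yields the required convex perturbation.
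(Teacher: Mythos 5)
This statement is quoted in the paper as background (Section \ref{subsec:Giroux}) and cited to Giroux and Honda; the paper gives no proof of it, so there is nothing internal to compare against. Your proposal reconstructs the standard argument from those references, and its skeleton is correct: Giroux's criterion (convexity $\Leftrightarrow$ the characteristic foliation is divided, with the $I$-invariant model $\ker(u\,dt+\beta)$ supplying the transverse contact vector field), a generic $C^\infty$-small perturbation putting $\F$ into a tame dynamical form from which $\Gamma$ is built, and the boundary normalization where $\tb(\partial\Sigma)\le 0$ enters because $\Gamma$ must meet each boundary component in $-2\tb\ge 0$ points (exactly Kanda's formula, Theorem \ref{thm:Kanda}). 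Three points deserve correction or more care. First, elliptic points have no separatrices: the graph whose tubular neighborhood boundary yields $\Gamma$ is the union of the negative elliptic points and the \emph{stable separatrices of the negative hyperbolic points}, and one must also adjoin annular neighborhoods of the attracting (resp.\ repelling) closed orbits, which your construction omits. Second, genuine Peixoto density is stronger than what is available or needed here; Giroux's argument only requires excluding retrograde saddle--saddle connections and degenerate closed leaves, and the nontrivial input is that $C^\infty$-small perturbations of the \emph{embedding} realize enough perturbations of the directing vector field of $\F$ to achieve this — that is the actual content of the genericity step, not a formal appeal to Peixoto. Third, the standard collar form along a Legendrian boundary component has $2|\tb|$ singular points of \emph{alternating sign} on $\gamma$ (the points where $\xi=T\Sigma$), conventionally taken half-elliptic, rather than alternating elliptic/hyperbolic type; this is only a normalization detail, but as written your model does not match the references. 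With these repairs the sketch is the standard proof.
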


The \emph{Giroux Realization Theorem} states that for a convex surface, the contact germ is determined, up to isotopy, just by the dividing set. To clarify this statement, we introduce some notation. We denote by 
$\Emb(\Sigma, \F, (M, \xi))$ the space of embeddings 
$e : \Sigma \to M$ with characteristic foliation $\F$. Similarly, we define the space of convex embeddings $e : \Sigma \to M$ with dividing set $\Gamma$ as $\Emb(\Sigma, \Gamma, (M, \xi))$.

\begin{theorem}[Giroux Realization Theorem \cite{GirouxSurfaces}]\label{teo:realizacion-Giroux}
    Let $e:\Sigma\to (M,\xi)$ be a convex embedding with characteristic foliation $\F$ and dividing set $\Gamma$. Then, the natural inclusion
    \[
    i:\Emb(\Sigma, \F, (M,\xi))\to \Emb(\Sigma, \Gamma, (M, \xi))
    \]
    is a weak homotopy equivalence.
\end{theorem}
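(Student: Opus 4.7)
The plan is to exhibit a deformation retraction of $\Emb(\Sigma,\Gamma,(M,\xi))$ onto $\Emb(\Sigma,\F,(M,\xi))$ by modelling a neighborhood of any convex embedding on the standard $I$-invariant normal form and using a parametric realization lemma to push the characteristic foliation to the target $\F$ while holding the dividing set fixed.

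First, I would promote the $I$-invariant neighborhood theorem to parametric form. For any continuous family $\{e_\lambda\}_{\lambda\in K}$ in $\Emb(\Sigma,\Gamma,(M,\xi))$ indexed by a compact CW complex $K$, the aim is to construct a continuous family of contact embeddings $\Phi_\lambda : \Sigma\times(-\varepsilon,\varepsilon)\hookrightarrow(M,\xi)$ restricting to $e_\lambda$ on $\Sigma\times\{0\}$, with pullback contact structure $\ker(f_\lambda\,dt+\beta_\lambda)$ and $f_\lambda^{-1}(0)=\Gamma$. This follows from Giroux's contractibility of the space of contact vector fields transverse to a given convex surface, applied parametrically, together with a preliminary contactomorphism straightening $f_\lambda^{-1}(0)$ onto the chosen representative $\Gamma$.

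Next, I would implement the realization lemma in this model. The graph $\Sigma_g=\{(x,g(x))\}$ of a $C^0$-small function $g\in C^\infty(\Sigma)$ inherits characteristic foliation $\ker(f_\lambda\,dg+\beta_\lambda)\subset T\Sigma$. Given the target $\F$, which by hypothesis is divided by the same $\Gamma$, a direct computation produces a smooth path $g_{\lambda,s}$ with $g_{\lambda,0}\equiv 0$, supported away from $\Gamma$, such that the characteristic foliation of $\Sigma_{g_{\lambda,1}}$ equals $\F$; this is the classical Giroux flexibility applied fibrewise and parametrically. Composing with $\Phi_\lambda$ yields an isotopy $e_\lambda^s\in\Emb(\Sigma,\Gamma,(M,\xi))$ from $e_\lambda^0=e_\lambda$ to $e_\lambda^1\in\Emb(\Sigma,\F,(M,\xi))$.

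Finally, to extract the weak homotopy equivalence, I would run the above construction relatively. Given a map of pairs $(D^k,S^{k-1})\to(\Emb(\Sigma,\Gamma,(M,\xi)),\Emb(\Sigma,\F,(M,\xi)))$, a cutoff in the radial parameter of $D^k$ renders the deformation stationary on $S^{k-1}$, producing a null-homotopy of pairs and hence the desired isomorphism on $\pi_k$. The main obstacle is the parametric realization step: one must keep $g_{\lambda,s}$ continuous in $\lambda$, small enough that $\Sigma_{g_{\lambda,s}}$ never leaves the model neighborhood, and arranged to land on the \emph{exact} target foliation $\F$, not merely one smoothly equivalent to it. The most delicate point is ensuring the dividing set coincides pointwise with the fixed curve $\Gamma$ across the entire family, which is handled by the initial straightening of $f_\lambda^{-1}(0)$ carried out in the first step.
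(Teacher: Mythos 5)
The paper does not prove this statement; it is quoted as a known theorem of Giroux, so your proposal can only be measured against the standard argument. Your overall architecture is the right one: parametric $I$-invariant neighborhoods (via contractibility of the space of transverse contact vector fields), a parametric realization step inside the model $(\Sigma\times(-\varepsilon,\varepsilon),\ker(f\,dt+\beta))$, and a relative/cutoff argument to get the isomorphism on all $\pi_k$. The first and third steps are fine.

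The gap is in the realization step. You propose to realize the target foliation $\F$ on the \emph{tautological} graph $x\mapsto(x,g(x))$, whose induced foliation is $\ker(f\,dg+\beta)$, and moreover to take $g$ supported away from $\Gamma$. This cannot work in general: the condition $\ker(f\,dg+\beta)=\F$ amounts to $f\,dg+\beta=h\beta'$ for a positive function $h$ (where $\F=\ker\beta'$), which is overdetermined; in particular along $\Gamma$, where $f=0$, it forces $\ker\beta=\ker\beta'$, i.e.\ that the original and target foliations already coincide along $\Gamma$ --- and taking $g\equiv 0$ near $\Gamma$ freezes the foliation there, whereas $\F$ need not agree with the original foliation near $\Gamma$ (being divided by $\Gamma$ only constrains transversality and the sign of the flow). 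Giroux's flexibility lemma does \emph{not} produce such a $g$; its proof builds a second invariant contact form $f'\,dt+\beta'$ inducing $\F$ with the same zero set $\Gamma$, checks that the linear interpolation of the two forms stays contact (this is exactly where ``divided by the same $\Gamma$'' enters), and applies Gray stability. The resulting isotopy of $\Sigma$ is a graph map \emph{composed with a reparametrizing diffeotopy of $\Sigma$} fixing $\Gamma$ but not its normal derivative; that reparametrization is what lets the pulled-back foliation hit $\F$ on the nose. If you replace your graph Ansatz by this Gray-stability isotopy (which is parametric and relative because every choice involved lives in a contractible space), the rest of your argument goes through.
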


Finally, we have the Giroux criterion, which excludes certain configurations of dividing sets in tight contact $3$-manifolds:

\begin{theorem}[Giroux Tightness Criterion \cite{GirouxCircleBundles}]\label{teo:Criterio-Giroux}
    Let $\Sigma\subseteq (M,\xi)$ be a convex surface with possibly empty Legendrian boundary. Then, an $I$-invariant neighborhood of $\Sigma$ is tight if and only if one of the following conditions holds:
    \begin{enumerate}[label=(\roman*)]
        \item $\Sigma=\NS^2$ and the dividing set $\Gamma$ is connected, or
        \item $\Sigma\neq \NS^2$ and the dividing set does not contain homotopically trivial curves.
    \end{enumerate}
\end{theorem}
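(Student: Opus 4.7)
My plan is to prove the two directions separately. Necessity will follow by producing an overtwisted disk whenever the dividing set violates the criterion, using Legendrian realization together with the Bennequin inequality. Sufficiency will follow by embedding the $I$-invariant neighborhood into a closed tight contact $3$-manifold, since tightness is inherited by restriction.

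For necessity, I argue by contrapositive. First suppose $\Sigma\neq\NS^2$ and $\Gamma$ contains a null-homotopic component $\gamma$; choose $\gamma$ innermost, so it bounds an embedded disk $D\subseteq\Sigma$ whose interior is disjoint from $\Gamma$. The goal is to produce a Legendrian knot $L$ in the $I$-invariant neighborhood that bounds an embedded disk with $\tb(L)\geq 0$, which would be forbidden in a tight manifold. Such an $L$ can be obtained by applying Legendrian realization (Theorem~\ref{teo:realizacion-Giroux}) to a curve constructed from $\gamma$ together with an arc chosen to satisfy Honda's non-isolating condition, after which a transverse pushoff of $\gamma$ into an adjacent $t$-slice becomes Legendrian and bounds a disk in the neighborhood disjoint from the rest of $\Gamma$. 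The case $\Sigma=\NS^2$ with disconnected $\Gamma$ reduces to the previous one: any component of $\NS^2\setminus\Gamma$ not adjacent to every component of $\Gamma$ provides an innermost disk on which the same construction runs.

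For sufficiency, the strategy is to realize the $I$-invariant neighborhood as a codimension-zero submanifold of a closed tight contact $3$-manifold. When $\Sigma=\NS^2$ and $\Gamma$ is connected, the $I$-invariant neighborhood is contactomorphic to an equatorial region of $(\NS^3,\xi_{\std})$. When $\Sigma\neq\NS^2$ and every component of $\Gamma$ is essential, I would cap off $\Sigma\times I$ along each boundary surface by standard tight handlebodies whose boundary dividing sets agree with $\Gamma$; the essentiality of each dividing curve is precisely what guarantees the existence of such tight cappings. Invoking the classification of tight contact structures on these elementary pieces (following Eliashberg and Honda), the resulting closed contact manifold is tight, and therefore so is the original $I$-invariant neighborhood.

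The main obstacle is the sufficiency direction. Necessity is essentially local, being a direct consequence of Giroux's realization theorem and Bennequin's inequality. Sufficiency, by contrast, requires producing an ambient tight model, and one must rule out the possibility that the cappings or gluings secretly introduce null-homotopic dividing curves on convex surfaces elsewhere in the ambient manifold, as those would reintroduce overtwistedness through the necessity half of the argument. Handling this carefully typically requires either Eliashberg's classification of tight structures on $\NS^2\times I$ and on handlebodies, or Honda's bypass calculus to track the combinatorics of the dividing set through every attachment.
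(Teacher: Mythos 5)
This is a background theorem that the paper only quotes with a citation to Giroux (see also Honda); it supplies no proof, so your sketch can only be measured against the standard argument. Your necessity direction is essentially that argument: Legendrian-realize a curve $\delta$ parallel to an innermost contractible component of $\Gamma$, so that $\delta$ is disjoint from $\Gamma$, hence has $\tb(\delta)=-\tfrac{1}{2}\#(\delta\cap\Gamma)=0$ while bounding an embedded disk, contradicting Theorem~\ref{thm:Bennequin-Eliashberg}. Two caveats: the phrase ``a transverse pushoff of $\gamma$ \ldots becomes Legendrian'' is muddled (nothing is transverse here; one realizes $\delta$ directly via Theorem~\ref{teo:realizacion-Giroux}/LeRP), and the isolating case is a genuine issue you only wave at --- e.g.\ for $\Sigma=T^2$ with $\Gamma$ a single contractible curve, every parallel copy $\delta$ isolates the punctured-torus component, and no arc with endpoints on $\Gamma$ obviously repairs this; the standard fix is to first isotope $\Sigma$ (folding) to create extra dividing curves in the isolated region before applying LeRP.

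The real gap is in your sufficiency direction. Capping off $\Sigma\times I$ by ``standard tight handlebodies whose boundary dividing sets agree with $\Gamma$'' is problematic on two counts. First, such cappings need not exist: an essential multicurve on $\Sigma$ can bound compressing disks in any chosen handlebody, and prescribed boundary dividing sets on handlebodies do not always admit tight structures. Second, and more fundamentally, tightness is not preserved under gluing along convex surfaces; the gluing theorems you would need (Colin, Honda) take the Giroux criterion as an input, so your argument is circular. You correctly identify sufficiency as ``the main obstacle,'' but the machinery you propose to overcome it presupposes the theorem. The standard proof avoids gluing entirely: one shows the $I$-invariant model $(\Sigma\times\R,\ker(f\,dt+\beta))$ is \emph{universally} tight by passing to the universal cover, where the lifted dividing set consists of properly embedded lines in $\R^2$ and the lifted structure embeds into (or is directly compared with) the standard tight $\R^3$; the $\NS^2$ case embeds into $(\NS^3,\xi_{\std})$ as you say. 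You should replace the capping strategy with this covering-space argument.
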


We finish by recalling the following celebrated 

\begin{theorem}[Colin \cite{Colin}]\label{Colin97}
    Let $(M,\xi)$ be a contact manifold, and let $f:\NS^2\to M$ be an embedding with convex image. If the contact manifold $(M\setminus f(\NS^2),\xi)$ is tight, then $(M,\xi)$ is also tight.
\end{theorem}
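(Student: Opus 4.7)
The plan is to argue by contradiction: assume $(M,\xi)$ admits an overtwisted disk $\Delta$, and produce an overtwisted disk inside $M\setminus f(\NS^2)$, contradicting the tightness hypothesis. Write $\Sigma:=f(\NS^2)$ for the convex sphere, $\Gamma$ for its dividing set, and fix an $I$-invariant neighborhood $U\cong(\NS^2\times[-1,1],\xi_{\inv})$ of $\Sigma$. The argument splits according to whether $\Gamma$ is connected.

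Suppose first that $\Gamma$ has more than one component. Fix $\varepsilon\in(0,1)$ and consider the thinner $I$-invariant slab $\NS^2\times[\varepsilon,1]\subseteq U\setminus\Sigma$. This is an $I$-invariant neighborhood of the convex sphere $\NS^2\times\{\varepsilon\}$, whose dividing set is isotopic to $\Gamma$ and hence also disconnected. By Theorem \ref{teo:Criterio-Giroux}, this slab is overtwisted, so its interior contains an embedded overtwisted disk. That disk lies entirely inside $M\setminus\Sigma$, contradicting the tightness of the complement.

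Suppose instead that $\Gamma$ is a single circle. Then by Theorem \ref{teo:Criterio-Giroux}, $U$ is tight, and I aim to reduce $|\Delta\cap\Sigma|$ to zero by iterated innermost-disk surgery, producing an overtwisted disk disjoint from $\Sigma$. After a generic perturbation assume $\Delta\pitchfork\Sigma$ and $\partial\Delta\cap\Sigma=\emptyset$, so $\Delta\cap\Sigma$ is a finite disjoint union of circles in $\Delta$. Pick an innermost such circle $c\subseteq\Delta\cap\Sigma$, bounding a subdisk $\delta\subseteq\Delta$ with $\operatorname{Int}(\delta)\cap\Sigma=\emptyset$, lying on one side of $\Sigma$ inside $U$. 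Since $\Gamma$ is a single circle on $\NS^2$, exactly one of the two disks of $\NS^2$ bounded by $c$, call it $D$, can be chosen so that the region in $U$ between $\delta$ and $D$ is a topologically trivial $3$-ball $B$ contained in a thin $I$-invariant collar, hence carrying a tight contact structure. Eliashberg's uniqueness of tight contact structures on $B$ with prescribed boundary characteristic foliation produces a contact isotopy inside $B$ carrying $\delta$ to a small push-off $\delta'$ of $D$ satisfying $\delta'\cap\Sigma=\emptyset$. Setting $\Delta':=(\Delta\setminus\delta)\cup\delta'$ yields an embedded disk with $\partial\Delta'=\partial\Delta$; since $\tb$ of a null-homologous Legendrian is independent of the choice of Seifert surface, $\Delta'$ remains an overtwisted disk and satisfies $|\Delta'\cap\Sigma|=|\Delta\cap\Sigma|-1$. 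Iterating removes all intersections, producing an overtwisted disk inside $M\setminus\Sigma$, the desired contradiction.

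The main obstacle is the innermost-disk surgery in the second case: verifying that $\delta\cup D$ bounds a genuine $3$-ball inside $U$ rather than a homotopically non-trivial sphere, and that $\delta$ can be contact-isotoped inside that ball to the push-off $\delta'$ through embedded disks without introducing new intersections with $\Sigma$. Both points are handled by choosing $\delta$ sufficiently close to $\Sigma$ so that $\delta\cup D$ lies in a thin $I$-invariant collar where it is standardly embedded, and then invoking Eliashberg's classification of tight contact structures on the $3$-ball to produce the required isotopy.
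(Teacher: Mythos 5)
This is a quoted theorem of Colin; the paper gives no proof, so your argument must stand on its own. Your first case ($\Gamma$ disconnected) is correct and easy. The second case, which is the entire content of the theorem, has genuine gaps. First, the innermost subdisk $\delta\subseteq\Delta$ only meets a neighborhood of $\Sigma$ along a collar of $c=\partial\delta$; the rest of $\delta$ can wander arbitrarily far into $M\setminus\Sigma$. So the assertion that $\delta\cup D$ cobounds a ball ``contained in a thin $I$-invariant collar'' is simply false, and with it the claim that Eliashberg's uniqueness applies for free. Worse, $\delta\cup D$ need not bound a ball at all (no irreducibility of $M$ cut along $\Sigma$ is assumed), and even when it does, an isotopy of $\delta$ to a push-off $\delta'$ of $D$ supported in that ball need not keep $\Delta'=(\Delta\setminus\delta)\cup\delta'$ embedded: circles of $\Delta\cap\Sigma$ other than $c$ may lie inside $D$ (innermost on $\Delta$ is not innermost on $\Sigma$), so other sheets of $\Delta$ pass through the region you sweep.

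Second, and more fundamentally, eliminating an innermost circle is precisely the contact-geometric heart of Colin's theorem, and it is not soft. One must first make $c$ Legendrian via the Legendrian realization principle (possible because $\Gamma$ is a single circle, so $c$ can be made non-isolating), and then match the twisting of $\xi$ along $c$ measured from $\delta$ -- controlled by the Bennequin--Eliashberg inequality in the tight piece $M\setminus\Sigma$ -- with the twisting measured from $D$, controlled by $\#(c\cap\Gamma)$ as in Kanda's formula. Only after these framings are reconciled does Eliashberg's classification of tight contact structures on the ball yield a replacement of $\delta$ by $\delta'$ with controlled contact germ; your proposal skips this matching entirely, and it is where the hypothesis that the complement is tight actually enters. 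Finally, the closing sentence is off: $\Delta'$ will not ``remain an overtwisted disk'' in the sense of having the standard characteristic foliation. The correct (and easily repaired) statement is that $\Delta'$ coincides with $\Delta$ near the common boundary, hence induces the same framing there, so $\tb(\partial\Delta')=0$ relative to $\Delta'$; a convex perturbation together with Giroux's criterion (Theorem \ref{teo:Criterio-Giroux}) then shows that a neighborhood of the final disk, which lies in $M\setminus\Sigma$, is overtwisted. That repair is routine; the gaps in the surgery step are not.
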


\subsection{Invariants of Legendrian knots}\label{subsec:invariantes}
The following Kanda theorem establishes fundamental relationships between the formal invariants of a Legendrian knot and the topology of its convex Seifert surface, providing key tools for understanding Legendrian knot theory through convex surface theory.
\begin{theorem}[Kanda \cite{Kanda}]\label{thm:Kanda}
    Let $\Lambda\subseteq (M,\xi)$ be an oriented Legendrian knot, $\Sigma$ a convex Seifert surface of $\Lambda$, and $\Gamma$ a dividing set of $\Sigma$. Then,
\begin{enumerate}
    \item $\tb(\Lambda)=-\frac{1}{2}\#( \Gamma\cap \Lambda)$,
    \item $\rot(\Lambda)=\chi(\Sigma_{+})-\chi(\Sigma_{-})$,
\end{enumerate}
where $\Sigma_{+}$ and $\Sigma_{-}$ are positive and negative component of $\Sigma\setminus\Gamma$, finally  $\chi(\Sigma)$ is the Euler characteristic.
\end{theorem}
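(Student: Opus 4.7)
The plan is to pass to Giroux's normal form for a convex surface and compute both invariants locally. Since $\Sigma$ is convex, I take an $I$-invariant neighborhood $(\Sigma\times I,\ker(u\,dt+\beta))$ as recalled in Subsection~\ref{subsec:Giroux}, with $u\in C^\infty(\Sigma)$, $\beta\in\Omega^1(\Sigma)$, and $\Gamma=u^{-1}(0)$. Write $\Sigma_{\pm}=\{\pm u>0\}$. Over $\Sigma\setminus \Gamma$, the projection $\xi|_\Sigma \to T\Sigma$ is a bundle isomorphism, preserving orientations on $\Sigma_+$ and reversing them on $\Sigma_-$.

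For part (1), I compare the Seifert framing and the contact framing of $\Lambda=\partial \Sigma$. The Seifert push-off is obtained by flowing $\Lambda$ in the $\partial_t$ direction. Since the contact form $\alpha=u\,dt+\beta$ satisfies $\alpha(\partial_t)=u$, the vector $\partial_t$ fails to lie in $\xi$ precisely where $u$ changes sign, i.e.\ along $\Gamma$. The contact framing is a vector field along $\Lambda$ with values in $\xi$, transverse to $T\Lambda$. Tracking the angle between the contact framing and the Seifert framing in the normal bundle of $\Lambda$, this angle rotates by $-\pi$ each time $\Lambda$ crosses $\Gamma$ (equivalently, each time $\partial_t$ switches sides of $\xi$), and is locally constant elsewhere. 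The self-linking measured by $\tb$ is exactly this total twist, yielding $\tb(\Lambda)=-\tfrac{1}{2}\#(\Gamma\cap \Lambda)$.

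For part (2), I realize $\rot(\Lambda)$ as the obstruction to extending over $\Sigma$ a section of $\xi|_\Sigma$ that agrees with $T\Lambda$ along the boundary. Let $v$ be a vector field on $\Sigma$ directing the characteristic foliation $\F$; then $v$ is simultaneously a section of $T\Sigma$ and of $\xi|_\Sigma$, and it is nonvanishing along $\Lambda$ because $\Lambda$ is a leaf of $\F$. After a generic Morse-type perturbation of $\F$, realized by a genuine contact germ via Theorem~\ref{teo:realizacion-Giroux}, I may assume that $v$ has only isolated nondegenerate zeros, each lying in the interior of $\Sigma_+$ or $\Sigma_-$. By the orientation comparison from the setup, the index of $v$ as a section of $\xi|_\Sigma$ equals its index as a section of $T\Sigma$ at singularities in $\Sigma_+$, and the negative of it at singularities in $\Sigma_-$. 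Summing the $\xi$-indices computes $\rot(\Lambda)$, while Poincar\'e--Hopf applied separately to $\Sigma_+$ and $\Sigma_-$ computes their Euler characteristics, giving $\rot(\Lambda)=\chi(\Sigma_+)-\chi(\Sigma_-)$.

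The main technical obstacle will be the sign and orientation bookkeeping: controlling the direction of the $\pi$-jumps in the framing comparison of (1) and applying Poincar\'e--Hopf correctly on the surfaces-with-corners $\Sigma_{\pm}$, whose boundary consists of arcs of $\Lambda$ along which $v$ is tangent and arcs of $\Gamma$ which $v$ crosses transversely. A careful local model for $v$ near each arc of $\Gamma$ and each arc of $\Lambda$ should make the boundary contributions on $\Sigma_+$ and $\Sigma_-$ match up and reduce the count to the interior indices.
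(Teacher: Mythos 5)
The paper does not prove this statement; it is quoted from Kanda, so there is no internal proof to compare against. Your sketch follows the standard route from the literature (framing comparison in the $I$-invariant model for $\tb$; a relative Euler class / Poincar\'e--Hopf count for $\rot$), and part (1) is essentially correct modulo the one point you yourself flag, namely justifying that every crossing of $\Gamma$ contributes a twist of the \emph{same} sign $-\pi$ (this is where the contact condition, rather than pure topology, enters).

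There is, however, a genuine error in part (2): the claim that the directing vector field $v$ of $\F$ ``is nonvanishing along $\Lambda$ because $\Lambda$ is a leaf of $\F$'' is false whenever $\tb(\Lambda)\neq 0$ --- which is exactly the regime this paper needs, since its disks have $\tb(\partial\D^2)\leq 0$. Along $\Lambda$ both $\xi_p$ and $T_p\Sigma$ contain $T_p\Lambda$, so each determines a section of the $\R\mathrm{P}^1$-bundle of lines in the normal bundle of $\Lambda$; part (1) says these two sections have relative winding $\tb(\Lambda)$, so for $\tb(\Lambda)\neq 0$ they must coincide at (generically exactly) $2|\tb(\Lambda)|$ points. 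At each such point $\xi_p=T_p\Sigma$, i.e.\ $p$ is a singularity of $\F$ lying \emph{on} $\Lambda$, and these boundary singularities alternate with the points of $\Gamma\cap\Lambda$. Consequently $\Lambda$ is only a union of leaves and singular points, $v$ vanishes on $\Lambda$, and two steps of your argument break as stated: the identification of $\rot(\Lambda)$ with the sum of interior $\xi$-indices of $v$ presupposes a boundary section that is nonvanishing and equal to the tangent direction, and Poincar\'e--Hopf on $\Sigma_\pm$ must be applied with $n=|\tb(\Lambda)|$ half-singularities of each sign sitting on the $\Lambda$-part of $\partial\Sigma_\pm$ in addition to the transverse exit behaviour along $\Gamma$. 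The standard repair is to put the collar of $\Lambda$ in normal form, classify the boundary singularities as positive/negative half-elliptic or half-hyperbolic, and verify that their contributions to the relative Euler class of $\xi$ and to $\chi(\Sigma_+)-\chi(\Sigma_-)$ cancel; without this your reduction ``to the interior indices'' does not go through. The rest of the orientation bookkeeping (index in $\xi$ versus index in $T\Sigma$ flipping sign on $\Sigma_-$) is correct.
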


The following celebrated result provides a bound on the possible formal invariants of a Legendrian knot in the tight case.

\begin{theorem}[Bennequin-Eliashberg Inequality \cite{Bennequin,EliashbergTwenty}]\label{thm:Bennequin-Eliashberg}
Let $\Lambda\subseteq (M,\xi)$ be an oriented Legendrian knot in a tight contact 3–manifold
$(M,\xi)$, and $\Sigma$ a Seifert surface of $\Lambda$. Then, the following holds
$$\tb(\Lambda) + |\rot(\Lambda )| \leq -\chi(\Sigma).$$
\end{theorem}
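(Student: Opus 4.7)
The plan is to bring $\Sigma$ into the convex-surface framework just developed and then read everything off its dividing set, invoking tightness of the ambient manifold only once, in a small combinatorial step. First, I would apply Theorem~\ref{teo:Giroux-Honda} to perturb $\Sigma$ into a convex surface (which uses the preliminary $\tb(\Lambda)\leq 0$, a standard consequence of tightness). Writing the resulting dividing set $\Gamma=\Gamma_{\mathrm{arc}}\sqcup\Gamma_{\mathrm{circ}}$, setting $a:=\#\Gamma_{\mathrm{arc}}$ and decomposing $\Sigma\setminus\Gamma=\Sigma_+\sqcup\Sigma_-$, Kanda's formulas (Theorem~\ref{thm:Kanda}) give
\[
\tb(\Lambda)=-a,\qquad \rot(\Lambda)=\chi(\Sigma_+)-\chi(\Sigma_-).
\]
Combined with the Mayer--Vietoris identity $\chi(\Sigma_+)+\chi(\Sigma_-)=\chi(\Sigma)+\chi(\Gamma)=\chi(\Sigma)+a$, a one-line manipulation reduces the target inequality $\tb+|\rot|\leq -\chi(\Sigma)$ to the single combinatorial estimate
\[
\max\bigl(\chi(\Sigma_+),\chi(\Sigma_-)\bigr)\leq a.
\]

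I would prove this estimate component by component. For each connected component $C$ of $\Sigma_\pm$ let $b(C)$ denote the number of dividing arcs lying on $\partial C$; since every arc of $\Gamma_{\mathrm{arc}}$ separates one $\Sigma_+$-component from one $\Sigma_-$-component, the sums satisfy $\sum_{C\subseteq \Sigma_\pm}b(C)=a$ on each side. The goal is then $\chi(C)\leq b(C)$ for every such $C$, which is immediate except in the delicate case $\chi(C)=1$, i.e.\ when $C$ is a disk. Here $\partial C$ is a single circle in $\Sigma$, and since $\partial\Sigma=\Lambda\neq \emptyset$ forces $\Sigma\neq \NS^2$, Giroux's tightness criterion (Theorem~\ref{teo:Criterio-Giroux}) forbids any closed dividing curve from being null-homotopic in $\Sigma$. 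Hence $\partial C$ cannot consist of a single closed dividing curve (such a curve would bound the disk $C\subseteq \Sigma$ and so be null-homotopic), so it must cyclically alternate dividing arcs with Legendrian arcs, forcing $b(C)\geq 1$.

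The only genuine obstacle in the proof is this last combinatorial step: one needs to confirm that the Giroux criterion supplies \emph{exactly} what is needed to exclude the bad configurations—a disk bounded by a single closed dividing curve and, more generally, any null-homotopic closed dividing curves—and to handle the auxiliary sub-cases with $b(C)=0$ by the same tightness argument (no boundary component of such a $C$ can be a closed dividing curve bounding a disk in $\Sigma$, so $\chi(C)\leq 0$ there). Everything else is routine, relying only on the symmetry $\Sigma_+\leftrightarrow \Sigma_-$ and the Euler-characteristic bookkeeping of the first paragraph.
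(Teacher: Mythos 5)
The paper does not prove this statement---it is quoted as a classical result of Bennequin and Eliashberg---so your proposal is judged on its own. The convex-surface strategy you follow (Kanda's formulas, the Euler-characteristic identity $\chi(\Sigma_+)+\chi(\Sigma_-)=\chi(\Sigma)+a$, reduction to $\max(\chi(\Sigma_+),\chi(\Sigma_-))\le a$, and the component-by-component count using the Giroux criterion) is the standard modern proof, and that combinatorial core is correct.

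There is, however, a genuine gap in your opening step: the assertion that $\tb(\Lambda)\le 0$ is ``a standard consequence of tightness'' is false for knots other than the unknot. For example, the right-handed trefoil in $(\NS^3,\xi_{\std})$ has Legendrian representatives with $\tb=1$, consistent with the bound $-\chi(\Sigma)=1$ you are trying to prove. Since Theorem~\ref{teo:Giroux-Honda} only lets you perturb a surface with Legendrian boundary to a convex one when the boundary has non-positive twisting relative to the surface framing---which for a Seifert surface is exactly $\tb(\Lambda)\le 0$---your argument cannot even begin when $\tb(\Lambda)>0$, and this case genuinely occurs. The standard repair is an extra stabilization step: a positive stabilization sends $(\tb,\rot)$ to $(\tb-1,\rot+1)$ and so preserves $\tb+\rot$, while a negative one preserves $\tb-\rot$; stabilizing enough times of the appropriate sign reduces each of the two inequalities $\tb\pm\rot\le-\chi(\Sigma)$ to the case $\tb\le 0$ (the stabilized knot is smoothly isotopic to $\Lambda$, hence bounds a Seifert surface of the same Euler characteristic). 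Without this reduction the proof is incomplete. A second, smaller omission: in the disk case $\chi(C)=1$ with $b(C)=0$ you only exclude $\partial\overline{C}$ being a single closed dividing curve, but $\partial\overline{C}$ could a priori be the Legendrian boundary $\Lambda$ itself, which forces $\Sigma=\overline{C}$ to be a disk whose dividing set consists only of closed curves or is empty; the former is excluded by the Giroux criterion, but the latter requires the separate (Stokes-type) fact that a convex surface with Legendrian boundary has non-empty dividing set.
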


\subsection{Bypass basics}\label{sec:bypass-basics}

A \em bypass \em is a convex half-disk $D$ with a Legendrian boundary, formed by two Legendrian curves, 
denoted $\gamma$ and $\beta$. We refer to $\gamma$ as the \em attaching arc \em and to $\beta$ as the 
\em Honda arc\em. This half-disk contains four singularities, as shown in Figure \ref{fig:bypass1}. We say that $D$ is a \em bypass for a surface $\Sigma$ \em if $D\cap \Sigma=\gamma$. 

\begin{figure}[h]
    \centering
    \includegraphics[scale=0.3]{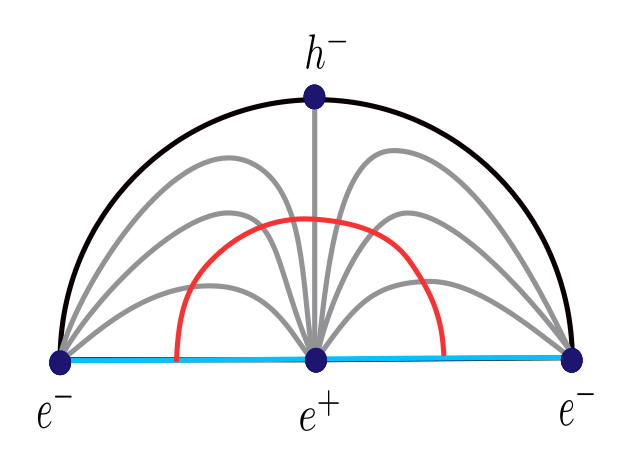}
    \caption{A positive bypass, where we can see in light blue the attaching arc, in black the Honda arc, in red the dividing set and in gray the characteristic foliation.}
    \label{fig:bypass1}
\end{figure}

The following theorem describes the interaction of a bypass when it is attached from above to a convex surface

\begin{lemma}[Bypass Attachment \cite{Honda}]\hfill\break
     Assume $D$ is a bypass for a convex surface $\Sigma$.
    Then there exists a neighborhood of $\Sigma\cup D \subset M$ diffeomorphic to $\Sigma\times[0, 1]$, such
    that $\Sigma_i = \Sigma\times\{i\}$, $i = 0, 1$, are convex, 
    $\Sigma\times[0, \varepsilon]$ is $I$–invariant, $\Sigma= \Sigma\times\{\varepsilon\}$,
    and $\Gamma_{\Sigma_1}$ is obtained from $\Gamma_{\Sigma_0}$ by performing the Bypass Attachment operation
    depicted in Figure \ref{fig:bypass-2} in a neighborhood of the attaching arc $\gamma$.
\end{lemma}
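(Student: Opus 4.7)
The plan is to reduce the lemma to a local computation in a standard model of a bypass, using Giroux's uniqueness of convex neighborhoods (Theorem \ref{teo:giroux-entorno}) and the Giroux Realization Theorem (Theorem \ref{teo:realizacion-Giroux}). By these results, up to a contactomorphism supported near $\Sigma \cup D$, we may assume that near the attaching arc $\gamma$ the configuration agrees with a fixed local model: a square in $\Sigma$ together with a half-disk $D$ sitting above it, in which both the characteristic foliation on $\Sigma$ and the characteristic foliation on $D$ are standard.

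First I would construct the ambient neighborhood. Start with an $I$--invariant neighborhood $\Sigma\times[0,2\varepsilon]$ of $\Sigma$, placing $\Sigma = \Sigma\times\{\varepsilon\}$. Since $D$ is convex with Legendrian boundary, it carries its own convex neighborhood $N(D)$, which we may shrink so that $N(D)\cap (\Sigma\times[0,2\varepsilon])$ is a small collar of $\gamma$ and $N(D)$ lies on the specified side of $\Sigma$, say in $\Sigma\times[\varepsilon,2\varepsilon]$. The union $(\Sigma\times[0,2\varepsilon])\cup N(D)$ then has codimension--$2$ corners along the circle where $\partial N(D)$ meets $\Sigma\times\{2\varepsilon\}$ near $\gamma$. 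Smoothing these corners produces a region diffeomorphic to $\Sigma\times[0,1]$, with $\Sigma_0=\Sigma\times\{0\}$ at the bottom and a new top boundary $\Sigma_1$; the $I$--invariance of the bottom slab $\Sigma\times[0,\varepsilon]$ is preserved by construction.

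Next I would show both $\Sigma_0$ and $\Sigma_1$ are convex and compute $\Gamma_{\Sigma_1}$. Convexity of $\Sigma_0$ is immediate from the $I$--invariant model. For $\Sigma_1$ I would invoke Honda's edge--rounding lemma, which asserts that two convex surfaces meeting transversely along a Legendrian arc can be smoothed to a convex surface, and which provides an explicit recipe for how the dividing sets of the two pieces glue. Applying this recipe to the dividing set $\Gamma_\Sigma$ and to the single bypass arc $\Gamma_D$ in the standard model, in a neighborhood of the three intersection points $\gamma\cap\Gamma_\Sigma$, reproduces the local modification shown in Figure \ref{fig:bypass-2}. Away from $\gamma$, the surface $\Sigma_1$ is $C^0$--close to $\Sigma_0$ and lies in the $I$--invariant region, so $\Gamma_{\Sigma_1}$ agrees with $\Gamma_{\Sigma_0}$ there.

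The main obstacle is the edge--rounding step: one must verify, in the standard model, that the smoothing of the two convex pieces along their common Legendrian corner is again convex and that its dividing set is exactly the concatenation of the two dividing sets along $\gamma$ with the expected $\pi/2$ rotation at the corner. Once this local contact--geometric lemma is established, the computation of $\Gamma_{\Sigma_1}$ reduces to the combinatorial picture of three arcs being reconfigured near $\gamma$, which yields precisely the bypass attachment move.
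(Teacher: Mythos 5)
The paper does not prove this lemma; it is quoted directly from Honda's work, so there is no in-paper argument to compare against. Your proposal reconstructs Honda's original proof in its standard form --- thicken $\Sigma$ to an invariant neighborhood, attach a vertically invariant neighborhood $N(D)$ of the convex half-disk on one side, observe that the union is diffeomorphic to $\Sigma\times[0,1]$, and compute $\Gamma_{\Sigma_1}$ by the edge-rounding lemma in a standard local model near $\gamma$ --- and this outline is correct. Two small imprecisions are worth fixing if you write this up. First, $N(D)$ cannot literally be contained in $\Sigma\times[\varepsilon,2\varepsilon]$, since $D$ protrudes from any small invariant neighborhood of $\Sigma$; what you mean is that $N(D)$ lies on the positive side of $\Sigma=\Sigma\times\{\varepsilon\}$ and meets $\Sigma\times[0,2\varepsilon]$ only in a collar of $\gamma$. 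Second, the top boundary $\Sigma_1$ contains \emph{two} parallel push-offs of $D$ (the two sides of $\partial N(D)$), each carrying a copy of the single dividing arc $\Gamma_D$, and the edge-rounding computation must concatenate both of these with $\Gamma_{\Sigma\times\{2\varepsilon\}}$; bookkeeping the rounding direction (the dividing curves ``turn'' a fixed way determined by the coorientations) is exactly what produces the move of Figure \ref{fig:bypass-2} rather than its mirror. You correctly identify edge rounding as the essential local lemma; since the paper itself defers the whole statement to \cite{Honda}, deferring that step to Honda as well is consistent with the level of detail here.
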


We will define a bypass $D$ for $\Sigma$ to be \em tight \em if there exists a tight neighborhood of $\Sigma\cup D$ and \em overtwisted \em in other case.

The following theorem states that the failure to convexity on $1$-parameter families of surfaces is precisely described by bypass attachments.

\begin{theorem}[Isotopy Discretization \cite{Colin,HondaGluing}]\label{thm:Discretization} 
Let $(M, \xi)$ be a contact manifold, and let $\Sigma$ and $\Sigma'$ be convex surfaces in $M$ with the same (possibly empty) Legendrian boundary. If they are smoothly isotopic (relative to the boundary), then there exists a sequence of convex surfaces $\Sigma_0 = \Sigma, \Sigma_1, \ldots, \Sigma_n = \Sigma'$ with the same boundary, such that $\Sigma_{i-1}$ and $\Sigma_i$ cobound a region $U_i \cong \Sigma \times I$ in $M$, and $(U_i, \xi)$ is contactomorphic to a single bypass attachment.
\end{theorem}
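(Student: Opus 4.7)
The plan is to combine Giroux's genericity theorem (Theorem \ref{teo:Giroux-Honda}) with a local analysis inside $I$-invariant neighborhoods. Start with any smooth isotopy $\{\Sigma_t\}_{t\in[0,1]}$ from $\Sigma_0=\Sigma$ to $\Sigma_1=\Sigma'$, constant near the Legendrian boundary. A parametric version of Theorem \ref{teo:Giroux-Honda}, applied to the $1$-parameter family of embeddings, allows one to perturb the isotopy rel $\partial\Sigma$ so that $\Sigma_t$ is convex for all $t$ in an open dense subset $U\subseteq[0,1]$, with only finitely many codimension-one non-convex times in the complement.

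For each $t\in U$, Giroux's $I$-invariant neighborhood model produces a contactomorphic neighborhood $(\Sigma\times(-\epsilon_t,\epsilon_t),\ker(f_t\,dt+\beta_t))$ into which every sufficiently $C^\infty$-close surface sits as a graph. By compactness of $[0,1]$, extract a finite sequence $0=t_0<t_1<\cdots<t_N=1$ with $t_i\in U$ such that $\Sigma_{t_i}$ and $\Sigma_{t_{i+1}}$ both embed as graphs inside a common $I$-invariant neighborhood of some convex surface. Inside such a local model, convex graphs are determined up to contact isotopy by their dividing sets (Theorem \ref{teo:realizacion-Giroux}), so the cobounding region is controlled by how the dividing set evolves between them. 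Refine the discretization by inserting intermediate convex graphs so that, between consecutive pairs, the dividing set changes by a single elementary move — roughly, one dividing arc being traded across a single attaching arc. By the Bypass Attachment lemma of Section \ref{sec:bypass-basics}, each such elementary change is realized by a single bypass attachment, so each refined cobounding piece $U_i$ becomes contactomorphic to exactly one bypass attachment.

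The hard part will be this final refinement. One must classify the generic codimension-one bifurcations of the dividing set in a $1$-parameter family of convex graphs inside $(\Sigma\times I,\ker(f\,dt+\beta))$, and show that each corresponds to exactly one attaching arc crossing transversally a dividing curve, i.e.\ to a single bypass attachment. The analysis must moreover be carried out rel $\partial\Sigma$: since the characteristic foliation is fixed near the boundary, the dividing set meets $\partial\Sigma$ in a fixed pattern, so every intermediate convex graph must agree with $\Sigma$ near the Legendrian boundary. This constrains where the bifurcations can be placed but does not obstruct them, reducing the theorem to the local bypass picture of Section \ref{sec:bypass-basics}.
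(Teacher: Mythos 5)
This statement is quoted in the paper as a known background result from Colin and Honda's ``Gluing tight contact structures''; the paper offers no proof of its own, so your proposal can only be judged on its internal merits. As written, it has a genuine gap, and the gap sits exactly where the content of the theorem lives.

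The problem is your compactness step: you arrange that consecutive surfaces $\Sigma_{t_i}$ and $\Sigma_{t_{i+1}}$ ``both embed as graphs inside a common $I$-invariant neighborhood of some convex surface,'' and then look for bifurcations of the dividing set among convex graphs in $(\Sigma\times I,\ker(f\,dt+\beta))$. But in an $I$-invariant model the vector field $\partial_t$ is a contact vector field transverse to \emph{every} graph, so every graph is convex with dividing set $\{f=0\}$, independent of the graph, and any two graphs are contact isotopic through graphs. There are no codimension-one bifurcations of the dividing set in this setting: if your discretization really placed each consecutive pair in a common $I$-invariant neighborhood, the conclusion would be that $\Sigma$ and $\Sigma'$ are contact isotopic with no bypasses at all, which is false in general. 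The step also cannot be carried out: the widths $\epsilon_t$ of the invariant neighborhoods degenerate as $t$ approaches one of the finitely many non-convex times $t^*$, and for $t_i<t^*<t_{i+1}$ the thin product region between $\Sigma_{t_i}$ and $\Sigma_{t_{i+1}}$ contains the non-convex slice $\Sigma_{t^*}$ and is \emph{not} an invariant neighborhood of anything. So the ``hard part'' you defer to the end is located in a framework where it literally cannot occur.

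What is actually needed, and what Giroux--Colin--Honda supply, is an analysis of the crossing of the non-convex times themselves: one shows that for a generic $1$-parameter family the characteristic foliations $\F_t$ undergo only two kinds of codimension-one degeneracies (birth--death of a pair of singularities and retrogradient saddle--saddle connections), and that crossing a retrogradient connection changes the contact germ of the product region by exactly one bypass attachment, while the other degeneracy does not change the dividing set at all. Equivalently, one proves directly that two convex surfaces cobounding an embedded (non-invariant) product region differ by a finite sequence of bypasses. Your outline never engages with either version of this statement; the Bypass Attachment Lemma of Section \ref{sec:bypass-basics} only describes the \emph{effect} of a bypass on the dividing set and does not provide the converse identification of a generic bifurcation with a bypass. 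The rel-$\partial$ discussion at the end is fine but peripheral.
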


Bypasses do not always exists, however when the complement of the surface is overtwisted their existence is guaranteed: 

\begin{lemma}[\cite{Vogel}]\label{lem:bypassAttachingArc}
    If $M\setminus \Sigma$ is overtwisted and $\gamma\subset \Sigma$ is a possible attaching arc then exists bypass for $\Sigma$ with attaching arc $\gamma$.
\end{lemma}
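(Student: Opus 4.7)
The plan is to construct the bypass by producing a candidate topological half-disk attached along $\gamma$, convexifying it, and then reducing its dividing set to the standard bypass form using the overtwistedness of the complement.

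First, I would build a smoothly embedded half-disk $D_0\subset M$ with $D_0\cap\Sigma=\gamma$ and $\partial D_0=\gamma\cup\beta_0$, where $\beta_0\subset M\setminus\Op(\Sigma)$ is a smooth arc joining the endpoints of $\gamma$. Applying the Legendrian approximation theorem in $M\setminus\Op(\Sigma)$ and then stabilizing $\beta_0$ as needed, I may take $\beta_0$ Legendrian with the contact framing expected for a standard bypass. I then convexify $D_0$ rel $\partial D_0$ via Theorem \ref{teo:Giroux-Honda}.

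Second, I would analyze the resulting dividing set $\Gamma_{D_0}$. By Kanda's theorem (Theorem \ref{thm:Kanda}) and the choice of framing above, $\Gamma_{D_0}$ meets $\partial D_0$ in exactly four points: three on $\gamma$ (coming from $\gamma\cap\Gamma_\Sigma$) and one in the interior of $\beta_0$, matching the intersection pattern of the standard bypass. If $\Gamma_{D_0}$ is the single arc characterizing the standard bypass configuration, then by Giroux's realization theorem (Theorem \ref{teo:realizacion-Giroux}) the convex half-disk $D_0$ is a bypass for $\Sigma$ along $\gamma$, and the proof is complete. Otherwise $\Gamma_{D_0}$ contains either boundary-parallel arcs or homotopically trivial closed components that must be eliminated.

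Third, I would iteratively simplify $\Gamma_{D_0}$ by attaching bypasses coming from the overtwisted region. Each homotopically trivial closed component of $\Gamma_{D_0}$ furnishes, via the Giroux Tightness Criterion (Theorem \ref{teo:Criterio-Giroux}), an explicit overtwisted neighborhood of $D_0$; from such an overtwisted disk an \emph{elementary} bypass can be extracted by the standard folding operation, entirely within the local overtwisted model and with no appeal to the lemma itself. Boundary-parallel arcs are eliminated analogously using the ambient overtwisted disk in $M\setminus\Sigma$. Via Isotopy Discretization (Theorem \ref{thm:Discretization}), each such bypass attachment strictly reduces the count of non-standard components of $\Gamma_{D_0}$, so after finitely many iterations $\Gamma_{D_0}$ reduces to the standard bypass configuration and $D_0$ becomes the desired bypass for $\Sigma$ along $\gamma$.

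The main obstacle I foresee is the third step: ensuring that each required bypass attachment for $D_0$ can be produced without circularly invoking the present lemma. This is handled by restricting to \emph{elementary} bypasses drawn canonically from explicit local overtwisted-disk models, combined with the Bypass Attachment combinatorics to track how $\Gamma_{D_0}$ simplifies under successive attachments.
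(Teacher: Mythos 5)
The paper does not actually prove this lemma --- it is quoted from Vogel --- so your proposal has to stand on its own, and it has a genuine gap in the third step, which is a circularity. To remove a homotopically trivial closed component or a boundary-parallel arc from $\Gamma_{D_0}$ you must attach to $D_0$ a bypass along a \emph{prescribed} attaching arc (one crossing the offending component in the correct pattern), and the existence of a bypass along a prescribed arc, given only overtwistedness nearby, is exactly the statement being proved. The devices you invoke to escape this do not do the job: the ``folding'' operation on an overtwisted or $I$-invariant neighborhood produces only \emph{trivial} bypasses, which by definition leave the dividing set unchanged up to isotopy and therefore cannot simplify $\Gamma_{D_0}$; and Isotopy Discretization (Theorem \ref{thm:Discretization}) runs in the wrong direction --- it decomposes an isotopy you already have into bypass attachments, it does not supply a bypass along an arc of your choosing. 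You also have no control that these hypothetical simplifying attachments keep $D_0\cap\Sigma=\gamma$ and keep the Honda arc in $M\setminus\Sigma$, both of which are essential. (The earlier steps are workable but also need care: convexifying a half-disk with two corners requires edge-rounding and a twisting computation along both $\gamma$ and $\beta_0$, and Giroux realization on a surface with piecewise Legendrian boundary is delicate.)

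For comparison, the standard argument (Honda, Vogel) is organized differently and avoids the circularity: one does not build a candidate half-disk and then clean up its dividing set. Instead one exhibits an explicit bypass inside a standard neighborhood of an overtwisted disk $\Delta\subset M\setminus\Sigma$, connects $\Delta$ to the attaching region of $\Sigma$ by a Legendrian arc, and then moves that bypass onto the prescribed arc $\gamma$ using the bypass rotation, sliding, and trivial-bypass lemmas. That transport mechanism is precisely the ingredient your step 3 is missing; as written, it assumes the lemma.
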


\begin{figure}[h]
    \centering
    \includegraphics[scale=0.35]{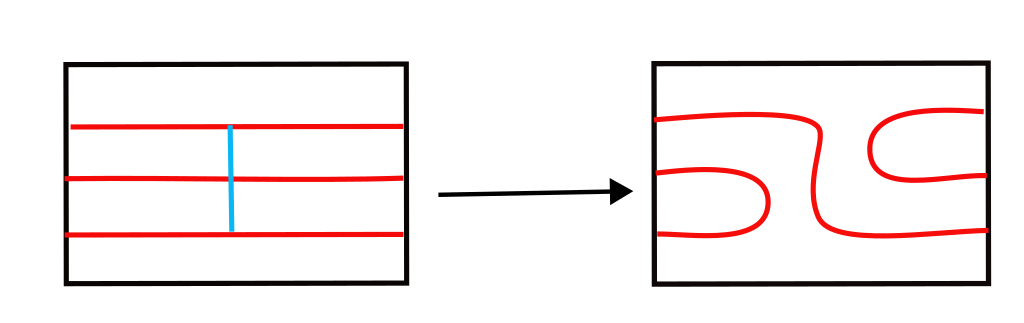}
    \caption{Result of attaching a bypass from above. The effect of attaching a bypass from below is the mirror of this figure.}
    \label{fig:bypass-2}
\end{figure}

\begin{definition}
    Let $(\Sigma\times I,\xi_{\inv})$ be an $I$-invariant contact structure and $\gamma\subseteq \Sigma$ be a Legendrian arc intersecting the dividing set in $3$ points.  A \em bypass triangle attachment along $\gamma$ \em is the composition of three bypasses
    with attaching arcs given by $\gamma$, $\gamma_1$ and $\gamma_2$, as in the Figure \ref{fig:bypass-triangle}. 
\end{definition}

\begin{figure}[h]
    \includegraphics[scale=0.28]{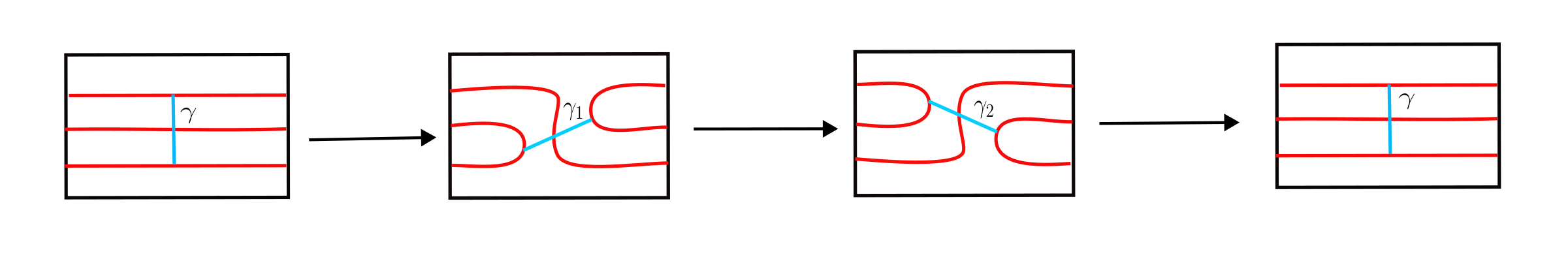}
    \caption{Bypass triangle}
    \label{fig:bypass-triangle}
\end{figure}

We will denote by $\xi_{BT}(\gamma)$ the contact structure on $\Sigma\times I$ obtained after attaching a bypass triangle along $\gamma$ from above at $\Sigma\times\{1\}$. In the cases in which the choice of attaching arc is clear from the context or not relevant we will drop $\gamma$ from the notation.

\subsubsection{The formal class of a bypass triangle}
Consider the 3-contact manifold $(\D^2\times I,\xi_{\inv})$ with $\xi_{\inv}=\ker(fdt+\beta)$, where $t\in I=[0,1]$, 
$f\in C^{\infty}(\D^2)$ and $\beta\in \Omega^1(\D^2)$. This contact structure induces a trivialization of $T(\D^2\times I)$, which defines a constant Gauss map 
$G_{\xi_{\inv}}:\D^2\times I\to \NS^2$.

Consider another plane field $\xi$ on $\D^2\times I$ such that $\xi(p)=\xi_{\inv}(p)$ for all $p \in \Op(\partial(\D^2\times I))$ and the associated Gauss map
$G_{\xi}:\D^2\times I\to \NS^2$. We will denote by $h(\xi)\in \Z$ the \em Hopf invariant \em of the map $G_\xi$. If we denote by $\FC(\D^2\times I,\partial)$ the space of formal contact structures (plane fields) on $\D^2\times I$ that coincide with $\xi_{\inv}$ near the boundary, then there is a bijection $$ \pi_0(\FC(\D^2\times I,\partial))\rightarrow \Z, \xi\mapsto h(\xi); $$ such that $h(\xi_{\inv})=0$. 

A key result that we will use is the following

\begin{theorem}[Huang \cite{Huang-o}]\label{thm:Huang-hopf}
    Consider the contact manifold $(\D^2\times I,\xi_{BT})$ obtained by attaching a bypass triangle from above, then $h(\xi_{BT})=-1$.
\end{theorem}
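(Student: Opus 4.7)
The plan is to compute $h(\xi_{BT})$ directly via the Pontryagin--Thom interpretation of the Hopf invariant. Since $\xi_{\inv}$ is $I$-invariant, the induced trivialization of $T(\D^2\times I)$ makes $G_{\xi_{\inv}}$ constant at some $v_0\in\NS^2$, so $h(\xi_{\inv})=0$ tautologically. Because $\xi_{BT}$ agrees with $\xi_{\inv}$ near $\partial(\D^2\times I)$, the relative Hopf invariant is well-defined, and for a regular value $v\in\NS^2\setminus\{v_0\}$ of $G_{\xi_{BT}}$ it is computed by the self-linking number of the framed $1$-submanifold $K_v:=G_{\xi_{BT}}^{-1}(v)\subseteq \D^2\times I$ endowed with its Pontryagin framing (equivalently, $\operatorname{lk}(K_v,K_{v'})$ for a second regular value $v'$).

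The first step is to fix an explicit contact-form model for $(\D^2\times I,\xi_{BT})$. Using Honda's local description of a bypass attachment, each of the three bypasses attached along $\gamma,\gamma_1,\gamma_2$ in Figure \ref{fig:bypass-triangle} admits a standard model as an $I$-invariant slab glued with a bypass half-disk; composing the three produces a concrete contact form on $\D^2\times I$ whose boundary data matches that of $\xi_{\inv}$. The key structural observation driving the whole computation is that this $\xi_{BT}$ must contain a canonical overtwisted disk $\Delta$ in its interior: tracking the dividing sets through the three intermediate slices, one sees a homotopically trivial dividing curve appearing at an intermediate level, which forces overtwistedness by the Giroux criterion (Theorem \ref{teo:Criterio-Giroux}) and bounds $\Delta$.

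The second step is to compute $K_v$ inside this model. Outside a small neighborhood $U$ of $\Delta$, one may isotope $\xi_{BT}$ rel boundary so that its Gauss map stays close to $v_0$ and hence avoids $v$; this confines $K_v$ to $U$. Inside $U$ we use the standard parametrization of a neighborhood of an overtwisted disk, in which the contact planes rotate by a full $2\pi$ along the radial direction of $\Delta$. In these coordinates $K_v$ appears as a single unknotted circle $K$ encircling $\Delta$, whose Pontryagin framing is canonically identified with the contact framing of $K$ viewed through the Seifert surface $\Delta$. A direct computation matches this framing with the Bennequin framing of the Legendrian boundary $\partial\Delta$, whose rotation under the $2\pi$-twist gives self-linking number $-1$.

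The main obstacle is the orientation bookkeeping in the final identification: the sign of $h(\xi_{BT})$ hinges on which side of the bypass triangle is attached from above, on the chosen orientations of $K$, $\Delta$, and the trivialization of $T(\D^2\times I)$ induced by $\xi_{\inv}$, and on the precise identification of the Pontryagin framing with the contact framing near $\Delta$. Once these conventions are aligned consistently with the $I$-invariant trivialization, the computation above yields $h(\xi_{BT})=-1$, as claimed.
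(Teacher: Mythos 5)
The paper does not prove this statement at all: it is quoted verbatim from Huang's work \cite{Huang-o}, so the only thing to compare your attempt against is that citation. Judged on its own merits, your proposal has a genuine gap and does not constitute a proof.

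The fatal step is the localization in your second stage. You assert that outside a small neighborhood $U$ of an overtwisted disk $\Delta\subseteq(\D^2\times I,\xi_{BT})$ one can homotope the Gauss map rel boundary to avoid the regular value $v$, thereby confining $K_v$ to $U$. This is not a simplification of the problem; it \emph{is} the problem. For every integer $n$ there is a plane field on $\D^2\times I$ agreeing with $\xi_{\inv}$ near the boundary with $h=n$, and (by Eliashberg's $h$-principle) each such class is realized by an overtwisted contact structure containing overtwisted disks. Hence the mere presence of $\Delta$ carries no information about $h(\xi_{BT})$, and "the Hopf invariant concentrates near the overtwisted disk" is exactly the assertion whose truth value depends on the global behaviour of the Gauss map on $(\D^2\times I)\setminus U$ relative to the germ on $\partial U$ — which you never analyze. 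The same heuristic applied to a full Lutz twist along a transverse unknot (which also inserts a standard overtwisted region) would produce a nonzero answer, whereas a full Lutz twist does not change the homotopy class of the plane field; this shows the local model near $\Delta$ alone cannot output the integer $-1$. Relatedly, your final step explicitly defers the "orientation bookkeeping" and then declares the answer; an argument that cannot distinguish $-1$ from $+1$ or $0$ is not a computation.

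Two further points. First, your justification that $\xi_{BT}$ is overtwisted — "a homotopically trivial dividing curve appears at an intermediate level" — is not correct in the standard local model: the three intermediate convex slices of a bypass triangle attachment carry dividing sets obtained from arcs by the bypass move of Figure \ref{fig:bypass-2}, with no closed components. The overtwistedness of the bypass triangle layer is itself a theorem of Huang, deduced \emph{from} the homotopy-class computation (together with the classification of tight structures on such products), not available as an input to it. Second, the Pontryagin--Thom framing of your first stage is a reasonable starting point and is in the spirit of Huang's actual argument, but Huang's proof proceeds by an explicit model of each of the three bypass layers and a careful global tracking of the Gauss map across all of $\D^2\times I$, not by localizing near an overtwisted disk. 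If you want to salvage your approach, you must either carry out that global computation or find an independent argument pinning down the framed cobordism class of $G_{\xi_{BT}}^{-1}(v)$ in the whole product, boundary conditions included.
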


\subsection{Formal $\F$-embeddings}\label{sec:$F$-emb}

All the contents presented in this section are based on \cite{F-SOT}, to which the reader is referred for a detailed and rigorous exposition of the relevant material.

Let $\Sigma$ be a compact, connected, and oriented surface. Consider a contact manifold $(\Sigma \times \mathbb{R}, \eta)$, where $\eta$ is an $\mathbb{R}$-invariant contact structure.  The surface $\Sigma = \Sigma \times \{0\}$ is convex with respect to $\eta$ and has characteristic foliation $\F$. From now on we will write $\eta=:\xi_{\F}$.

Given a contact 3-manifold $(M, \xi)$, we define the \em space of $\mathcal{F}$-embeddings \em as the set of 
all embeddings of $\Sigma$ into $(M, \xi)$ that induce the characteristic foliation $\mathcal{F}$ on $\Sigma$. This space is denoted by $\Emb(\Sigma, \mathcal{F}, (M, \xi))$. In the case where 
$\partial \Sigma \neq \emptyset$, we impose the additional condition that all embeddings must agree in a neighborhood of the boundary $\partial \Sigma$. 

In order to state the definition of a formal $\mathcal{F}$-embedding, it is necessary to consider germs of formal isocontact embeddings in a neighborhood of the surface. Let $\varepsilon > 0$ be a small positive parameter. We define an abstract \em formal embedding \em from $\Sigma \times (-\varepsilon, \varepsilon)$ into $(M, \xi)$ as a pair $(E, F_s)$ satisfying the following conditions: 
\begin{enumerate}
    \item  $E: \Sigma \times (-\varepsilon, \varepsilon) \to M$ is a smooth embedding, and  
    \item $F_s: T(\Sigma \times (-\varepsilon, \varepsilon)) \to TM$, for $s \in [0,1]$, is a homotopy of bundle isomorphisms covering $E$, such that $F_0 = dE$.
\end{enumerate}
Under these conditions we can define the following equivalence relation:
Let $(E, F_s) : \Sigma\times (-\varepsilon, \varepsilon)\to M$ and $(E', F'_s) : \Sigma\times (-\varepsilon', \varepsilon')\to M$ be two formal embeddings, they are equivalent if there exists 
$0 <\delta < min(\varepsilon, \varepsilon')$ such that
$(E, F_s)|_{\Sigma\times(-\delta,\delta)} = (E',F'_s)|_{\Sigma\times(-\delta,\delta)}$.
Then, we refer to the equivalence class of $(E, F_s)$ under this relation 
as a \em germ of a formal embedding \em 
over $\Sigma$. With a slight abuse of notation, this equivalence class will also be denoted by $(E, F_s)$. The space of germs of formal embeddings is denoted by $\FEmb^{germ}(\Sigma,M)$

Given a contact structure $\xi$ in $\Sigma\times(-\varepsilon,\varepsilon)$, we will say that a formal embedding $(E,F_s)$ is a \emph{formal isocontact embedding} if $F_1(\xi)=\xi|_{E(\Sigma\times(-\varepsilon,\varepsilon))}$.
A formal $\F$-embedding of $\Sigma$ into $(M,\xi)$ is simply a germ of a formal isocontact embedding $(E, F_s) : (\Sigma \times (-\varepsilon, \varepsilon), \xi_{\mathcal{F}}) \to (M, \xi)$. The space of formal $\F$-embeddings is denoted by $\FEmb(\Sigma,\F,(M,\xi))$.
As explained in \cite{F-SOT}, since the space of germs of contact structures inducing the same characteristic foliation over a surface is contractible, the spaces of $\F$-embeddings of surfaces into a given contact $3$-manifold $(M,\xi)$ and germs of isocontact embeddings $(E, dE) : (\Sigma \times (-\varepsilon, \varepsilon), \xi_{\mathcal{F}}) \to (M, \xi)$ are weakly homotopy equivalent. From now on, we will use both interchangeably.

\section{Overtwisted $h$-principles}\label{sec:Overtwisted-h-principles}

We will begin this section by recalling the overtwisted $h$-principle proved by Eliashberg in \cite{EliashbergOT} and several consequences of it. We will then derive the $h$-principle with non-fixed overtwisted disk, that we call loose overtwisted $h$-principle. This is the content of Theorem \ref{teo:LooseOTh-principle} and Corollary \ref{cor:DisksWithOTDisk} and will be crucial in our proof of Theorem \ref{thm:principal}.

\subsection{Overtwisted $h$-principle}

Given a $3$-manifold we will denote by $\FC(M)$ the space of formal contact structures on $M$ and by $\CStr(M)\subseteq \FC(M)$ the subspace of contact structures. In the case that $\partial M$ is non-empty we will assume that all our formal contact structures agree near $\partial M$. Given a subset $G\subseteq M$ equipped with a germ of contact structure we will denote by $\FC(M, \rel G)\subseteq \FC(M)$ the subspace of formal contact structures that induced the given germ over $G$, and we define the corresponding space of contact structures as $\CStr(M,\rel G):=\FC(M,\rel G)\cap \CStr(M)$.

\begin{theorem}[Eliashberg \cite{EliashbergOT}]\label{thm:h-OT}
Let $M$ be a closed $3$-manifold and $\D^2_{\ot} \subset M$ an embedded 2-disk
equipped with the germ of an overtwisted disk. Then, the natural inclusion
$$i : \CStr(M, \rel \D^2_{OT}) \hookrightarrow \FC(M, \rel \D^2_{OT})$$
is a weak homotopy equivalence.
\end{theorem}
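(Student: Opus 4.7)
The plan is to prove the stronger parametric statement: for every $k\geq 0$ and every map of pairs
$\Phi:(\D^k,\partial \D^k)\to (\FC(M,\rel \D^2_{\ot}),\CStr(M,\rel \D^2_{\ot}))$,
I would produce a homotopy, rel $\partial \D^k$, taking $\Phi$ into $\CStr(M,\rel \D^2_{\ot})$. This is equivalent to the asserted weak homotopy equivalence of inclusions, and the base case $k=0$ already encodes the statement that every formal contact structure on $M$ extending the given OT germ is homotopic, through formal contact structures agreeing with the germ, to a genuine one.

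First I would fix a sufficiently fine triangulation of $M$ in which $\D^2_{\ot}$ lies in the interior of a single open $3$-simplex $\sigma_0$, and extend the given family of genuine contact structures skeleton by skeleton, starting from a neighbourhood of $\D^2_{\ot}$. Over the $1$- and $2$-skeleta the extension is essentially formal: by Giroux's neighbourhood theorem (Theorem~\ref{teo:giroux-entorno}) and a Moser-type stability argument, the space of germs of contact structures refining a prescribed formal plane field near a point, an arc, or a compact surface is non-empty and weakly contractible. Applied fibrewise over $\D^k$ and proceeding inductively through the skeleta, this delivers, after a controlled homotopy of $\Phi$, a family of genuine contact structures on an open neighbourhood of the $2$-skeleton that agrees with the original data on $\partial \D^k$ and near $\D^2_{\ot}$.

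The delicate step is the extension across each open $3$-cell $\sigma\neq \sigma_0$: on the closed $3$-ball $\overline{\sigma}$ one is given a family of formal contact structures, genuine near its boundary sphere, which must be homotoped rel boundary to a genuine family. The obstruction lives in a relative homotopy group of plane fields versus contact structures on the ball, and factors through $\pi_{k+3}(\NS^2)$. The core idea, due to Eliashberg, is that $\D^2_{\ot}$ serves as a \emph{universal absorber} for this obstruction: one constructs, inside a collar of $\D^2_{\ot}$ in $\sigma_0$, a model family of contact structures realizing every element of $\pi_{k+3}(\NS^2)$ relative to the standard OT germ on the boundary, via Lutz twists and contact connected sums with overtwisted balls in the formally trivial class. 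Joining $\sigma$ to $\sigma_0$ by a tube disjoint from the $2$-skeleton and from $\partial \D^k$, one transports the obstructing class into this collar and cancels it against the model family, completing the extension to $\sigma$.

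The principal obstacle is therefore the construction of this universal absorbing model parametrically: producing contact structures on a $3$-ball with fixed standard OT boundary germ that realize every class in $\pi_{k+3}(\NS^2)$ in their formal homotopy class. In the non-parametric case $k=0$ this reduces to a classical Lutz-twist computation, but the parametric case is the technical heart of the argument and is itself proved by an inner induction on $k$, bootstrapping from the base case together with the flexibility of germs near $\D^2_{\ot}$ and a careful cancellation argument. Once this model is established, the triangulation extension above closes the outer induction on $k$ and yields the asserted weak homotopy equivalence.
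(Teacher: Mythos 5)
The paper does not prove this theorem: it is quoted as a black box from Eliashberg's work \cite{EliashbergOT}, so there is no internal proof to compare against. Judged on its own terms, your outline correctly reproduces the architecture of Eliashberg's argument --- reduce to a cell-by-cell extension using the $h$-principle for germs of contact structures near the $2$-skeleton, identify the obstruction to extending a $k$-parametric family over a $3$-cell rel boundary with a class in $\pi_{k+3}(\NS^2)$, and absorb that class into a collar of $\D^2_{\ot}$ --- but as a proof it has a genuine gap. All of the mathematical content is concentrated in the step you yourself flag as the ``technical heart'': the parametric construction, on a $3$-ball with fixed overtwisted boundary germ, of families of genuine contact structures realizing \emph{every} class in $\pi_{k+3}(\NS^2)$, together with the claim that connect-summing with such a model acts on the obstruction by addition and can be performed continuously in the parameter. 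You assert this is handled ``by an inner induction on $k$ \ldots and a careful cancellation argument,'' but no such argument is given, and it does not follow formally from the $k=0$ case: for $k\geq 1$ the groups $\pi_{k+3}(\NS^2)$ contain torsion and Lutz twists alone do not obviously realize them. In Eliashberg's paper this is precisely the main lemma (extension over a ball whose boundary sphere carries an almost-standard foliation with enough ``twisting''), proved by a delicate family analysis of characteristic foliations on $2$-spheres; without supplying it, your text is a correct roadmap to the theorem rather than a proof of it.

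Two smaller points. First, a ``sufficiently fine'' triangulation cannot contain the $2$-disk $\D^2_{\ot}$ in the interior of a single $3$-simplex; you should instead use a handle or polyhedral decomposition with one distinguished large cell containing the disk, or first isotope the disk into a fixed Darboux-sized ball. Second, the claim that the space of germs of contact structures refining a prescribed formal plane field near the $2$-skeleton is weakly contractible is not a consequence of Theorem~\ref{teo:giroux-entorno} plus Moser stability alone; it is Gromov's parametric $h$-principle for contact structures on open manifolds, and you should invoke it as such.
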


For contactomorphisms we have the analogous result as observed by Dymara in \cite{Dymara} for the case of the $3$-sphere and in general by Fern\'andez in \cite{F-SOT} 
\begin{theorem}\label{thm:ContOT}
    Let $(M, \xi)$ be an overtwisted contact 3-manifold and $\Delta_{\ot}\subseteq (M,\xi)$ an overtwisted disk. Then, the inclusion $\Cont(M, \xi,\rel \Delta_{\ot})\hookrightarrow \FCont(M, \xi,\rel \Delta_{\ot})$ is a weak homotopy equivalence.
\end{theorem}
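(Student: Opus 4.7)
The plan is to deduce this statement from Theorem \ref{thm:h-OT} via the classical fibration trick: realize both $\Cont(M,\xi,\rel \Delta_{\ot})$ and $\FCont(M,\xi,\rel \Delta_{\ot})$ as homotopy fibers of pullback maps out of the common diffeomorphism group $\Diff(M,\rel \Op(\Delta_{\ot}))$, and then transport the $h$-principle from the bases to the fibers.

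Concretely, I would consider the pullback evaluation
$$\Phi \colon \Diff(M,\rel \Op(\Delta_{\ot})) \longrightarrow \CStr(M,\rel \Delta_{\ot}), \qquad \varphi \longmapsto \varphi^*\xi,$$
together with its formal analogue $\Phi^f \colon \Diff(M,\rel \Op(\Delta_{\ot})) \to \FC(M,\rel \Delta_{\ot})$ given by the same formula but regarding $\varphi^*\xi$ as a plane field, so that $\Phi^f = i\circ \Phi$ with $i$ the natural inclusion. A parametric Gray stability argument (Moser's trick carried out relative to $\Op(\Delta_{\ot})$, where the generating contact vector field vanishes) shows that $\Phi$ is a Serre fibration onto its image, with strict fiber $\Cont(M,\xi,\rel \Delta_{\ot})$ over $\xi$. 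On the formal side, the homotopy fiber of $\Phi^f$ over $\xi$ consists of pairs $(\varphi,\gamma)$ with $\gamma$ a path of plane fields from $\varphi^*\xi$ to $\xi$; such a path lifts, up to contractible choice, to a homotopy $F_s\colon TM \to \varphi^*TM$ of bundle isomorphisms with $F_0=d\varphi$ and $F_1(\xi)=\varphi^*\xi$, which identifies the homotopy fiber with $\FCont(M,\xi,\rel \Delta_{\ot})$.

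By Theorem \ref{thm:h-OT}, the inclusion $i$ is a weak homotopy equivalence; since homotopy fibers are invariant under postcomposition with a weak equivalence, the induced map
$$\Cont(M,\xi,\rel \Delta_{\ot}) \simeq \mathrm{hofib}(\Phi) \longrightarrow \mathrm{hofib}(\Phi^f) \simeq \FCont(M,\xi,\rel \Delta_{\ot})$$
is a weak homotopy equivalence, and a routine check confirms that it coincides with the natural inclusion $\Cont \hookrightarrow \FCont$. The main technical hurdle will be the clean identification of $\mathrm{hofib}(\Phi^f)$ with $\FCont$, which reduces to the fact that the space of bundle-isomorphism lifts of a given homotopy of $2$-plane fields, with fixed initial value $d\varphi$, is contractible; a secondary but routine point is verifying the Serre fibration property of $\Phi$ via the parametric Moser argument carried out in the complement of $\Op(\Delta_{\ot})$.
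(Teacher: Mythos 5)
The paper does not actually prove Theorem \ref{thm:ContOT}: it is quoted from the literature (Dymara for $\NS^3$, Fern\'andez \cite{F-SOT} in general), so there is no in-text argument to compare against. Your fibration argument is, however, precisely the standard derivation used in those references, and it is sound. The diagram you want is the map of (homotopy) fiber sequences over $\Diff(M,\rel\Op(\Delta_{\ot}))$ induced by $\Phi$ and $\Phi^f=i\circ\Phi$, followed by the five lemma, using that $i$ is a weak equivalence by Theorem \ref{thm:h-OT}. Two points deserve to be made explicit. First, for the strict fiber of $\Phi$ to compute the homotopy fiber over $\xi$ you need not just that $\Phi$ is a Serre fibration onto its image, but that this image is a union of path components of $\CStr(M,\rel\Delta_{\ot})$ containing the component of $\xi$; this is exactly what relative Gray stability (Moser with vanishing vector field near $\Delta_{\ot}$) gives, since any path of contact structures that is constant near $\Delta_{\ot}$ and starts in the orbit stays in the orbit. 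Second, the identification $\mathrm{hofib}(\Phi^f)\simeq\FCont(M,\xi,\rel\Delta_{\ot})$ reduces, as you say, to the contractibility of the space of lifts $F_s$ with fixed initial value $d\varphi$ of a path of plane fields under the map $\Iso(TM,\varphi^*TM)\to\Distr(M)$, $F\mapsto F^{-1}(\varphi^*\xi)$; this map is a fibration onto a union of path components (the gauge group acts transitively on each homotopy class of plane fields), and the space of lifts of a path with prescribed initial point under a fibration is contractible, so this is indeed routine. The only caveat worth recording is that Theorem \ref{thm:h-OT} is stated for closed $M$, so your proof as written establishes the closed case (which is all the paper uses); the bounded case needs the relative-boundary version of Eliashberg's $h$-principle.
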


A smooth embedding $e: \D^2 \to (M, \xi)$ is an \emph{overtwisted disk embedding} if the characteristic foliation it induces coincides with that of the overtwisted disk illustrated in Figure \ref{fig:disk-OT}. The \emph{space of overtwisted disk embeddings}, denoted by $\Emb_{\ot}(\mathbb{D}^2, (M, \xi))$, plays a particularly significant role in studying the homotopy type of the contactomorphism group of $(M, \xi)$; see \cite{Dymara, F-SOT, VogelOvertwisted} for further details.

\begin{figure}[h]
    \centering
    \includegraphics[scale=0.3]{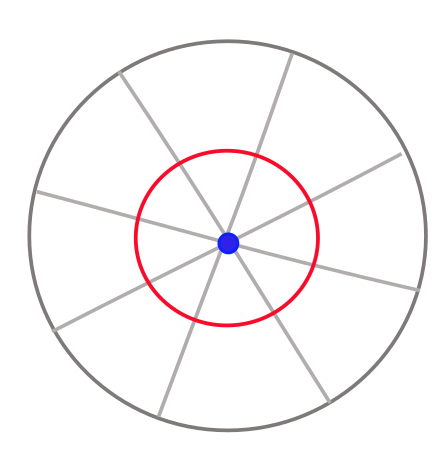}
    \caption{Overtwisted disk. The dividing set is depicted in red and the characteristic foliation in gray.}
    \label{fig:disk-OT}
\end{figure}

Let $\CFra(M, \xi)$ denote the total space of the bundle of contact frames over the contact 3-manifold $(M, \xi)$. The following result can be proved by using Theorem \ref{thm:h-OT}. At the level of path-connected components in $\NS^3$ this was observed by Dymara in \cite{Dymara}. A proof in full generality, that is independent from Theorem \ref{thm:h-OT}, was recently obtained by Farias-Fern\'andez in \cite{F-F}.

\begin{theorem}\label{teo:OvertwistedDisks}
   Let $(M,\xi)$ be an overtwisted contact $3$-manifold and $\Delta_{\ot}\subseteq (M,\xi)$ an overtwisted disk. Then, the $1$-jet evaluation map at the origin 
   $$ \ev_0:\Emb_{\ot}(\D^2,(M\backslash\Delta_{\ot},\xi))\rightarrow \CFra(M\backslash \Delta_{\ot},\xi) $$
   is a weak homotopy equivalence. In particular, two disjoint overtwisted disks are contact isotopic.
\end{theorem}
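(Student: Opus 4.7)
The plan is to derive this weak equivalence from Eliashberg's $h$-principle (Theorem \ref{thm:h-OT}) by identifying $\CFra(M \setminus \Delta_{\ot}, \xi)$ with the formal analog of the overtwisted disk embedding space. First I would set $N := M \setminus \Delta_{\ot}$; since overtwistedness is $C^0$-open, a small perturbation of $\Delta_{\ot}$ produces an overtwisted disk $\Delta' \subset N$, so $(N,\xi)$ is itself overtwisted and the $h$-principles of Section \ref{sec:Overtwisted-h-principles} apply inside $N$ relative to $\Delta'$.

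Following Section \ref{sec:$F$-emb}, I would introduce the space $\FEmb_{\ot}(\D^2, N)$ of formal overtwisted disk embeddings, i.e., germs of pairs $(E, F_s)$ with $E : \D^2 \times (-\varepsilon, \varepsilon) \hookrightarrow N$ a smooth embedding and $F_s$ a homotopy of bundle isomorphisms from $dE$ to an isomorphism pulling $\xi$ back to the germ $\xi_{\F}$ of contact structure of the standard overtwisted disk. The $1$-jet at the origin defines a map $\widetilde{\ev}_0 : \FEmb_{\ot}(\D^2, N) \to \CFra(N)$ which I expect to be a weak equivalence: once the target contact frame is fixed, admissible embedding germs deformation-retract to a point via straight-line contraction on higher jets (orientations pin down the transverse normal direction), while bundle-isomorphism homotopies with fixed endpoints form a contractible path space. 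Since $\ev_0$ factors as $\Emb_{\ot}(\D^2, N) \hookrightarrow \FEmb_{\ot}(\D^2, N) \xrightarrow{\widetilde{\ev}_0} \CFra(N)$, it will suffice to show the first arrow is a weak equivalence.

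To this end, given a family of formal embeddings $(E_\alpha, F_{\alpha,s})$, $\alpha \in S^k$, I would transport $\xi_{\F}$ via $F_{\alpha,1}$ and $E_\alpha$ to obtain a family of contact germs $\xi'_\alpha$ along the images $E_\alpha(\D^2)$, extending to contact structures on $N$ that agree with $\xi$ outside a neighborhood of each image and that keep $\Delta'$ untouched. The parametric version of Theorem \ref{thm:h-OT} applied rel $\Delta'$ would then produce a family of homotopies from $\xi'_\alpha$ back to $\xi$, which via Gray stability integrates to a family of ambient contact isotopies of $N$ straightening $F_{\alpha,s}$ to the constant homotopy $dE_\alpha$; this converts each $(E_\alpha, F_{\alpha,s})$ into a genuine overtwisted disk embedding, proving the $h$-principle. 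The ``in particular'' statement about disjoint overtwisted disks then follows from $\pi_0$-surjectivity of $\ev_0$, since any two contact frames in $\CFra(N)$ are joined by a path.

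The main obstacle I anticipate is rigorously setting up the parametric use of Theorem \ref{thm:h-OT} in the last step: one must carefully define the auxiliary family of contact structures on $N$ so that it agrees with $\xi$ outside a varying neighborhood of $E_\alpha(\D^2)$, keeps $\Delta'$ untouched throughout the family, and remains smoothly consistent near $\partial E_\alpha(\D^2)$, and then arrange the Gray stability deformation to respect this boundary data. Once these technicalities are handled, the reduction from genuine to formal overtwisted disk embeddings is a routine parametric $h$-principle argument, and the factorization through $\widetilde{\ev}_0$ delivers the weak equivalence.
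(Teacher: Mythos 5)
First, a point of comparison: the paper does not actually prove Theorem \ref{teo:OvertwistedDisks}. It only remarks that the statement ``can be proved by using Theorem \ref{thm:h-OT}'' and otherwise cites Dymara \cite{Dymara} (for $\pi_0$ in $\NS^3$) and Farias--Fern\'andez \cite{F-F} (for the general case, by an argument independent of the $h$-principle). Your overall strategy --- factor $\ev_0$ through a space of formal overtwisted disk embeddings, identify that formal space with $\CFra$ by soft arguments, and reduce genuine-versus-formal to Eliashberg's theorem --- is exactly the route the paper's remark points to. The first half (the weak equivalence $\widetilde{\ev}_0$) is essentially fine, although ``bundle-isomorphism homotopies with fixed endpoints form a contractible path space'' is not the right justification: only the endpoint $F_0=dE$ is pinned down, and contractibility really comes from $\D^2$ being contractible together with the contractibility of a path space with one free endpoint.

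The genuine gap is in your choice of overtwisted disk for the relative $h$-principle. You perturb $\Delta_{\ot}$ to an auxiliary disk $\Delta'$ in the \emph{interior} of $N=M\setminus\Delta_{\ot}$ and propose to apply Theorem \ref{thm:h-OT} rel $\Delta'$, ``keeping $\Delta'$ untouched throughout the family''. But a family $E_\alpha(\D^2)$, $\alpha\in S^k$, of (formal) overtwisted disk embeddings in $N$ will in general intersect $\Delta'$ --- two $2$-disks in a $3$-manifold cannot be made disjoint even generically --- so the modification of the contact structure near the images cannot be supported away from $\Delta'$. Insisting on a fixed auxiliary disk that the family avoids only computes the homotopy type of the loose subspace $\Emb_{\ot}(\D^2,N)_{\Delta'}$ (this is precisely the content of Theorem \ref{teo:LooseOTh-principle} and Corollary \ref{cor:DisksWithOTDisk}), not of the full space; the entire Section \ref{sec:Creating-isotopies-of-overtwisted-disks} of the paper exists because this distinction is substantive, and Vogel's examples show the full space of overtwisted disks in $M$ itself can fail such an $h$-principle. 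The correct move --- and the actual reason the theorem is true --- is to run the argument on the closed manifold $M$ relative to $\Delta_{\ot}$ itself: every element of $\Emb_{\ot}(\D^2,(M\setminus\Delta_{\ot},\xi))$ (and of its formal analogue) has compact image at positive distance from $\Delta_{\ot}$ by definition, so $\Delta_{\ot}$ is a fixed overtwisted disk disjoint from the whole family, and one applies Theorem \ref{thm:h-OT} in its form relative to the closed set $\Delta_{\ot}\cup\overline{\Op(E_\alpha(\D^2))}$ (needed also because the transported germ $\xi'_\alpha$ extends only to a \emph{formal} contact structure agreeing with $\xi$ away from the image, not to a genuine one as you assert). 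This also avoids invoking Theorem \ref{thm:h-OT}, which is stated for closed manifolds, on the open manifold $N$. A smaller slip: the ``in particular'' follows from $\pi_0$-\emph{injectivity} of $\ev_0$ (connectedness of the source, since $\CFra(N)$ is connected), not from surjectivity, after which one compares $\Delta_2$ with a parallel push-off of $\Delta_1$ inside $M\setminus\Delta_1$.
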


\subsection{Loose Overtwisted $h$-principle}

In the following discussion we will restrict ourselves to embedding spaces for simplicity. However, the whole discussion trivially applies to germs of embeddings as the ones treated in Section \ref{sec:$F$-emb}. A \em formal embedding \em of a manifold $N$ into a manifold $M$ is a pair $(e,F_s)$ where $e:N\rightarrow M$ is a smooth embedding, and $F_s:TN\rightarrow TM$, $s\in [0,1]$, is a homotopy of bundle monomorphisms covering $e$ such that $F_0=de$. We denote the space of formal embeddings by $\FEmb(N, M)$. We will regard the space of embeddings $\Emb(N,M)$ as a subspace of $\FEmb(N,M)$. Since for a formal embedding $(e, F_s)$, there is no restriction on $F_1$ (besides being a bundle monomorphism) the natural inclusion  
\begin{equation}\label{eq:InclusionFormal}
     i: \Emb(N, M) \hookrightarrow \FEmb(N, M)
\end{equation}  
is a weak homotopy equivalence.

Given a subgroup $G \subseteq \Diff(M)$, there is a natural action of $G$ on $\FEmb(N, M)$ defined via postcomposition, that preserves $\Emb(N,M)$ so it induces an action on this subspace. Specifically, the action is defined as $G \times \FEmb(N, M)\rightarrow \FEmb(N,M), (g, (e, F_s))\mapsto g \cdot (e, F_s) := (g \circ e, dg \circ F_s).$ A subspace $X\subseteq \FEmb(N,M)$ is said to be \em $G$-invariant \em if $G \cdot X = X$.  

\begin{definition}  
Let $(M, \xi)$ be an overtwisted contact $3$-manifold and $K$ a parameter space. A family $(e^k, F_s^k) \in \FEmb(N, M)$, $k \in K$, is said to be a \em loose family \em if there exists a family $d^k \in \Emb_{\ot}(\D^2, (M, \xi))$ such that $e^k(N) \cap d^k(\D^2) = \emptyset$. In this case, we also say that $(e^k, F_s^k)$ is \em loose with respect to \em $d^k$. 
\end{definition}  

Let $(M,\xi)$ be an overtwisted contact $3$-manifold with a fixed overtwisted disk embedding $d \in \Emb_{\ot}(\D^2, (M, \xi))$. For a subspace $X \subseteq \FEmb(N, M)$ we denote by $X_d \subseteq X$ the subspace consisting of embeddings that are loose with respect to $d$.  

Finally, given a subspace $X \subseteq \FEmb(N, M)$, the space of \em loose pairs \em is defined as  
$$\Loose X = \{(e, d) : d \in \Emb_{\ot}(\D^2, (M, \xi)), e \in X_d\}.$$

Notice that if $X\subseteq Y\subseteq \FEmb(N,M)$, then there is a natural inclusion 
$$i : \Loose X \hookrightarrow \Loose Y.$$

\begin{theorem}[Loose Overtwisted $h$-principle]\label{teo:LooseOTh-principle}\hfill\break
 Let $(M,\xi)$ be an overtwisted contact $3$-manifold. Consider two subspaces $X\subseteq Y\subseteq\FEmb(N,M)$ such that
     \begin{enumerate}[label=(\roman*)] 
         \item For every overtwisted disk $d\in \Emb_{\ot}(\D^2,(M,\xi))$ the inclusion 
          $i:X_d\hookrightarrow Y_d$ is a weak homotopy equivalence and
         \item $X$ and $Y$ are $\Cont(M,\xi)$-invariant.
     \end{enumerate}
     Then, the inclusion 
     $$ i:\Loose X\hookrightarrow \Loose Y $$ is a weak homotopy equivalence.
\end{theorem}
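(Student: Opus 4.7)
The plan is to exhibit $\Loose X$ and $\Loose Y$ as total spaces of Serre fibrations over the common base $B := \Emb_{\ot}(\D^2,(M,\xi))$, with fibers $X_d$ and $Y_d$ respectively, and then compare long exact sequences of homotopy groups. Concretely, I would consider the forgetful projections
$$\pi_X:\Loose X\to B,\qquad \pi_Y:\Loose Y\to B,\qquad (e,d)\mapsto d,$$
whose strict fibers over $d\in B$ are exactly $X_d$ and $Y_d$. The inclusion $\Loose X\hookrightarrow\Loose Y$ is then a fiber-preserving map over $B$ that restricts on each fiber to the inclusion $X_d\hookrightarrow Y_d$ appearing in hypothesis (i).

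The main step is to verify the homotopy lifting property for $\pi_X$ (and, identically, for $\pi_Y$). Given a family $d^k_t\in B$ with $k\in D^n$ and $t\in[0,1]$, together with an initial lift $(e^k_0,d^k_0)\in\Loose X$, I would produce a family of ambient contactomorphisms $\phi^k_t\in\Cont(M,\xi)$, continuous in $(k,t)$, with $\phi^k_0=\Id$ and $\phi^k_t\circ d^k_0=d^k_t$, and then set $e^k_t:=\phi^k_t\circ e^k_0$. The $\Cont(M,\xi)$-invariance in hypothesis (ii) yields $e^k_t\in X$, while the disjointness $e^k_0(N)\cap d^k_0(\D^2)=\emptyset$ is preserved by the diffeomorphism $\phi^k_t$, so $(e^k_t,d^k_t)\in\Loose X$ provides the desired lift.

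The existence of the parametric contact isotopy $\phi^k_t$ is the main technical obstacle, and is where the specific geometry of overtwisted disks is used rather than general submanifold isotopy theory. I would derive it from the parametric version of Giroux's neighborhood theorem (Theorem \ref{teo:giroux-entorno}), which produces a $(k,t)$-family of contactomorphisms between neighborhoods of $d^k_0(\D^2)$ and $d^k_t(\D^2)$ matching characteristic foliations, together with a standard cut-off and patching argument to extend such local contact isotopies globally on $M$ continuously in $(k,t)$. Once the Serre fibration property is established, hypothesis (i) supplies a weak equivalence on every fiber; the five-lemma applied to the long exact sequences of homotopy groups of $\pi_X$ and $\pi_Y$ (whose maps on the base $\pi_n(B)$ are the identity) then forces the inclusion $\Loose X\hookrightarrow\Loose Y$ to induce isomorphisms on all $\pi_n$, hence to be a weak homotopy equivalence.
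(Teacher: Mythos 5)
Your proposal is correct and follows essentially the same route as the paper: both identify the forgetful projections $\Loose X\to\Emb_{\ot}(\D^2,(M,\xi))$ and $\Loose Y\to\Emb_{\ot}(\D^2,(M,\xi))$ as Serre fibrations with fibers $X_d$ and $Y_d$, and then conclude via hypothesis (i), the identity on the base, and the five lemma on the long exact sequences in homotopy. The only difference is that the paper simply invokes the Contact Isotopy Extension Theorem together with the $\Cont(M,\xi)$-invariance from hypothesis (ii) to obtain the fibration property, whereas you re-sketch a proof of that extension theorem via Giroux's neighborhood theorem plus a cut-off and patching argument; this is more verbose but not a genuinely different argument.
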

\begin{proof}
   
    Applying the Contact Isotopy Extension Theorem, together with condition (ii), we obtain that the projection $\Loose X \to \Emb_{\ot}(\D^2, (M, \xi))$ is a fibration with fiber $X_d$, where $d \in \Emb_{\ot}(\D^2, (M, \xi))$. Similarly, the projection $\Loose Y \to \Emb_{\ot}(\D^2, (M, \xi))$ is a fibration with fiber $Y_d$.
    
    Therefore, in the following commutative diagram the rows are fibrations
    \begin{displaymath} 
    \xymatrix@M=10pt{
      X_d \ar@{^{(}->}[r] \ar@{^{(}->}[d] & \Loose X \ar[r] \ar@{^{(}->}[d] & \Emb_{\ot}(\D^2,(M,\xi) ) \ar@{^{(}->}[d] \\
     Y_d \ar@{^{(}->}[r] & \Loose Y \ar[r] &  \Emb_{\ot}(\D^2,(M,\xi)) }
    \end{displaymath} 
    
    The inclusion between the fibers is a homotopy equivalence due to condition (i) of the hypothesis, while the inclusion between the bases is simply the identity map. In particular, the latter is also a homotopy equivalence.  Therefore, by the Five Lemma on the long exact sequence of homotopy groups, it follows that the inclusion  
    $\Loose X \hookrightarrow \Loose Y$ is a weak homotopy equivalence.
\end{proof}

\subsection{Convex disks with overtwisted disks in the complement}

We will apply Theorem \ref{teo:LooseOTh-principle} to the space of embeddings of convex disks with Legendrian boundary and fixed characteristic foliation $\F$ into an overtwisted contact $3$-manifold $(M,\xi)$ and its formal counterpart. Namely, the spaces 
$$ \Emb(\D^2,\F,(M,\xi),\rel \partial)\subseteq \FEmb(\D^2,\F,(M,\xi),\rel \partial).$$

The following holds

\begin{corollary}\label{cor:DisksWithOTDisk}
    The inclusion 
    $$ \Loose \Emb(\D^2,\F,(M,\xi),\rel \partial) \hookrightarrow \Loose \FEmb(\D^2,\F,(M,\xi),\rel \partial) $$
    is a weak homotopy equivalence.
\end{corollary}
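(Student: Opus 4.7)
The plan is to apply Theorem~\ref{teo:LooseOTh-principle} with $X=\Emb(\D^2,\F,(M,\xi),\rel \partial)$ and $Y=\FEmb(\D^2,\F,(M,\xi),\rel \partial)$, viewed as subspaces of $\FEmb(\D^2,M)$. It suffices to verify the two hypotheses of that theorem. Condition (ii), the invariance of $X$ and $Y$ under $\Cont(M,\xi)$, is immediate: post-composition by a contactomorphism preserves $\xi$ and hence transports (formal) isocontact germs to (formal) isocontact germs, while the fixed germ near $\partial\D^2$ remains untouched. Thus everything reduces to condition (i).

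Condition (i) asserts that for each fixed overtwisted disk $d$, the inclusion $X_d \hookrightarrow Y_d$ of (formal) $\F$-embeddings in the complement of $d$ is a weak homotopy equivalence. My strategy is a parametric application of Eliashberg's overtwisted $h$-principle (Theorem~\ref{thm:h-OT}). Given a family $(E^k,F^k_s)_{k\in K}\in Y_d$ that is already genuine over a subcomplex $L\subseteq K$, use the formal interpolation $F^k_s$ (understood as a germ along $E^k(\D^2)$) to build a parametric formal contact structure $\tilde\xi^k$ on $M$ equal to $E^k_*\xi_\F$ on a small neighborhood $V^k$ of $E^k(\D^2)$, equal to $\xi$ outside a slightly larger neighborhood, and agreeing with $\xi$ near $d$; by construction this family is constant at $\xi$ over $L$. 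Because $d\cup V^k$ is disjoint and contains the overtwisted disk $d$, Eliashberg's $h$-principle in its parametric relative form deforms $\tilde\xi^k$ through $\FC(M,\rel d\cup V^k)$, rel $L$, to a family of genuine contact structures $\xi'^k\in\CStr(M)$ still coinciding with $E^k_*\xi_\F$ on $V^k$ and with $\xi$ near $d$. A further application of the $h$-principle, now rel $d$, then yields a continuous family of paths of genuine contact structures $\xi^{k,t}$ joining $\xi$ to $\xi'^k$, constant over $L$.

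Parametric Gray stability converts these paths into contact isotopies $\phi^{k,t}$ of $(M,\xi)$ fixing $d$ and with $(\phi^{k,t})^*\xi^{k,t}=\xi$. The end-embeddings $E'^k:=(\phi^{k,1})^{-1}\circ E^k$ are then genuine $\F$-embeddings disjoint from $d$, because $E^k_*\xi_\F=\xi'^k$ on $V^k$ and $d((\phi^{k,1})^{-1})$ takes $\xi'^k$ to $\xi$; they agree with $E^k$ over $L$. A path in $\FEmb_d$ from $(E^k,F^k_s)$ to $(E'^k,dE'^k)$ is then obtained through $\bigl((\phi^{k,t})^{-1}\circ E^k,\; d((\phi^{k,t})^{-1})\circ F^{k,t}_s\bigr)$, where $F^{k,t}_s$ is a continuous family of bundle-isomorphism interpolations realizing $E^k$ as a formal $\F$-embedding into $(M,\xi^{k,t})$; such a family exists because the space of bundle isomorphisms taking one plane field in a rank-$3$ bundle to another is connected, and the construction is constant over $L$. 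The main obstacle I anticipate is the parametric and relative form of Eliashberg's $h$-principle applied with the varying reference set $d\cup V^k$; this step requires the $h$-principle in its full weak-homotopy-equivalence strength rather than just at the level of $\pi_0$.
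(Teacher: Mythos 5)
Your overall strategy coincides with the paper's: both proofs reduce the corollary to Theorem \ref{teo:LooseOTh-principle} with $X=\Emb(\D^2,\F,(M,\xi),\rel \partial)$ and $Y=\FEmb(\D^2,\F,(M,\xi),\rel \partial)$, note that the invariance condition (ii) is immediate, and then verify condition (i). The difference is that the paper dispatches (i) by citing the analogous \cite[Proposition 3.6]{F-SOT}, while you sketch the h-principle argument yourself; your sketch is the standard one and is sound in outline, and you correctly identify that the delicate step is the parametric relative form of Eliashberg's h-principle for the varying set $d\cup V^k$ (this is precisely what the cited proposition supplies).

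One concrete point in your verification of (i) does need repair: the entire construction must be carried out relative to a neighborhood $\Op(\Lambda)$ of the Legendrian boundary, as the paper does when it applies Theorem \ref{teo:LooseOTh-principle} ``relative to a small open set $\Op(\Lambda)$'' and uses $\Cont(M,\xi,\rel \Lambda)$-invariance. As written, your paths of contact structures $\xi^{k,t}$ are only required to be constant near $d$, so the Gray isotopies $\phi^{k,t}$ may move $\Lambda$ and its neighborhood; the resulting embeddings $E'^k=(\phi^{k,1})^{-1}\circ E^k$ would then fail to agree with the fixed genuine $\F$-embedding near $\partial\D^2$, i.e.\ they would not lie in $\Emb(\D^2,\F,(M,\xi),\rel\partial)_d$. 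The fix is routine: since every $E^k$ is already genuine near the boundary, $\tilde\xi^k=\xi$ on $\Op(\Lambda)$, so one runs both applications of the h-principle and Gray stability rel $d\cup\Op(\Lambda)$ (and, in the first step, rel $d\cup V^k\cup\Op(\Lambda)$), which forces $\phi^{k,t}=\Id$ near $\Lambda$. With that amendment your argument goes through and matches the intended proof.
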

\begin{proof}
    Note that $\Emb(\D^2,\F,(M,\xi),\rel \partial)$ and $\FEmb(\D^2,\F,(M,\xi),\rel \partial)$ are subspaces of the space of germs of formal embeddings $\FEmb^{germ}(\D^2,M,\rel \partial)$, see Section \ref{sec:$F$-emb}. Let $\Lambda\subseteq (M,\xi)$ be the Legendrian boundary of the disks. Our goal is to apply Theorem \ref{teo:LooseOTh-principle} relative to a small open set $\Op(\Lambda)$, where $X=\Emb(\D^2,\F,(M,\xi),\rel \partial)$ and $Y=\FEmb(\D^2,\F,(M,\xi),\rel \partial)$.

    Clearly $X$ and $Y$ are $\Cont(M,\xi,\rel \Lambda)$-invariant. Therefore, it remains to check condition (i) in Theorem \ref{teo:LooseOTh-principle}. That is, that for every overtwisted disk embedding $d\in \Emb_{\ot}(\D^2,M)$ the inclusion $X_d\hookrightarrow Y_d$ is a weak homotopy equivalence. The proof of the latter is analogous to the one given in \cite[Proposition 3.6]{F-SOT} and the details are left to the reader. 
\end{proof}

\section{Creating isotopies of overtwisted disks}\label{sec:Creating-isotopies-of-overtwisted-disks}
Let $(M,\xi)$ be a closed overtwisted contact $3$-manifold. We will consider an isotopy of smooth disks $e^t:\D^2\hookrightarrow M$, $t\in [0,1]$, all of them coinciding near the Legendrian boundary, that we assume to be discretized. That is, there exists a finite sequence of times $ 0 = t_0 < t_1 < \ldots < t_n = 1 $ such that the image of 
$$ E_{i} : \D^2 \times [t_i, t_{i+1}]  \to (M, \xi), \quad (p,t)\mapsto e^t(p), $$ is an embedded pinched $3$-disk and $E_{i}$ is a smooth embedding away from $\Op(\partial \D^2)\times[t_i,t_{i+1}]$. We define $D_i:=\image(E_i)$. We will say that $D_i$ and $E_i$ are going upwards, resp. downwards, if $E_i$ is orientation preserving, resp. orientation reversing. A pinched $3$-disk going upwards is illustrated in Figure \ref{fig:pinched-disk}. We will further assume that $e^{t_i}$ is convex for every $i\in\{0,\ldots,n\}$ with characteristic foliation $\mathcal{F}_i$. 
\begin{figure}[h]
    \centering
    \includegraphics[scale=0.3]{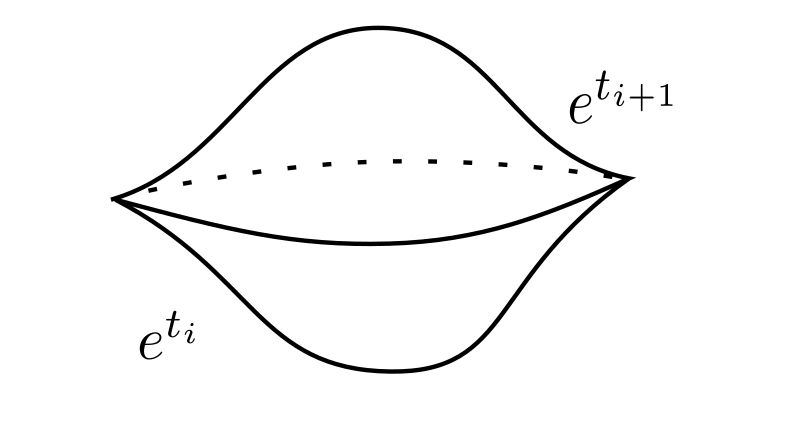}
    \caption{A pinched 3-disk}
    \label{fig:pinched-disk}
\end{figure}

The goal of this Section is to find an isotopy of overtwisted disks $\Delta^t$, $t\in[0,1]$, such that $\Delta^t\cap \image(e^t)=\emptyset$ for all $t\in [0,1]$. This will be key to apply Corollary \ref{cor:DisksWithOTDisk} to prove Theorem \ref{thm:principal} latter. We divide the argument into two cases, one in which the disks $(D_i,\xi)$ are tight and other in which they are overtwisted. We need also to interpolate between both cases, therefore we need to find isotopies of overtwisted disks with prescribed initial conditions. 

\subsection{Tight case}
First, we deal with the tight case:
    
\begin{proposition}[Tight case] 
  \label{prop:TightIsotopy}
        Let $\Delta_{\ot}$ be an overtwisted disk in the complement of $e^0$. Assume that the pinched $3$-disks $(D_i,\xi)$ are tight for every $i\in\{0,\ldots,n\}$. Then, there exists an isotopy $\Delta^t$, $t\in [0,1]$, of overtwisted disks such that 
        \begin{itemize}
            \item $\Delta^0=\Delta_{\ot}$ and 
            \item $\Delta^t\cap \image(e^t)=\emptyset$ for all $t\in[0,1]$.
        \end{itemize}
\end{proposition}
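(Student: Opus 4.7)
My plan is to construct $\Delta^t$ piecewise, using a constant overtwisted disk $\Delta_i$ on each \emph{plateau} where $e^t$ stays inside a single pinched $3$-disk $D_i$, and interpolating on short \emph{transition windows} around each $t_i$. The construction rests on two ingredients: the tightness of $D_i$ will force the complement $M\setminus D_i$ to be overtwisted, providing the plateau disks; and in each transition window, two overtwisted disks disjoint from $e^{t_i}(\D^2)$ will be shown to be contact isotopic through overtwisted disks also avoiding $e^{t_i}(\D^2)$, by a localized application of Theorem~\ref{teo:OvertwistedDisks}.

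Step 1 (plateaus). For each $i\in\{0,\ldots,n-1\}$ I would first produce an overtwisted disk $\Delta_i\subseteq M\setminus D_i$. After rounding the pinch of $\partial D_i$ along $\Lambda=\partial\D^2$ and invoking Giroux--Honda (Theorem~\ref{teo:Giroux-Honda}), the boundary of $D_i$ becomes a convex $2$-sphere in $M$. If $M\setminus D_i$ were tight, then Colin's gluing theorem (Theorem~\ref{Colin97}) combined with the hypothesised tightness of $D_i$ would force $(M,\xi)$ to be tight, contradicting our assumption; hence $M\setminus D_i$ must contain an overtwisted disk $\Delta_i$. Since $e^t(\D^2)\subseteq D_i$ for every $t\in[t_i,t_{i+1}]$, the disk $\Delta_i$ is automatically disjoint from $e^t(\D^2)$ on the entire plateau.

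Step 2 (transitions and concatenation). Set $\Delta_{-1}:=\Delta_{\ot}$ and fix a small $\varepsilon>0$. For each transition time $t_i$ the two disks $\Delta_{i-1},\Delta_i$ are disjoint from the compact set $e^{t_i}(\D^2)$, so I can choose an open neighborhood $U_i\supset e^{t_i}(\D^2)$ avoided by both; by shrinking $\varepsilon$ I may further arrange that $e^t(\D^2)\subseteq U_i$ for every $t\in[t_i-\varepsilon,t_i+\varepsilon]\cap[0,1]$. The complement $(M\setminus\overline{U_i},\xi)$ is then an overtwisted contact manifold (with boundary) containing both $\Delta_{i-1}$ and $\Delta_i$, and Theorem~\ref{teo:OvertwistedDisks} provides a contact isotopy of overtwisted disks connecting them while staying inside $M\setminus\overline{U_i}$. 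Reparametrizing this isotopy by $t$ in the transition window gives $\Delta^t$ there; setting $\Delta^t:=\Delta_i$ on each plateau $[t_i+\varepsilon,t_{i+1}-\varepsilon]$ and concatenating yields the desired isotopy $\Delta^t$, $t\in[0,1]$, with $\Delta^0=\Delta_{\ot}$ and $\Delta^t\cap\image(e^t)=\emptyset$ for all $t$.

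I expect the main obstacle to lie in Step 1: the boundary of the pinched $3$-disk is genuinely singular along the Legendrian $\Lambda$, and some care is needed to round this corner and perturb to a convex sphere without destroying the tightness of $D_i$, so that Colin's theorem can be applied. The interpolation in Step 2 is essentially automatic from Theorem~\ref{teo:OvertwistedDisks}, once one verifies that this $h$-principle extends to overtwisted manifolds with compact boundary (equivalently, that the isotopy of overtwisted disks can be taken compactly supported away from the boundary), which is standard.
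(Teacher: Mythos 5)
Your Step 1 is sound and is close in spirit to what the paper does (the paper likewise takes the spheres $\partial\Op(\image(e^t))$, notes they are tight by the hypothesis on the $D_i$'s, and uses Colin's Theorem \ref{Colin97} to control the two sides). The genuine gap is in Step 2. To join $\Delta_{i-1}$ to $\Delta_i$ inside $M\setminus\overline{U_i}$ you invoke Theorem \ref{teo:OvertwistedDisks}, but that theorem classifies overtwisted disks in the complement of a \emph{fixed} overtwisted disk, and the consequence you need is only available for two \emph{disjoint} overtwisted disks. Your $\Delta_{i-1}$ and $\Delta_i$ are produced independently (one in $M\setminus D_{i-1}$, the other in $M\setminus D_i$) and there is no reason for them to be disjoint from one another; the same problem already arises at $t=0$ between the prescribed $\Delta_{\ot}$ and your $\Delta_0$. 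This cannot be dismissed by a general-position or abstract connectedness argument: the space of overtwisted disks in an overtwisted contact $3$-manifold is not known to be connected in general, and is genuinely disconnected for $(\NS^3,\xi_{-1})$ by Vogel's work, so two intersecting overtwisted disks in $M\setminus\overline{U_i}$ may simply fail to be contact isotopic there. Patching this by forcing disjointness is itself nontrivial: one would want $\Delta_i$ in the complement of a tight ball containing $D_{i-1}\cup D_i$, but tightness of the union of two tight pinched $3$-disks glued along a convex disk is exactly the kind of statement that can fail. (This disjointness bookkeeping is precisely what the paper's Lemma \ref{lema:caracol} labors over in the overtwisted case, via several parallel copies of overtwisted disks with explicit pairwise-disjointness properties.)

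The paper's proof of the tight case avoids interpolation between different overtwisted disks entirely. It perturbs the family of tight spheres $S^t=\partial\Op(\image(e^t))$ to a family of convex spheres $\hat S^t$ with \emph{constant} characteristic foliation (possible because a tight convex sphere has connected dividing set), and then applies the Contact Isotopy Extension Theorem to get a global contact isotopy $\varphi^t$ with $\varphi^0=\Id$, $\varphi^t(\hat S^0)=\hat S^t$ and $\varphi^t=\Id$ near $\Lambda$. Setting $\Delta^t:=\varphi^t(\Delta_{\ot})$ transports the single given overtwisted disk in the exterior of $\hat S^t$ while $\image(e^t)$ remains in the interior, so disjointness is automatic for all $t$ and the prescribed initial condition is met by construction. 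To repair your argument you would need either to prove that the consecutive disks can be chosen pairwise disjoint, or to replace the interpolation step by such an ambient transport.
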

\begin{proof}
Consider a $1$-parameter family of smooth $3$-disks $B^t=\Op(\image(e^t))$, $t\in[0,1]$. We take $B^0$ small enough so that $B^0\cap \Delta_{\ot}=\emptyset$. By the hypothesis we can assume that $(B^t,\xi)$ is tight for every $t\in[0,1]$. In particular, the $2$-spheres $S^t:=\partial B^t$ are tight for every $t\in [0,1]$. Write $\Lambda=e^t(\partial \D^2)$ and notice that $S^t \cap \image(e^t)=\emptyset$ for every $t\in [0,1]$ and in particular $S^t\cap \Lambda =\emptyset$.

By \cite{FMP20}, or a small adaptation of \cite{Colin}, we can ensure the existence of a $1$-parameter family of convex spheres with the same characteristic foliation $\hat{S}^t$, $t\in [0,1]$, which is $C^0$-close to $S^t$. We can also achieve that $\hat{S}^0\cap \Delta_{\ot}=\emptyset$, $\hat{S}^t \cap \image(e^t)=\emptyset$  and $\hat{S}^t\cap \Lambda =\emptyset$ for every $t\in [0,1]$.

To conclude apply the Contact Isotopy extension Theorem to find a contact isotopy $\varphi^t\in \Cont(M,\xi)$, $t\in [0,1]$, such that $\varphi^0=\Id$, $\varphi^t(\hat{S}^0)=\hat{S}^t$ and $\varphi^t_{|\Op(\Lambda)}=\Id_{|\Op(\Lambda)}$. The isotopy of overtwisted disks is then given by $\Delta^t:=\varphi^t(\Delta_{\ot})$. 
\end{proof}

\subsection{Overtwisted case} 
Finally we address the overtwisted case. In order to do so we need some preparatory lemmas. The first one addresses the problem without prescribing the initial condition. 

\begin{lemma}\label{lema:caracol}
        Assume that the pinched $3$-disks $(D_i,\xi)$ are overtwisted for every $i\in \{0,\dots,n\}$. If $n>1$ further assume that the embedding $e^{t_i}$ is overtwisted for every $i\in \{1,\ldots, n-1\}$. Then, there exists an isotopy of overtwisted disks $\Delta^t$,$t\in [0,1]$, such that $\image(e^t)\cap \Delta^t=\emptyset$.
\end{lemma}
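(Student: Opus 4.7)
The plan is to construct $\Delta^t$ piecewise over the intervals $[t_i,t_{i+1}]$ of the discretization and glue continuously at the breakpoints $t_i$. Outside small transition windows around each $t_i$, $\Delta^t$ will be a fixed overtwisted disk placed on the side of $e^{t_i}(\D^2)$ opposite to the pinched $3$-disk $D_i$ containing $\image(e^t)$; the delicate step will be to bridge those fixed disks across each $t_i$ without ever meeting $\image(e^t)$.

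First, for each $i\in\{1,\ldots,n-1\}$ I would fix a pairwise disjoint thin bicollar $N_i$ of $e^{t_i}(\D^2)$. Since $e^{t_i}$ is overtwisted, both half-collars $N_i^{+}$ (on the $D_i$-side) and $N_i^{-}$ (on the $D_{i-1}$-side) are overtwisted, so each contains an overtwisted disk $\delta_i^{\pm}\subset N_i^{\pm}$. For the endpoints $i=0$ and $i=n$, I would use the overtwistedness of $(D_0,\xi)$ and $(D_{n-1},\xi)$ to produce overtwisted disks $\delta_0\subset D_0$ and $\delta_n\subset D_{n-1}$ located near $e^{t_0}(\D^2)$ and $e^{t_n}(\D^2)$. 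After shrinking the bicollars, $\delta_i^{-}$ and $\delta_{i+1}^{+}$ both sit outside $D_i$, hence remain disjoint from $\image(e^t)\subseteq D_i$ for all $t\in [t_i,t_{i+1}]$.

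On each bulk region $[t_i+\eta,t_{i+1}-\eta]$ I would set $\Delta^t$ equal to one of these fixed disks, which is valid by the previous step. The transition across each breakpoint $t_i$ on $[t_i-\eta,t_i+\eta]$ must connect the disk $\delta_i^{+}$ (used just before $t_i$) to $\delta_i^{-}$ (used just after). Both are disjoint overtwisted disks in the closed overtwisted manifold $(M,\xi)$, so Theorem \ref{teo:OvertwistedDisks}, applied with a third auxiliary overtwisted disk placed far from $N_i$ as the fixed obstacle, implies they lie in the same path component of the corresponding space of overtwisted disks and are thus connected by a contact isotopy. The ``caracol'' (snail) terminology suggests realizing this isotopy as a path of overtwisted disks that loops around the Legendrian boundary $\Lambda=\partial e^{t_i}(\D^2)$ from $N_i^{+}$ to $N_i^{-}$, confined to a neighborhood of $e^{t_i}(\D^2)\cup\Lambda$ disjoint from the thin tube swept by $\image(e^t)$ for $|t-t_i|\leq \eta$.

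The main obstacle is precisely this transition step, where one must secure at once both the existence of a path through overtwisted disks and its uniform avoidance of the time-varying sweep. The former is a connectivity question: the canonical orientation carried by overtwisted disks places them in the same component of the relevant contact frame bundle, so Theorem \ref{teo:OvertwistedDisks} yields a path. The latter I would enforce by taking the bicollars $N_i$ and the windows $\eta$ small enough that the one-parameter family $\{\image(e^t):|t-t_i|\leq \eta\}$ is concentrated in a thin middle slice of $N_i$; a caracol path winding around $\Lambda$ (staying near $\partial N_i$) then clears the sweep automatically. A final reparameterization glues the bulk and transition pieces into the desired family $\Delta^t$.
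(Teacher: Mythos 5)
Your overall strategy --- park a fixed overtwisted disk during the bulk of each interval $[t_i,t_{i+1}]$ and bridge at the breakpoints via Theorem \ref{teo:OvertwistedDisks} --- is close to the paper's, but two of your steps fail. The first and more fundamental gap is that you omit the front-crossing (``caracol'') maneuver, which is the actual content of the lemma. The endpoint embeddings $e^0$ and $e^1$ are \emph{not} assumed overtwisted, so on the first and last intervals (and, for $n=1$, on all of $[0,1]$) the only place an overtwisted disk can live is inside the pinched $3$-disk itself; but since $E_0$ is an embedding, every interior point of $D_0$ is hit by $\image(e^t)$ at exactly one time, so no fixed disk inside $D_0$ can serve for the whole interval --- your $\delta_0\subset D_0$ near $e^{t_0}$ is run over immediately, and $\delta_n$ near $e^{t_n}$ is run over just before $t=t_n$. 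One must let the disk sit ahead of the advancing front and, when the front comes within $2\varepsilon$ of it, isotope it to a parallel copy sitting behind the front, the isotopy taking place in the complement of the momentarily stationary slice $\image(e^{t_{i+1}-2\varepsilon})$. Relatedly, your claim that $\delta_i^-$ and $\delta_{i+1}^+$ ``sit outside $D_i$'' presupposes that consecutive pinched $3$-disks lie on opposite sides of their common face $e^{t_i}(\D^2)$. In a Colin--Honda discretization the isotopy typically reverses direction at some $t_i$ (the paper explicitly allows each $E_i$ to go upwards or downwards independently); then $D_{i-1}$ and $D_i$ lie on the same side of $e^{t_i}(\D^2)$, your labels $N_i^{\pm}$ collapse, and the disk you park for the bulk of $[t_i,t_{i+1}]$ is swept by the front. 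The paper sidesteps both problems by indexing its auxiliary disks by the time parameter inside a single embedded $\D^2\times[t_i,t_{i+1}]$ (copies near $e^{t_i\pm\varepsilon}$ and $e^{t_i\pm3\varepsilon}$), so disjointness from each slice follows from $E_i$ being an embedding regardless of orientations.

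The second gap is in the transition at $t_i$: you invoke Theorem \ref{teo:OvertwistedDisks} in $M$ minus a third, far-away disk, which produces an isotopy from $\delta_i^+$ to $\delta_i^-$ with no control whatsoever near $e^{t_i}(\D^2)$; the subsequent confinement to a neighborhood of $e^{t_i}(\D^2)\cup\Lambda$ avoiding the swept tube is asserted rather than proved, and is delicate, since the only passage between the two sides within such a neighborhood runs through a standard (hence tight, and thin) neighborhood of the Legendrian $\Lambda$. The correct move, and the one the paper makes, is to apply Theorem \ref{teo:OvertwistedDisks} with ambient manifold $M\setminus\Op(\image(e^{t_i}))$ --- connected, and overtwisted because it contains $\delta_i^{\pm}$ --- which yields an isotopy avoiding $\image(e^{t_i})$ by construction and hence, by compactness, avoiding $\image(e^t)$ for $|t-t_i|\le\eta$ with $\eta$ small.
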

\begin{proof}
We will consider several overtwisted disks near each disk $e^{t_i}$, $i\in\{1,\ldots, n-1\}$, and explain how to interpolate appropriately between them. We will do this inductively over the overtwisted pinched $3$-disks $(D_i,\xi)$. 

Since $e^{t_i}$ is convex for every $i\in \{0,1,\ldots,n\}$ and, by hypothesis, every small neighborhood $(\Op(\image(e^{t_i})),\xi)$ is overtwisted for every $i\in \{1,\ldots,n-1\}$, we can find a sufficiently small $\varepsilon>0$ and overtwisted disks $\Delta^{i,\pm}_{j}$, for $(i,j)\in \{1,\ldots,n-1\}\times \{1,3\}$, such that 
\begin{itemize}
    \item [(a)] $\Delta^{i,\pm}_{j}\subseteq \Op(\image(e^{t_i\pm j\varepsilon}))$. In particular, $\Delta^{i,-}_{j}\subseteq D_{i-1}$ and $\Delta^{i,+}_{j}\subseteq D_{i}.$
    \item [(b)] $\Delta^{i,\pm}_{j}\cap \image(e^{t_i})=\emptyset$.
    \item [(c)] $\Delta^{i,\pm}_{j}\cap \image(e^{t_i\pm 2 \varepsilon})=\emptyset$.
    \item [(d)] $\Delta^{i,+}_{1},\Delta^{i,+}_{3},\Delta^{i,-}_{1}$ and $\Delta^{i,-}_{3}$ are pairwise disjoint for every $i\in\{1,\ldots,n-1\}$.
    \item [(e)] $\Delta^{i,+}_{1},\Delta^{i,+}_{3},\Delta^{i+1,-}_{1}$ and $\Delta^{i+1,-}_{3}$ are pairwise disjoint for every $i\in \{1,\ldots,n-2\}$.
\end{itemize}
The way of finding these overtwisted disks is straightforward: we can assume that there exists a positive $\varepsilon>0$ such that the isotopies $e^{t},t\in [t_i-4\varepsilon,t_i+4\varepsilon]$, are composed of disks with fixed characteristic foliation; hence we can
consider an $I$-invariant neighborhood of $e^{t_i}$ and
an overtwisted disk 
in a $\sigma$--small neighborhood of $e^{t_i}$ with $\sigma\ll\varepsilon$ and several parallel copies of it. The last property can be ensured since all those overtwisted disks lie in the same pinched $3$-disk $D_i$. 

Now we explain how to define the required isotopy of overtwisted disks. We proceed inductively over the pinched $3$-disks $D_i$, $i\in \{0,\ldots,n-1\}$. We start with $D_0$, that is, for $t\in [0,t_1]$. 

Observe that $\Delta^{1,-}_{1}$ and $\Delta^{1,-}_{3}$ are disjoint by (d) above. Moreover, they are disjoint from $\image(e^{t_1-2\varepsilon})$ by (c). Hence, since two disjoint overtwisted disks in the connected overtwisted contact $3$-manifold $M\backslash \image(e^{t_1-2\varepsilon})$ are contact isotopic, by Theorem \ref{teo:OvertwistedDisks} we can find an isotopy of overtwisted disks $O_t$, $t\in [0,1]$, and $\delta>0$ very small such that $O_0=\Delta^{1,-}_{1}$, $O_1=\Delta^{1,-}_{3}$ and $O_t\cap \image(e^{t_1-2\varepsilon + s})=\emptyset$ for $s\in[-\delta, \delta]$.  See Figure \ref{fig:case3}.
\begin{figure}[h]
    \centering
    \includegraphics[scale=0.35]{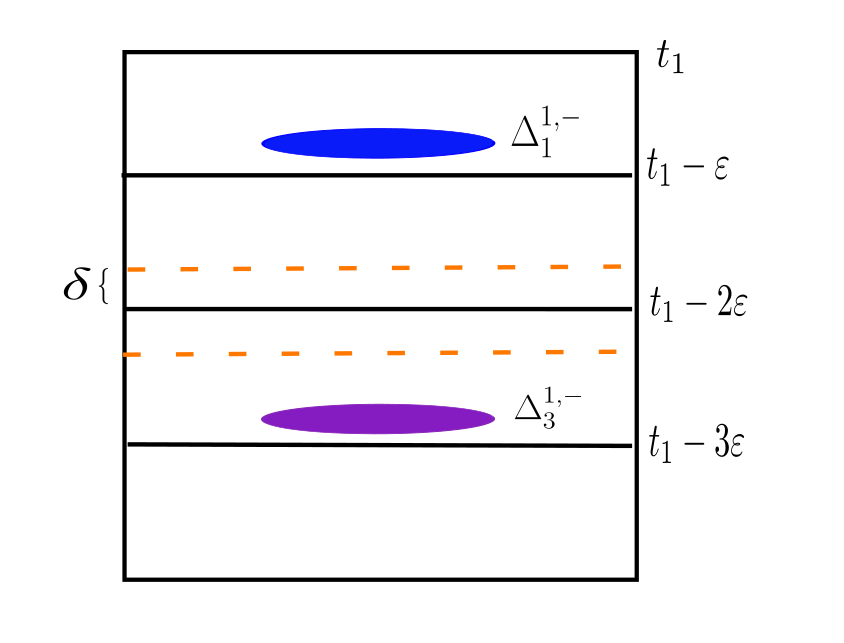}
    \caption{Schematic of the construction over $D_0$.}
    \label{fig:case3}
\end{figure}

Therefore, for $ t \in [0, t_1] $, we can define:
$$\Delta^t= \begin{cases} \Delta^{1,-}_{1} \quad t\in[0,t_1-2\varepsilon], \\
O_{\frac{t - (t_1 - 2\varepsilon)}{\delta}} \quad t\in[t_1-2\varepsilon,t_1-2\varepsilon+\delta],\\
\Delta^{1,-}_{3} \quad t\in[t_1-2\varepsilon+\delta, t_1].\end{cases}$$

This defines the required isotopy over $D_0$. To extend it over $D_1$, that is for $t\in[t_1,t_2]$, proceed as follows. First for $t\in [t_1,t_1+\delta]$ interpolate between $\Delta^{1,-}_{3}$ and $\Delta^{1,+}_{3}$. This can be done because of property (b) and (d) above. Secondly, for $t\in [t_1+\delta,t_1+2\delta]$ interpolate between $\Delta^{1,+}_{3}$ and $\Delta^{2,-}_{1}$. This can be done because of property (e). In both cases we are using Theorem \ref{teo:OvertwistedDisks}. Finally, proceed exactly as before to define the isotopy over $D_1$. Continue this process inductively to conclude. 
\end{proof}

The following key lemma will allow us to assume that if $\F_0$ and $\F_n$ are tight characteristic foliations then we can assume that both coincide after possibly adding several \em tight \em bypasses, which causes no problem due to Proposition \ref{prop:TightIsotopy}.

\begin{lemma}\label{lema:FixingDividingSet}
      Let $(\D^2_i, \Gamma_i)$, $i\in\{0,1\}$, be two convex disks with the same Legendrian boundary with tight dividing sets $\Gamma_i$ such that 
     \begin{enumerate}[label=(\roman*)]
          \item $\Gamma_0$ and $\Gamma_1$ coincide near the boundary.
          \item $-\frac{1}{2}\#(\partial\D^2_1\cap \Gamma_1)=-\frac{1}{2}\#(\partial\D^2_0\cap \Gamma_0)$ and 
          \item $\chi(\D^2_{1,+})-\chi(\D^2_{1,-})=\chi(\D^2_{0,+})-\chi(\D^2_{0,-}).$
      \end{enumerate}
      Then, there exists a sequence of tight abstract bypasses such that after attaching it to $\D^2_0$ from above turns $\Gamma_0$ into $\Gamma_1$, up to isotopy. There also exists a sequence of tight abstract bypasses turning $\Gamma_0$ into $\Gamma_1$ when attached to $\D^2_0$ from below.
\end{lemma}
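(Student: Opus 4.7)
The plan is to realize $\Gamma_0$ and $\Gamma_1$ as the dividing sets on the bottom and top convex disks of a single abstract tight contact ball, and then invoke Colin's isotopy discretization (Theorem \ref{thm:Discretization}) inside this ball to produce the required sequence of tight bypasses. First I would reduce the setup: by the Giroux Tightness Criterion (Theorem \ref{teo:Criterio-Giroux}), a tight dividing set on $\D^2$ contains no closed curves (any such curve would be homotopically trivial on the disk), so $\Gamma_0$ and $\Gamma_1$ are each a collection of $n$ properly embedded arcs sharing the same $2n$ endpoints on $\partial\D^2$, by (i) and (ii). Via Kanda's theorem (Theorem \ref{thm:Kanda}), (ii) rephrases as both diagrams inducing $\tb(\partial\D^2)=-n$, and (iii) as both inducing the same $\rot(\partial\D^2)$.

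Next, I would equip $B:=\D^2\times I$ with an abstract contact structure $\xi$ making $\D^2\times\{0\}$ convex with dividing set $\Gamma_0$, $\D^2\times\{1\}$ convex with dividing set $\Gamma_1$, and $\partial\D^2\times I$ an $I$-invariant neighborhood of $\partial\D^2$ (whose dividing set consists of $2n$ vertical arcs). Smoothing the corner circles $\partial\D^2\times\{0,1\}$ via Honda's edge-rounding turns $\partial B$ into a smooth convex sphere carrying a dividing set $\Gamma_B$ obtained by reconnecting the chords of $\Gamma_0$, $\Gamma_1$ and the vertical arcs according to the rounding shift rule. The key subclaim is that hypotheses (i)--(iii) force $\Gamma_B$ to be a single closed curve on $\NS^2$. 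Granting this, the Giroux Criterion for $\NS^2$ combined with Eliashberg's uniqueness of tight contact structures on $\B^3$ with prescribed convex boundary provides a (unique) tight structure $\xi$ on $B$ realizing the prescribed boundary data.

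Inside the tight ball $(B,\xi)$ the two convex disks $\D^2\times\{0\}$ and $\D^2\times\{1\}$ share a Legendrian boundary and are smoothly isotopic rel boundary via the natural vertical isotopy. Colin's discretization (Theorem \ref{thm:Discretization}) converts this into a finite sequence of convex disks whose consecutive pairs cobound regions contactomorphic to single bypass attachments, all tight because they lie inside $(B,\xi)$. Reading the resulting sequence from bottom to top yields the required family of tight abstract bypasses taking $\Gamma_0$ to $\Gamma_1$ when attached to $\D^2_0$ from above; the ``from below'' version follows by running the same construction with the ball placed below $\D^2_0$ instead of above.

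The principal difficulty is verifying the connectedness subclaim for $\Gamma_B$, which is exactly where hypothesis (iii) is used. A direct inclusion-exclusion computation for the positive and negative regions of $\partial B$ gives Euler characteristics $\chi(\D^2_{0,\pm})+\chi(\D^2_{1,\pm})-n$ before rounding, and Honda's edge-rounding must contribute a correction that, precisely under (iii), brings both of these to $1$, forcing $\Gamma_B$ to have a single component on $\NS^2$. An alternative route that avoids the tight-ball construction and the rounding analysis altogether is to argue directly, by induction on $n$, that any two non-crossing chord diagrams on $\D^2$ with matching endpoints and matching signature $\chi_+-\chi_-$ are related by a sequence of tight bypass moves supported on the disk itself; this is a purely combinatorial statement that follows from tracking the effect of a single bypass on a chord diagram.
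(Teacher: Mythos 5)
Your main construction has a genuine gap: the connectedness subclaim for $\Gamma_B$ is false in general, and without it there is no tight ball to discretize inside. First, the Euler-characteristic argument you propose cannot establish connectedness even in principle: on $\NS^2$ one can have $\chi(R_+)=\chi(R_-)=1$ with a disconnected dividing set (three parallel circles give $R_+=\text{disk}\sqcup\text{annulus}$ and $R_-=\text{annulus}\sqcup\text{disk}$). Second, and fatally, connectedness actually fails for admissible pairs. Take $n=3$ with endpoints $p_1,\dots,p_6$, $\Gamma_0=\{(p_1p_2),(p_3p_6),(p_4p_5)\}$ and $\Gamma_1=\{(p_1p_4),(p_2p_3),(p_5p_6)\}$: both have $\tb=-3$ and $\rot=0$, so (i)--(iii) hold. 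Honda's edge-rounding joins the endpoint $p_i$ on the bottom disk, through a vertical arc, to $p_{i\pm 1}$ on the top disk, with a fixed shift of one whose sign is determined by the coorientation; for (at least) one of the two orderings of this pair the resulting curve on $\partial B$ closes up after visiting only four endpoints ($b_1\to b_2\to t_3\to t_2\to b_1$), so $\Gamma_B$ is disconnected and no tight contact structure on $\D^2\times I$ with that boundary data exists. This does not contradict the lemma: a sequence of individually tight bypasses can have an overtwisted composite layer (cf.\ the bypass triangle), so the lemma never promises a tight product cobordism, which is exactly what your argument needs.

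Two further problems remain even where the tight ball exists. Colin--Honda discretization (Theorem \ref{thm:Discretization}) produces bypasses that may be attached alternately from above and from below, whereas the lemma requires all attachments from one fixed side; this is precisely why the paper's remark says the ``soft'' argument via Eliashberg--Fraser plus discretization only yields a mixed sequence. And your ``from below'' version, obtained by flipping the ball, corresponds to the opposite edge-rounding shift, which is the ordering for which connectedness fails in the example above. The one-sentence alternative you mention at the end --- induction on $n$ tracking the effect of a single bypass on the chord diagram --- is in fact the route the paper takes (locate a $\partial$-parallel arc of $\Gamma_1$, attach one or two explicit tight bypasses from above so that $\Gamma_0$ acquires that arc, cut along a Legendrian-realized parallel curve, and induct), but as written you have not carried it out, so the proposal does not contain a proof.
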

\begin{remark}
    This result is likely well-known, although we were unable to find a direct reference. The existence of a sequence of bypasses from both above and below, which transforms one dividing set into the other, follows immediately, after Theorem \ref{thm:Kanda}, from the Eliashberg-Fraser classification of Legendrian unknots \cite{Eliashberg-Fraser}, and the Colin-Honda discretized isotopy (Theorem \ref{thm:Discretization}) between convex surfaces with Legendrian boundary. This is based on the observation that each such disk contactly embeds into $(\mathbb{R}^3, \xi_{\std})$, as a consequence of Eliashberg's classification of tight contact structures on $\mathbb{R}^3$ \cite{EliashbergTight}.
\end{remark}
\begin{proof}
    We prove the existence of the sequence of bypasses attached from above. The other case is analogous. We proceed by induction on the number of curves in the dividing set. If there is only one curve, the disk is standard and there is nothing to prove. 
    
    Now, assume that there are $n$ curves in the dividing set, with $n > 1$. If all the curves are $\partial$-parallel (the disk is multi-standard in the terminology of \cite{FMP22}), there is nothing to prove, since there is only one possible configuration. Otherwise, let $\mathcal{P} = \{ p_1, \ldots, p_{2k} \}$ denote the set of points ordered cyclically and counterclockwise such that $\partial \D_1^2 \cap \Gamma_1 = \partial \D_0^2 \cap \Gamma_0=\mathcal{P}$. Since $n > 1$, there exists a component $\Gamma_{1,1}$ of $\Gamma_1$ that intersects the boundary into two consecutive points of $\mathcal{P}$, i.e. $\Gamma_{1,1}$ is $\partial$-parallel. Without loss of generality, suppose that these points are $p_1$ and $p_2$.
    
    Let $\Gamma_{0,0}$ denote the component of the dividing set $\Gamma_0$ that intersects the point $p_1$. This component must also intersect another point of the form $p_{2 + 2j}$, where $j \in \mathbb{Z}$. If $j=0$ there is nothing to do. There are other two cases to consider: 
    
    \begin{figure}[h]
    \centering
    \includegraphics[scale=0.3]{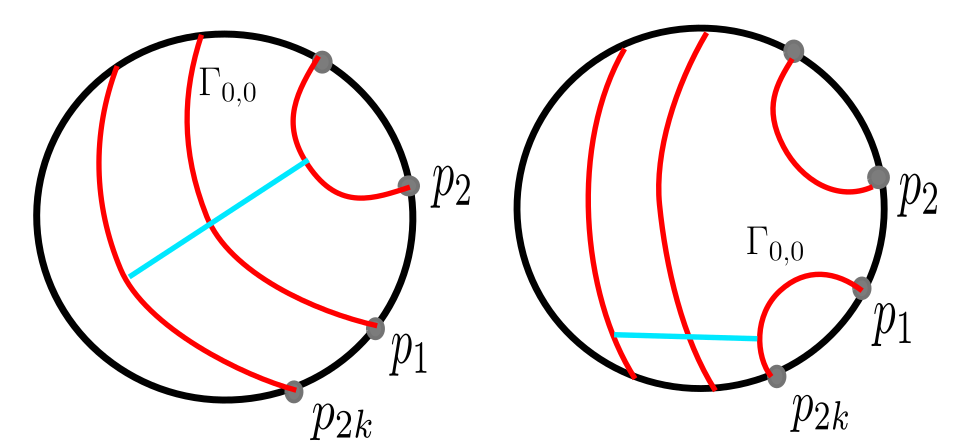}
    \caption{Possible configurations of dividing sets described in the proof. At the left it is depicted the case $p_{2 + 2j} \neq p_{2k}$ and at the right the case $p_{2 + 2j} = p_{2k}$}
    \label{fig:DividingSets.}
\end{figure}
   
    First, if $p_{2 + 2j} \neq p_{2k}$, then we consider the two components of $\Gamma_0$ that contain $p_{2k}$ and $p_2$. Notice that they must be different. Attach from above the obvious bypass whose attaching arc intersects these two components and $\Gamma_{0,0}$. After this, $\Gamma_{0,0}$ is transformed into a component that connects $p_1$ to $p_2$.

    Second, if $p_{2 + 2j} = p_{2k}$. By the hypothesis, there exists a component $\Gamma_{0,2}$ that is not $\partial$-parallel and that can be connected with $\Gamma_{0,0}$ by an embedded arc whose interior does not intersect the dividing set. We can then find another component $\Gamma_{0,k}$ in the half-disk delimited by $\Gamma_{0,2}$ that does not contain $\Gamma_{0,0}$, for which the previous arc can be extended to intersect this component in such a way that it only intersects the dividing curves at three points. Attach a bypass, with attaching arc at this previous arc. After this, $\Gamma_{0,0}$ no longer connects $p_1$ to $p_{2k}$, as in the previous case.
    
    Hence, after attaching one or two tight bypasses from above we can arrange that both dividing sets coincide on the curve connecting $p_1$ to $p_2$. Since $n>1$ we can apply Theorem \ref{teo:realizacion-Giroux} (or the Legendrian Realization Principle from \cite{Honda}) to find a parallel Legendrian arc to $\Gamma_{0,0}$, which delimits a subdisk with Legendrian boundary. This subdisk has only one component of the dividing set, namely $\Gamma_{0,0}$. Therefore, the other subdisk associated with this Legendrian curve has $n - 1 < n$ components in both dividing sets. Finally, we can use the inductive hypothesis to conclude.
\end{proof}

Now we are ready to address the case in which the pinched $3$-disks are overtwisted. In this case, we cannot find the isotopy of overtwisted disks for the given smooth isotopy $e^t$ and a priori given starting overtwisted disk, in general, but we will be able to deform our isotopy $e^t$ into a new one for which we can solve the problem. That is the content of the following

\begin{proposition}[Overtwisted case]\label{prop:OvertwistedIsotopy}
    Let $\Delta_{\ot}$ be an overtwisted disk in the complement of $e^0$. Assume that the pinched $3$-disks $(D_i,\xi)$ are overtwisted for $i\in \{0,\ldots,n-1\}$, and that the embeddings $e^0$ and $e^1$ are tight. If $n>1$ further assume that the embedding $e^{t_i}$ is overtwisted for $i\in\{1,\ldots,n-1\}$. Then, there exists a homotopy of isotopies $e^{t,u}:\D^2\hookrightarrow M$, $(t,u)\in[0,1]^2$, relative to the boundary, such that
    \begin{enumerate}[label=(\roman*)]
        \item $e^{t,u}=e^t$ for all $(t,u)\in [0,1]\times \{0\}\cup \{0,1\}\times[0,1]$,
        \item The characteristic foliation of $e^{t,1}$ is constant for $0\leq t\leq 1/2$,
        \item $e^{t,1}$, $t\in[\frac{1}{2},1]$, is a discretized isotopy composed of tight pinched $3$-disks; and
        \item There exists an isotopy of overtwisted disks $\Delta^t$, $t\in[0,1]$, such that $\Delta^0=\Delta_{\ot}$ and $\Delta^t\cap \image(e^{t,1})=\emptyset$.
    \end{enumerate}
\end{proposition}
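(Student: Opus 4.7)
The plan is to construct a new isotopy $e^{t,1}$ that coincides with $e^0$ on $[0,1/2]$ (trivially giving constant characteristic foliation $\F_0$) and is a tight discretized isotopy from $e^0$ to $e^1$ on $[1/2,1]$. Then conditions (ii) and (iii) hold, Proposition~\ref{prop:TightIsotopy} applies on the second half to yield the required $\Delta^t$, and on the first half $\Delta^t=\Delta_{\ot}$ remains fixed.

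To build the tight discretized isotopy, note that $e^0$ and $e^1$ share the Legendrian boundary $\Lambda$ and are smoothly isotopic, so Kanda's Theorem~\ref{thm:Kanda} gives that their tight dividing sets $\Gamma_0,\Gamma_n$ satisfy $\#(\Gamma_0\cap\Lambda)=\#(\Gamma_n\cap\Lambda)$ and $\chi(\D^2_{0,+})-\chi(\D^2_{0,-})=\chi(\D^2_{n,+})-\chi(\D^2_{n,-})$; they also coincide near $\partial\D^2$. By Lemma~\ref{lema:FixingDividingSet} there is a sequence of tight abstract bypass attachments transforming $\Gamma_0$ into $\Gamma_n$. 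I realize this sequence in $(M,\xi)$ as a chain of tight convex disks $\hat e_0=e^0,\hat e_1,\ldots,\hat e_k$ connected by tight pinched $3$-disks, performing each bypass within a carefully chosen tight extension of the $I$-invariant neighborhood of the current convex disk. Since $\hat e_k$ and $e^1$ share the foliation $\F_n$ and the boundary $\Lambda$, Theorem~\ref{teo:giroux-entorno} combined with the Contact Isotopy Extension Theorem yields a contact isotopy $\psi^s\in\Cont(M,\xi,\rel\Lambda)$ taking $\hat e_k$ to $e^1$; discretizing $\psi^s$ finely enough, the resulting pinched $3$-disks lie inside tight neighborhoods of the moving convex disk and are therefore tight. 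Concatenating produces the desired tight discretized isotopy $\tilde e^s$ from $e^0$ to $e^1$.

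Define $e^{t,1}=e^0$ for $t\in[0,1/2]$ and $e^{t,1}=\tilde e^{2t-1}$ for $t\in[1/2,1]$. The homotopy $e^{t,u}$, rel the endpoints $e^0$ and $e^1$, is obtained by smoothly interpolating between the two isotopies $e^t$ and $e^{t,1}$ in the space of smooth disk embeddings with fixed boundary. Finally, set $\Delta^t=\Delta_{\ot}$ for $t\in[0,1/2]$ (which is disjoint from $e^{t,1}=e^0$ by assumption) and obtain $\Delta^t$ for $t\in[1/2,1]$ by applying Proposition~\ref{prop:TightIsotopy} to $e^{t,1}|_{[1/2,1]}$ with initial overtwisted disk $\Delta_{\ot}$.

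The key technical difficulty is the realization of the abstract tight bypasses from Lemma~\ref{lema:FixingDividingSet} as tight bypasses in the overtwisted ambient manifold: Lemma~\ref{lem:bypassAttachingArc} only guarantees the existence of \emph{some} bypass with a prescribed attaching arc, not a tight one. Each bypass must therefore be chosen with its attachment region inside a locally tight region, and the fine discretization of $\psi^s$ must be arranged so every traced pinched $3$-disk remains in a tight neighborhood. Controlling tightness along the entire sequence from $\F_0$ to $\F_n$ is the main obstacle.
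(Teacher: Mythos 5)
Your overall architecture (make $e^{t,1}$ constant on $[0,1/2]$, tight and discretized on $[1/2,1]$, then feed the second half to Proposition~\ref{prop:TightIsotopy}) is a reasonable target, but two steps in your construction are genuine gaps, and the second one is where the actual content of the paper's proof lives.

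First, the step ``$\hat e_k$ and $e^1$ share the foliation $\F_n$ and the boundary $\Lambda$, so Theorem~\ref{teo:giroux-entorno} combined with the Contact Isotopy Extension Theorem yields a contact isotopy $\psi^s$ taking $\hat e_k$ to $e^1$'' is circular. Theorem~\ref{teo:giroux-entorno} only produces a contactomorphism between \emph{neighborhoods} of the two disks; it does not produce an ambient contact isotopy, and the Contact Isotopy Extension Theorem needs a path of contact embeddings as input --- which is exactly what is missing. The claim that two convex disks with the same characteristic foliation and the same Legendrian boundary are contact isotopic is (a weak form of) Theorem~\ref{thm:principal} itself, and it is false without the formal hypotheses. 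This is why the paper attaches the tight bypass sequence of Lemma~\ref{lema:FixingDividingSet} to $e^1$ \emph{inside the last overtwisted pinched $3$-disk} $(D_{n-1},\xi)$, producing a time $\hat t$ with $e^{\hat t}$ an $\F$-embedding along the original (modified-in-a-ball) path, rather than building an unrelated chain out of $e^0$ and then trying to close it up at $e^1$.

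Second, ``the homotopy $e^{t,u}$ is obtained by smoothly interpolating between the two isotopies in the space of smooth disk embeddings with fixed boundary'' is an unjustified assertion that the two paths are homotopic rel endpoints in $\Emb(\D^2,M,\rel\partial)$; for a general closed $3$-manifold this space need not be simply connected, and you give no argument. The paper never needs this: it keeps the original path on the overtwisted portion, equips it with a formal $\F$-structure, measures the Hopf-invariant obstruction $h(\eta)$ to making that formal structure genuine, kills the obstruction by inserting bypass triangles (Theorem~\ref{thm:Huang-hopf}), constructs the overtwisted disks via Lemma~\ref{lema:caracol}, and then obtains $e^{t,u}$ rel endpoints as the output of the loose $h$-principle (Corollary~\ref{cor:DisksWithOTDisk}). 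Your proposal uses none of these ingredients, which is a strong signal that the interpolation step is hiding the real difficulty. (A smaller point: your ``key technical difficulty'' about realizing tight bypasses is not actually the obstacle --- by the Bypass Attachment Lemma the contact germ on $\Sigma\times[0,1]$ is determined by the attaching arc, so any realized bypass with the prescribed arc is automatically tight; the realization itself is supplied by Lemma~\ref{lem:bypassAttachingArc} using the overtwistedness of the adjacent pinched $3$-disk, not by a ``tight extension of the $I$-invariant neighborhood,'' which contains no nontrivial bypasses at all.)
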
    

\begin{proof}
    Consider the overtwisted pinched $3$-disk $(D_{n-1},\xi)$. If this pinched $3$-disk is going downwards (resp. upwards) we can use Lemma \ref{lema:FixingDividingSet} to find a sequence of tight bypasses that after attaching them to $e^1$ from above (resp. below) turns the dividing set of $e^1$ into the one of $e^0$. Notice that we can apply this Lemma because of Theorem \ref{thm:Kanda}. Since the pinched $3$-disk $(D_{n-1},\xi)$ is overtwisted we can realize such a sequence inside this pinched $3$-disk by Lemma \ref{lem:bypassAttachingArc}. Moreover, by Giroux Realization Theorem \ref{teo:realizacion-Giroux} we may further assume that the disk obtained from $e^1$ after attaching such a sequence of bypasses has the same characteristic foliation as $e^0$. 
    
    In conclusion, after a possible smooth isotopy supported in the pinched $3$-disk $(D_{n-1},\xi)$ we may assume that there exists a time $\hat{t}\in (t_{n-1}, 1]$ such that $e^{\hat{t}}$ is convex with the same characteristic foliation than $e^0$ and such that the isotopy $e^t$, $t\in[\hat{t},1]$, decomposes as a sequence of tight pinched $3$-disks.
    
    From now on, we will focus on the isotopy $e^t$, $t\in [0,\hat{t}]$. Denote by $\F:=\F_0$ the characteristic foliation of $e^0$ and $e^{\hat{t}}$. Our goal is to use the $h$-principle for $\F$-embeddings, Corollary \ref{cor:DisksWithOTDisk}, with this isotopy. However, there may be a formal obstruction to equip the smooth isotopy $e^t$, $t\in [0,\hat{t}]$, with a formal structure and apply the $h$-principle. In the next lines we will explain how to compute such a obstruction and how to sort it out by possibly adding more tight pinched $3$-disks.  

 To compute the formal obstruction we proceed as follows. First, use use the smooth isotopy $e^t$, $t\in[0,\hat{t}]$, to extend the formal $\F$-embedding data of $e_0$ (which is actually genuine) up to $e^{\hat{t}}$; thereby obtaining a formal isotopy $(e^t, F^{t}_{s})$, $(t,s)\in[0,\hat{t}]\times [0,1]$. We need to describe the homotopy obstruction to making $F^{\hat{t}}_s$ constant. Define a plane field $\eta=\eta(p, \tau)$ in $\D^2\times[0,1]$ as follows:
 $$ \eta (p,\tau)=F^{\hat{t}}_{\tau}(\xi(p)). $$

This plane field coincides with the $I$-invariant contact structure $(\D^2\times [0,1],\xi_{\inv})$ defined by the convex characteristic foliation $\F$ over $\partial (\D^2\times[0,1])$. The obstruction to making $F^{\hat{t}}_s$ constant is precisely the obstruction to homotoping the plane field $\eta$ into $\xi_{\inv}$, relative to the boundary. To analyze this, we trivialize the tangent bundle of $\D^2\times[0,1]$ using $\xi_{\inv}$, in such a way that $\xi_{\inv}$ has constant Gauss map, which allows us to identify this obstruction with the Hopf invariant of the associated Gauss map $G_{\eta}:(\D^2\times[0,1],\partial)\rightarrow (\NS^2,p_N)$. We denote this invariant by $h(\eta)\in\Z$. 
    
If $h(\eta)=0$ then there is nothing to do to obtain this formal structure. Let us assume that $h(\eta)\neq 0$. We suppose that obvious pinched $3$-disk $\hat{D}$ bounded by $e^{t_{n-1}}$ and $e^{\hat{t}}$ is going upwards, the other case is analogous.

    \underline{If $h(\eta)<0$:} Since the pinched $3$-disk $(\hat{D},\xi)$ is overtwisted near $\image(e^{t_{n-1}})$, we may add $|h(\eta)|$ bypass triangles \em downwards \em to $e^{\hat{t}}$ (Lemma \ref{lem:bypassAttachingArc}). This allows us to find a new time $t^*\in(t^{n-1},\hat{t})$ such that, after a possible isotopy, $e^{t^*}$ is an $\F$-embedding. The obvious pinched $3$-disk between $e^{t^*}$ and $e^{\hat{t}}$ is given by a sequence of bypass triangles, in particular, it decomposes as a sequence of tight pinched $3$-disks. Finally, adding a bypass triangle downwards increases the Hopf invariant by $1$ as stated in Theorem \ref{thm:Huang-hopf}. Therefore, there is no obstruction to equip the isotopy $e^t$, $t\in[0,t^*],$ with a formal structure.  

    \underline{If $h(\eta)>0$:} We proceed similarly to the previous case. First, create a bypass triangle downwards to obtain a new $\F$-embedding $e^{\tilde{t}}$, for some $\tilde{t}\in (t_{n-1},\hat{t})$. Note that by Theorem \ref{thm:Huang-hopf} the obstruction to equip the isotopy $e^t$, $t\in[0,\tilde{t}]$, is now $h(\eta)+1$. However, the pinched $3$-disk enclosed by $e^{\tilde{t}}$ and $e^{\hat{t}}$ is overtwisted, since it is described as an embedded bypass triangle. Therefore, we may find a total of $h(G_{\eta})+1$ bypass triangles attached to $e^{\tilde{t}}$ from above inside this pinched $3$-disk. After this, it is possible to find a time $t^*\in (\tilde{t},\hat{t})$ such that, after a possible isotopy,  $e^{t^*}$ is an $\F$-embedding and the isotopy $e^t$, $t\in[0,t^*]$, admits a formal structure. Moreover, the isotopy $e^t$, $t\in [t^*, \hat{t}],$ is isotopic, relative to the boundary, to the isotopy that first interpolates back from $e^{t^*}$ to $e^{\tilde{t}}$, following the sequence of bypass triangles backwards, and then realizes obvious isotopy from $e^{\tilde{t}}$ to $e^{\hat{t}}$ given by the initial bypass triangle upwards. In particular, we may assume that it decomposes as a sequence of tight pinched $3$-disks. 

    In all the cases, we found an $\F$-embedding $e^{t^*}$, with $t^*\in(t_{n-1},\hat{t}]$, such that 
    \begin{itemize}
        \item The smooth isotopy $e^t$, $t\in[0,t^*]$, admits a formal structure and is discretized by overtwisted pinched $3$-disks.
        \item The isotopy between $e^t$, $t\in[t^*,1]$ is discretized by a sequence of \em tight \em pinched $3$-disks. 
    \end{itemize}

    We are now ready to conclude the proof. First, we apply Lemma \ref{lema:caracol} to construct an isotopy of overtwisted disks $\Delta^t$, $t\in[0,t^*],$ such that $ \image(e^t)\cap \Delta^t=\emptyset $ for all $t\in[0,t^*]$. We apply the
    $h$-principle given in Corollary \ref{cor:DisksWithOTDisk} to $(e^t,\Delta^t)$, $t\in[0,t^*]$, and perform an obvious reparametrization of the $t$-variable, to conclude the existence of a homotopy $e^{t,u}$, $(t,u)\in[0,1]^2$, satisfying properties (i), (ii) and (iii) stated above. To achieve property (iv), we simply use the Contact Isotopy Extension theorem for $t\in[0,\frac{1}{2}]$ in combination with Proposition \ref{prop:TightIsotopy} for $t\in[\frac{1}{2},1]$. This concludes the argument.
    \end{proof}

\subsection{The general case}
    We conclude this Section with the general existence statement of the isotopy of overtwisted disks that we will need. 

    \begin{theorem}\label{thm:IsotopyOvertwistedDisks}
       Let $(M,\xi)$ be a closed overtwisted contact $3$-manifold. Let $e^t:\D^2\hookrightarrow M$, $t\in[0,1]$, be an isotopy of smooth embeddings fixed near the Legendrian boundary $\Lambda=e^t(\partial \D^2)$ between two convex disks $e^0$ and $e^1$ with the same characteristic foliation $\F$. Assume that either $\F$ is tight or $\tb(\Lambda)+|\rot(\Lambda)|>-1$. Then, there exists a homotopy of isotopies $e^{t,u}:\D^2\hookrightarrow M$, $(t,u)\in [0,1]^2$, relative to the boundary, and an isotopy of overtwisted disks $\Delta^t$, $t\in [0,1]$, such that 
       \begin{itemize}
       \item [(i)] $e^{t,u}=e^t$ for $(t,u)\in [0,1]\times\{0\}\cup \{0,1\}\times[0,1]$
       \item [(ii)] $\Delta^t\cap \image(e^{t,1})=\emptyset$.
       \end{itemize}
    \end{theorem}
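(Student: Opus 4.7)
The plan is to combine Propositions \ref{prop:TightIsotopy} and \ref{prop:OvertwistedIsotopy} after suitably discretizing the isotopy $e^t$. First I would apply Theorem \ref{teo:Giroux-Honda} together with the standard isotopy discretization trick to replace $e^t$ by a smoothly homotopic discretized isotopy through convex disks $e^{t_0},\ldots,e^{t_n}$ separated by embedded pinched $3$-disks $D_0,\ldots,D_{n-1}$, each $D_i$ being either tight or overtwisted. A key preliminary observation is that whenever a tight $D_{i-1}$ is adjacent to an overtwisted $D_i$, the intermediate convex disk $e^{t_i}$ must itself be tight: the tightness of its $I$-invariant neighborhood is encoded by the dividing set, and one side of that neighborhood sits inside the tight $D_{i-1}$, forcing the whole neighborhood to be tight. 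The argument then splits into two cases according to the hypothesis.

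If $\F$ is tight, then $e^0$ and $e^1$ are tight, and so is every convex disk at the boundary between a tight block and an overtwisted block of pinched $3$-disks. I would further refine the discretization by subdividing each maximal overtwisted block at any interior convex disk which happens to be tight, thus splitting it into smaller overtwisted blocks whose endpoints are tight and whose interior convex disks are all overtwisted. Each such block then satisfies the hypotheses of Proposition \ref{prop:OvertwistedIsotopy}, which I apply to replace the block by a sequence of tight pinched $3$-disks while simultaneously producing a partial isotopy of overtwisted disks starting from a prescribed initial one. Concatenating these modifications yields a homotopy $e^{t,u}$, relative to $\{0,1\}\times[0,1]$, whose terminal isotopy $e^{t,1}$ consists entirely of tight pinched $3$-disks. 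I would then apply Proposition \ref{prop:TightIsotopy} across the remaining tight blocks, chained with the partial $\Delta^t$ already produced, to extend the isotopy of overtwisted disks over all of $[0,1]$.

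If instead $\tb(\Lambda)+|\rot(\Lambda)|>-1$, then the Bennequin-Eliashberg inequality (Theorem \ref{thm:Bennequin-Eliashberg}), applied inside any putative tight $I$-invariant neighborhood of a convex disk with boundary $\Lambda$, would force $\tb+|\rot|\leq -\chi(\D^2)=-1$, contradicting the hypothesis. Consequently every convex disk with boundary $\Lambda$ is overtwisted, so in particular every $e^{t_i}$ is overtwisted and each $D_i$ is overtwisted as well. Lemma \ref{lema:caracol} then produces the required $\Delta^t$ directly, and one takes $e^{t,u}\equiv e^t$ as the constant homotopy.

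The main obstacle is the first case, specifically the refinement step together with the gluing: one must check that after refinement every overtwisted block genuinely meets the hypotheses of Proposition \ref{prop:OvertwistedIsotopy}, and that the partial isotopies of overtwisted disks produced on consecutive blocks glue continuously at their common endpoints, while keeping the modified isotopy $e^{t,1}$ fixed at the block interfaces. Both amount to careful bookkeeping of the dividing-set types and of the prescribed initial overtwisted disk along the discretized family.
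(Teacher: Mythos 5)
Your proposal is correct and follows essentially the same route as the paper: discretize, handle the case $\tb(\Lambda)+|\rot(\Lambda)|>-1$ via Bennequin--Eliashberg and Lemma \ref{lema:caracol} with $e^{t,u}\equiv e^t$, and in the tight-$\F$ case decompose the discretized isotopy into blocks to which Propositions \ref{prop:TightIsotopy} and \ref{prop:OvertwistedIsotopy} apply iteratively, chaining the resulting isotopies of overtwisted disks. Your explicit justification that the block decomposition satisfies the propositions' hypotheses (a convex disk adjacent to a tight pinched $3$-disk is itself tight, by the Giroux criterion) is a correct filling-in of a step the paper's proof leaves implicit.
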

    \begin{proof}
       We may assume that the isotopy is discretized as at the beginning of the section. If $\tb(\Lambda)+|\rot(\Lambda)|>-1$, where the rotation is measured with respect to one the disks of the given isotopy, then, by the Bennequin-Eliashberg inequality (Theorem \ref{thm:Bennequin-Eliashberg}), $(\Op(\image(e^t)),\xi)$ is overtwisted for all $t\in [0,1]$. Hence, we can directly apply Lemma \ref{lema:caracol} to find the required isotopy of overtwisted disks $\Delta^t$, $t\in [0,1]$, and simply set $e^{t,u}\equiv e^t$.

       Assume now that $\F$ is tight, and in particular $\tb(\Lambda)+|\rot(\Lambda)|\leq-1$. Since $\F$ is tight then the discretized isotopy decomposes in smaller discretized isotopies as the ones treated in Propositions \ref{prop:TightIsotopy} and \ref{prop:OvertwistedIsotopy}. By Colin's Theorem \ref{Colin97} we may fix an overtwisted disk $\Delta_{\ot}$ in the complement of the tight disk $e^0$. Declare $\Delta^0=\Delta_{\ot}$ and apply Propositions \ref{prop:TightIsotopy} and \ref{prop:OvertwistedIsotopy} iteratively to conclude. 
    \end{proof}

    \begin{remark}\label{rmk:OTDisks}
        This Theorem is the key result that does not hold, in general, in the case in which $\tb(\Lambda)+|\rot(\Lambda)|\leq -1$ and $\F$ is overtwisted. In this case, it is not true that every discretized isotopy decomposes in smaller discretized isotopies as in Propositions \ref{prop:TightIsotopy} and \ref{prop:OvertwistedIsotopy}. The case that could appear and is not covered by these results is when there is a discretized isotopy $e^t:\D^2\rightarrow M$, $t\in [0,1]$, composed of two overtwisted pinched $3$-disks $(D_0,\xi)$ and $(D_1,\xi)$ for which $D_0\cap D_1$ has a tight neighborhood while the embeddings $e_0$ and $e_1$ are overtwisted. In this case one cannot, in general, interpolate between the overtwisted disks for $(D_0,\xi)$ and the ones for $(D_1,\xi)$ since they could intersect. However, under the stronger assumption that the space of overtwisted disks is connected, we could also sort out this case. Notice that this assumption does not always hold \cite{VogelOvertwisted} but is true for a wide class of overtwisted contact $3$-manifolds \cite{F-SOT}.
    \end{remark}

\section{Proof of Theorem \ref{thm:principal}}\label{sec:ProofMainTheorem}

We start by addressing the surjectivity statement in Theorem \ref{thm:principal}:
  
 \begin{lemma}[Surjectivity] \label{lema:pi0-sobre} \hfill\break
     Given $(e,F_s)\in \FEmb(\D^2,\F,(M,\xi),\rel\partial)$ there is a homotopy $(e^t,F_s^t)\in\FEmb(\D^2,\F,(M,\xi),\rel \partial)$, with $t\in[0,1]$ such that
    \begin{enumerate}[label=(\roman*)]
        \item $(e^0,F_s^0)=(e,F_s)$,
        \item $(e^1,F_s^1)=(e^1,de^1)\in\Emb(\D^2,\F,(M,\xi),\rel\partial)$, for all $s\in[0,1]$.
        \end{enumerate}
\end{lemma}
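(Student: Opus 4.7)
The plan is to apply Corollary~\ref{cor:DisksWithOTDisk}, which says that the inclusion $\Loose \Emb(\D^2,\F,(M,\xi),\rel \partial) \hookrightarrow \Loose \FEmb(\D^2,\F,(M,\xi),\rel \partial)$ is a weak homotopy equivalence; in particular it is surjective on $\pi_0$. To use this I must pair the given formal $\F$-embedding $(e,F_s)$ with an overtwisted disk sitting in the complement of $e(\D^2)$.

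The main preparatory step is therefore to produce an overtwisted disk $\Delta_{\ot}\subseteq (M,\xi)$ disjoint from $e(\D^2)$. First, using Theorem~\ref{teo:Giroux-Honda}, I perturb $e$ by a $C^\infty$-small isotopy (trivial near the Legendrian boundary) to a convex embedding $\tilde e$; this is permissible since $\tb(\partial\D^2)\leq 0$ by Theorem~\ref{thm:Kanda}, and the perturbation lifts to a path inside $\FEmb$ by continuously updating the formal data. Let $\tilde \F$ denote the characteristic foliation of $\tilde e$. If $\tilde \F$ is tight, an $I$-invariant neighborhood of $\tilde e$ together with a standard neighborhood of its Legendrian boundary forms a tight embedded $3$-ball whose boundary is, after a small perturbation, a convex $2$-sphere $S$; since $(M,\xi)$ is overtwisted, Colin's Theorem~\ref{Colin97} forces the exterior component of $M\setminus S$ to be overtwisted, and any overtwisted disk found there is disjoint from $\tilde e(\D^2)$. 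If instead $\tilde \F$ is overtwisted, the Giroux tightness criterion (Theorem~\ref{teo:Criterio-Giroux}) guarantees a homotopically trivial closed component of the dividing set of $\tilde e$; taking an innermost such curve, one obtains a subdisk whose dividing set is a single circle, which by the Giroux Realization Theorem~\ref{teo:realizacion-Giroux} can be realized on $\tilde e$ as an overtwisted disk. Translating this subdisk by a small amount along the contact vector field $\partial_t$ of an $I$-invariant neighborhood $\tilde e\times(-\varepsilon,\varepsilon)$ then yields an overtwisted disk $\Delta_{\ot}$ disjoint from $\tilde e(\D^2)$.

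Once $\Delta_{\ot}$ is fixed, the pair $((\tilde e,\tilde F_s),\Delta_{\ot})$ lies in $\Loose\FEmb(\D^2,\F,(M,\xi),\rel\partial)$, and surjectivity on $\pi_0$ from Corollary~\ref{cor:DisksWithOTDisk} connects it, inside $\Loose\FEmb$, to some $((e^1,de^1),\Delta^1_{\ot})\in \Loose\Emb$. Projecting onto the first coordinate and concatenating with the initial convexification path produces the desired homotopy $(e^t,F_s^t)$ in $\FEmb(\D^2,\F,(M,\xi),\rel\partial)$ from $(e,F_s)$ to the genuine $\F$-embedding $(e^1,de^1)$. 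The principal obstacle is the construction of $\Delta_{\ot}$ in the overtwisted characteristic foliation case, which is handled via the Giroux tightness criterion combined with the $I$-invariance of convex neighborhoods; all other steps are routine applications of results recalled in the preliminaries.
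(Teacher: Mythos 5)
Your proposal is correct and follows essentially the same route as the paper's proof: convexify via Giroux--Honda, split on whether the dividing set of the disk contains a closed curve (using the Giroux criterion and Colin's theorem to produce an overtwisted disk in the complement in either case), and then invoke Corollary~\ref{cor:DisksWithOTDisk}. You merely spell out in more detail the standard construction of the overtwisted disk in the overtwisted-foliation case, which the paper leaves implicit.
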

\begin{proof} Let $D=e(\D^2\times\{0\})$ and $\gamma =\partial D$. By hypothesis, we have $\tb(\gamma) \leq 0$. Using Proposition \ref{teo:Giroux-Honda}, we can assume that $D$ is convex. We can ensure the existence of an overtwisted disk $\Delta_{\ot}$ in the complement of a small neighborhood of $D$. Indeed, let $\Gamma$ be a dividing set of $D$. Then, if $\Gamma$ contains a closed curve, by Giroux's Criterion (Theorem \ref{teo:Criterio-Giroux}), an $I$-invariant neighborhood of $D$ is overtwisted and there exists an overtwisted disk in the complement of $D$. On the other hand if $\Gamma$ does not contain closed curves, by Giroux's Criterion (Theorem \ref{teo:Criterio-Giroux}) we obtain that $(e(\D^2 \times I_\varepsilon),\xi)$ is tight. Consequently, by Colin's Theorem \ref{Colin97}, $M \setminus e(\D^2 \times I_\varepsilon)$ is overtwisted. In both cases, we can assume the existence of an overtwisted disk $\Delta_{\ot}\subseteq M \setminus e(\D^2 \times I_\varepsilon)$. Now the proof follows from Corollary \ref{cor:DisksWithOTDisk}.
\end{proof}

 It remains to verify the injectivity of the Theorem \ref{thm:principal}, which reduces to proving the following:
 
\begin{lemma}[Injectivity]
    $$\pi_1 (\FEmb(\D^2,\F,(M,\xi),\rel \partial),\Emb(\D^2,\F,(M,\xi),\rel \partial))=0.$$
\end{lemma}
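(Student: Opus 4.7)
The plan is to reduce the statement to the loose overtwisted $h$-principle (Corollary~\ref{cor:DisksWithOTDisk}) via the parametric construction of isotopies of overtwisted disks given by Theorem~\ref{thm:IsotopyOvertwistedDisks}. Let $\gamma(t)=(e^t,P_s^t)$, $t\in[0,1]$, be a path in $\FEmb(\D^2,\F,(M,\xi),\rel\partial)$ with $\gamma(0),\gamma(1)\in \Emb(\D^2,\F,(M,\xi),\rel\partial)$. I will produce a homotopy rel endpoints from $\gamma$ to a path contained in $\Emb$.

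First, I will apply Theorem~\ref{thm:IsotopyOvertwistedDisks} to the underlying smooth isotopy $e^t$, which connects two convex disks with the same characteristic foliation $\F$ and therefore satisfies the hypotheses of that theorem. This yields a smooth homotopy of isotopies $e^{t,u}:\D^2\hookrightarrow M$, $(t,u)\in[0,1]^2$, rel $\partial\D^2$, with $e^{t,0}=e^t$, $e^{0,u}=e^0$, and $e^{1,u}=e^1$, together with an isotopy of overtwisted disks $\Delta^t$ satisfying $\Delta^t\cap\image(e^{t,1})=\emptyset$ for every $t$.

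Next, I will lift the smooth homotopy $e^{t,u}$ to a two-parameter family $(e^{t,u},P_s^{t,u})$ of formal $\F$-embeddings matching the given boundary data on $\partial([0,1]^2)$, i.e.\ $P_s^{t,0}=P_s^t$, $P_s^{0,u}=P_s^0$, and $P_s^{1,u}=P_s^1$. Such a lift will exist because the forgetful projection from $\FEmb(\D^2,\F,(M,\xi),\rel\partial)$ to the space of smooth embeddings of $\D^2$ in $M$ rel boundary is a Serre fibration whose fiber over a smooth embedding $e$ is a path space in the space of plane fields over $e(\D^2)$ with prescribed endpoint conditions, so the homotopy lifting property over the square applies. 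At $u=1$ the resulting path $\tilde\gamma(t):=(e^{t,1},P_s^{t,1})$ is disjoint from $\Delta^t$, so $(\tilde\gamma(t),\Delta^t)$ is a path in $\Loose\FEmb(\D^2,\F,(M,\xi),\rel\partial)$ with endpoints in $\Loose\Emb(\D^2,\F,(M,\xi),\rel\partial)$. By Corollary~\ref{cor:DisksWithOTDisk} this inclusion is a weak homotopy equivalence, hence the relative $\pi_1$ vanishes, and $(\tilde\gamma,\Delta^t)$ is homotopic rel endpoints in $\Loose\FEmb$ to a path in $\Loose\Emb$; projecting out the loose data yields a path $\gamma'\subseteq \Emb$ from $e^0$ to $e^1$. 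Concatenating the $u$-direction homotopy from $\gamma$ to $\tilde\gamma$ with the homotopy from $\tilde\gamma$ to $\gamma'$ produces the required homotopy.

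The main obstacle is the formal extension step: I need to establish that the formal data extends coherently over the square, which requires verifying the fibration structure of $\FEmb$ over the space of smooth embeddings rel boundary and checking that the fiber is path-connected enough to support the lift. Once this technical point is handled, the remaining steps are a direct combination of Theorem~\ref{thm:IsotopyOvertwistedDisks} and Corollary~\ref{cor:DisksWithOTDisk}.
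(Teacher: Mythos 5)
Your proposal is correct and follows essentially the same route as the paper: the paper's proof consists precisely of observing that Theorem \ref{thm:IsotopyOvertwistedDisks} makes $\pi_1(\Loose \FEmb,\Loose\Emb)\to\pi_1(\FEmb,\Emb)$ surjective and then quoting Corollary \ref{cor:DisksWithOTDisk} to kill the source. Your extra step of lifting the formal data over the square via the fibration $\FEmb\to\Emb^{\mathrm{smooth}}$ is exactly the (implicit) content of that surjectivity claim, so nothing is missing.
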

\begin{proof}
Write $X=\Emb(\D^2,\F,(M,\xi),\rel \partial)$ and $Y=\FEmb(\D^2,\F,(M,\xi),\rel \partial)$. Theorem \ref{thm:IsotopyOvertwistedDisks} implies that the natural map 
$$ \pi_1(\Loose Y, \Loose X)\rightarrow \pi_1(Y,X)  $$ is surjective. Since $\pi_1(\Loose Y, \Loose X)=0$ by Corollary \ref{cor:DisksWithOTDisk} the result follows.
\end{proof}

\section{Applications}\label{sec:Applications}

The purpose of this Section is to prove Corollaries \ref{cor:unknots}, \ref{cor:UnknotsS3} and \ref{cor:pi0complementounknot}. We will first prove and recall some results for overtwisted contact $3$-spheres that we will need for Corollaries \ref{cor:UnknotsS3} and \ref{cor:pi0complementounknot}. Finally, we will prove the mentioned Corollaries.

\subsection{Convex disks with Legendrian boundary in an overtwisted $3$-sphere}

We begin by recalling the following

\begin{theorem}[Hatcher \cite{Hatcher-irreducible,Hatcher}]
\label{thm:HatcherDisks}
The space $\Emb(\D^2,\NS^3,\rel \partial)$ is contractible.
\end{theorem}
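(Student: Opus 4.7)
The strategy combines Hatcher's Smale conjecture $\Diff(\D^3, \rel \partial) \simeq \ast$ from \cite{Hatcher} with the smooth Schoenflies theorem in $\NS^3$, via a fibration argument. Let $\gamma_0 := \partial \D^2 \subseteq \NS^3$ denote the fixed (unknotted) boundary curve, and let $N(\gamma_0)$ be a small tubular neighborhood matching the germ near $\partial\D^2$ fixed by every embedding in $\Emb(\D^2, \NS^3, \rel \partial)$. First I would verify path-connectedness: given $D_0, D_1$ in the embedding space, an innermost-disk argument in the simply-connected $\NS^3$ isotopes $D_1$ rel. $N(\gamma_0)$ so that $D_0 \cap D_1 = \gamma_0$. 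The union $D_0 \cup D_1$ is then a topological $2$-sphere, smooth after a $C^0$-small smoothing along $\gamma_0$, which by smooth Schoenflies bounds a $3$-ball $B \subseteq \NS^3$; a radial contraction inside $B$ provides the desired isotopy from $D_0$ to $D_1$ rel $\gamma_0$.

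For higher homotopy, I would fix $D_0$ as a reference and consider the postcomposition action of $G := \Diff(\NS^3, \rel N(\gamma_0))$ on $\Emb(\D^2, \NS^3, \rel \partial)$. Path-connectedness together with isotopy extension yield the fibration
\begin{equation*}
\Diff(\NS^3, \rel N(\gamma_0) \cup D_0) \longrightarrow G \longrightarrow \Emb(\D^2, \NS^3, \rel \partial).
\end{equation*}
Cutting $\NS^3$ along $D_0$ splits it into two closed $3$-balls $\overline{B}_\pm$, so the fiber is identified with $\Diff(\D^3, \rel \partial) \times \Diff(\D^3, \rel \partial)$, which is contractible by the Smale conjecture. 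Hence the base is weakly equivalent to $G$.

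Finally, $G$ coincides with $\Diff(\NS^3 \setminus \mathrm{int}(N(\gamma_0)), \rel \partial)$, the diffeomorphism group of the unknot complement relative to its boundary torus. Since this complement is the solid torus $\NS^1 \times \D^2$, $G$ is $\Diff(\NS^1 \times \D^2, \rel \partial)$, which is contractible by Hatcher's parametric isotopy theorem for Haken $3$-manifolds \cite{Hatcher-irreducible}. The long exact sequence of the fibration then yields $\Emb(\D^2, \NS^3, \rel \partial) \simeq \ast$. The main obstacle in this outline is precisely this last step: contractibility of $\Diff(\NS^1 \times \D^2, \rel \partial)$ is itself a deep parametric theorem requiring Hatcher's full machinery, and is morally equivalent to the Smale conjecture applied to the solid torus.
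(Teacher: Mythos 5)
The paper gives no argument for Theorem \ref{thm:HatcherDisks}; it is imported verbatim from Hatcher, where it occurs among the equivalent reformulations and direct consequences of the Smale conjecture. So your outline has to stand on its own, and it has one harmless error and one genuine gap.

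The harmless error: cutting $\NS^3$ along $N(\gamma_0)\cup D_0$ produces a \emph{single} closed $3$-ball, not two. A $2$-disk does not separate $\NS^3$; equivalently, $D_0\cap V$ is a meridian disk of the solid torus $V=\NS^3\setminus \mathrm{int}(N(\gamma_0))$, and a solid torus cut along a meridian disk is one ball. The fiber of your fibration is therefore a single copy of $\Diff(\D^3,\rel\partial)$, which is still contractible by the Smale conjecture, so nothing downstream is affected.

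The genuine gap is the last step. The solid torus is not a Haken manifold: it is a handlebody, its boundary torus is compressible, and it contains no incompressible surface other than the compressing disk, so Hatcher's parametrized theorem for Haken $3$-manifolds cannot be quoted for $\Diff(\NS^1\times\D^2,\rel\partial)$. Worse, your own fibration read in the opposite direction shows that this group is weakly equivalent to the space you are trying to contract: since the fiber $\Diff(\D^3,\rel\partial)$ is weakly contractible and the action is transitive on components by your first step, the orbit map $G\to\Emb(\D^2,\NS^3,\rel\partial)$ is a weak homotopy equivalence. Hence ``$\Diff(\NS^1\times\D^2,\rel\partial)\simeq \ast$'' is not an input you can feed into the argument but a restatement of Theorem \ref{thm:HatcherDisks} itself, and as written the proof is circular. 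The reduction is still of some value --- the contractibility of $\Diff(\NS^1\times\D^2,\rel\partial)$ is true and is one of the statements Hatcher derives alongside $\Diff(\D^3,\rel\partial)\simeq\ast$ in the Smale conjecture paper --- but it has to be cited from there (which is, in effect, exactly what the paper does by attributing the theorem to Hatcher), not deduced from the Haken machinery. If you want a non-circular derivation from results you are allowed to assume, the cleanest route is through Hatcher's theorem that the space of unknotted (framed) circles in $\NS^3$ retracts to the round ones, combined with $\Diff(\NS^3)\simeq O(4)$, which together compute $G$ directly.
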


This result together with Theorem \ref{thm:principal} allow us to conclude the following

\begin{corollary}\label{cor:DisksS3}
    Let $(\NS^3,\xi)$ be an overtwisted contact $3$-sphere and $\D^2\subseteq (\NS^3,\xi)$ an embedded convex disk with Legendrian boundary and characteristic foliation $\mathcal{F}$. Assume that either $\F$ is tight or $\tb(\partial\D^2)+|\rot(\partial \D^2)|>-1$. Then, $$ \pi_0( \Emb(\D^2,\F,(\NS^3,\xi),\rel \partial))\cong \Z. $$
\end{corollary}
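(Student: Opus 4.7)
The plan is to combine Theorem \ref{thm:principal} with Hatcher's contractibility Theorem \ref{thm:HatcherDisks} and a direct obstruction-theoretic computation.

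By Theorem \ref{thm:principal}, the inclusion induces a bijection $\pi_0(\Emb(\D^2, \F, (\NS^3, \xi), \rel \partial)) \cong \pi_0(\FEmb(\D^2, \F, (\NS^3, \xi), \rel \partial))$, so it suffices to compute the right-hand side. Consider the forgetful map $q : \FEmb(\D^2, \F, (\NS^3, \xi), \rel\partial) \to \Emb(\D^2, \NS^3, \rel\partial)$, $(e, F_s) \mapsto e$. Standard parametric arguments, extending the formal $F_s$-data along smooth isotopies in a relative fashion, show that $q$ is a Serre fibration. By Hatcher's Theorem \ref{thm:HatcherDisks}, its base is contractible; hence $\pi_0(\FEmb(\D^2, \F, (\NS^3, \xi), \rel\partial)) \cong \pi_0(q^{-1}(e_0))$ for any fixed smooth embedding $e_0$ extending the prescribed boundary germ.

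To compute the fiber, let $\mathcal{P}$ denote the space of bundle monomorphisms $T\D^2 \to T\NS^3|_{e_0(\D^2)}$ with the prescribed boundary behavior, and $\mathcal{A} \subseteq \mathcal{P}$ the subspace of those whose image plane $P_1$ satisfies $P_1 \cap \xi = \F$. Then $q^{-1}(e_0)$ is homotopy equivalent to the space of paths in $\mathcal{P}$ from the fixed point $de_0$ to the subspace $\mathcal{A}$. A pointwise analysis shows $\mathcal{A}$ is contractible: at each $p$, the admissible image planes (those containing $L_p := \xi_p \cap T_p(e_0(\D^2))$ and different from $\xi_p$) form an open arc in $\R P^1$, the framing gauge contributes an $\SO(2) \simeq \NS^1$ factor, and the resulting section space rel boundary is $\Omega^2 \NS^1 \simeq *$. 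From the long exact sequence of the pair $(\mathcal{P}, \mathcal{A})$ we deduce $\pi_0(q^{-1}(e_0)) \cong \pi_1(\mathcal{P})$. Identifying $\mathcal{P}$ with the mapping space $\operatorname{Map}_\partial(\D^2, V_2(\R^3)) \simeq \Omega^2 V_2(\R^3)$ and using $V_2(\R^3) \simeq \SO(3) \simeq \R P^3$ (covered by $\NS^3$), we obtain $\pi_1(\mathcal{P}) \cong \pi_3(\NS^3) = \Z$.

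The main obstacle is rigorously verifying the contractibility of $\mathcal{A}$ once the framing gauge is accounted for, together with the Serre fibration property of $q$; both are standard but require some care with the relevant mapping spaces. The resulting $\Z$-invariant is a Hopf-type number, consistent with the $\Z$-factor appearing in Corollary \ref{cor:UnknotsS3} via the weak homotopy equivalence $\Emb(\D^2, \F, (M, \xi), \rel \partial) \simeq \Omega \mathcal{L}_{(p,v)}(\Lambda, (M, \xi))$ from \cite{FMP22}.
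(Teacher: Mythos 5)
Your argument is correct and follows essentially the same route as the paper: reduce to the formal space via Theorem \ref{thm:principal}, fiber over the Hatcher-contractible base $\Emb(\D^2,\NS^3,\rel\partial)$, identify the fiber as a relative path space whose endpoint subspace is weakly contractible (your $\Omega^2\NS^1$ is the paper's $\Omega^2\U(1)$), and land on $\pi_3(\SO(3))\cong\Z$. The only cosmetic difference is that the paper works with germs and bundle isomorphisms, so $\GL^+(3,\R)\simeq\SO(3)$ appears directly and the pointwise analysis of admissible planes (delicate at singularities of $\F$) is absorbed into the contractibility of the space of contact germs inducing $\F$, whereas you use monomorphisms and $V_2(\R^3)$.
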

\begin{proof}
   
By Theorem \ref{thm:principal} it is enough to check that $\pi_0( \FEmb(\D^2,(\NS^3,\xi),\rel \partial))\cong \Z.$ Consider the following fibration:
    \begin{equation}\label{fibracion-Path}
        \Path_{(*, \U(1))}\operatorname{Iso}(T\NS^3|_{\D^2}, \rel\partial) \hookrightarrow \FEmb(\D^2, \F, (\NS^3, \xi), \rel \partial) \to \Emb(\D^2, \NS^3, \rel \partial),
    \end{equation}
    where the projection map is the natural forgetful map and $\Path_{(*, \U(1))}\operatorname{Iso}(T\NS^3|_{\D^2},\rel\partial)$ denotes the space of paths $F_s$, $s\in[0,1]$, of vector bundle isomorphisms that start at a point $F_0=*$ and end at
    $F_1\in\operatorname{Maps}(\D^2,\U(1))$,  
    that are fixed at the boundary of $\D^2$,
    where we see $\U(1)$ as a subset of $\operatorname{SO}(3)\subset\GL^+(3,\R)$.

    We can express the fiber as follows:
    \begin{align*}
        \Path_{(*, \U(1))}\operatorname{Iso}(T\NS^3|_{\D^2},\rel \partial) 
        & = \Path_{(*, \U(1))}\operatorname{Maps}(\D^2, \GL^+(3, \R),\rel \partial) \\
        & \cong \Path_{(*, \U(1))}\operatorname{Maps}(\NS^2, \GL^+(3, \R),\rel p) \\
        & \cong \Path_{(*, \U(1))}\operatorname{Maps}(\NS^2, \SO(3),\rel p),
    \end{align*}
    where $p\in \NS^2$.
    
    On the other hand, it is known that $\Path_{(*, \U(1))}\operatorname{Maps}(\NS^2, \SO(3), \rel p) = \Path_{(*, \U(1))}\Omega^2(\SO(3))$ projects to $\operatorname{Maps}(\NS^2, \U(1),\rel p) = \Omega^2(\U(1))$. Thus, we obtain the fibration
    $$
    \Omega^3(\SO(3)) \hookrightarrow \Path_{(*, \U(1))} \Omega^2(\SO(3)) \to \Omega^2(\U(1)),
    $$
    where the projection map consists on taking the endpoint of the path, and $\Omega^2(\U(1)) \cong \Omega^2(\NS^1)$, which is contractible. Consequently, $\Omega^3(\SO(3))$ is homotopically equivalent to the fiber of fibration (\ref{fibracion-Path}), and therefore we have the following long exact sequence:
    $$\xymatrix{  & \cdots\ar"1,3" & \pi_1(\Emb(\D^2, \NS^3, \rel \partial))\ar"2,1"\\ 
    \pi_0(\Omega^3(\SO(3)))\ar"2,2" & \pi_0(\FEmb(\D^2, \F, (\NS^3, \xi), \rel \partial))\ar"2,3" & \pi_0(\Emb(\D^2, \NS^3, \rel \partial)).}
    $$
    Since $\Emb(\D^2, \NS^3, \rel \partial)$ is contractible by Theorem \ref{thm:HatcherDisks}, and 
    $\pi_0( \Omega^3(\SO(3))) = \pi_3(\SO(3)) = \mathbb{Z}$, we conclude that:
    $$
    \pi_0(\Emb(\D^2, \F, (\NS^3, \xi), \rel \partial))\cong \pi_0(\FEmb(\D^2, \F, (\NS^3, \xi), \rel \partial))\cong\Z.
    $$

\end{proof}

\subsection{Contactomorphisms of an overtwisted $3$-sphere}

We start with the following observation 

\begin{lemma}\label{lem:Rotations}
    Let $(\NS^3,\xi)$ be an overtwisted contact $3$-sphere. Then, the natural $1$-jet evaluation map $\Cont(\NS^3,\xi)\rightarrow \CFra(\NS^3,\xi)$ is surjective at the level of fundamental groups.
\end{lemma}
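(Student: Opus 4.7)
The plan is to compute $\pi_1(\CFra(\NS^3,\xi))$ explicitly and then realize its generator by a concrete loop of contactomorphisms supported in a Darboux ball around a base point.

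First, I identify the target. Since $\CFra(\NS^3,\xi)\to\NS^3$ is a principal $\U(1)$-bundle over a simply-connected base, and since $H^2(\NS^3;\Z)=0$ forces every such bundle to be trivial, there is an identification $\CFra(\NS^3,\xi)\cong \NS^3\times \U(1)$. In particular, $\pi_1(\CFra(\NS^3,\xi))\cong \Z$, generated by the $\U(1)$-fiber loop over any chosen base contact frame $(p_0,e_0)$. Since $\pi_1(\NS^3)=0$, after a preliminary contact isotopy one may assume any candidate loop in $\Cont(\NS^3,\xi)$ fixes $p_0$ throughout; hence the $1$-jet evaluation lands in the fiber $\U(1)$, and it suffices to exhibit a loop $\{\varphi_t\}_{t\in[0,1]}\subset \Cont(\NS^3,\xi)$ based at $\Id$ with $\varphi_t(p_0)=p_0$ and $d\varphi_t|_{\xi_{p_0}}$ equal to rotation by $2\pi t$.

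Next, I run a local construction. Fix a Darboux neighborhood $U$ of $p_0$, identified via a contactomorphism with $(\R^3,\ker(dz+r^2\,d\theta))$, sending $p_0$ to the origin. The contact Hamiltonian $H=r^2/2$ generates the pure rotation $(r,\theta,z)\mapsto(r,\theta+t,z)$, which is a loop of contactomorphisms of $\R^3$ at $t=2\pi$ but is not compactly supported. Multiply $H$ by a smooth cutoff $\chi:\R^3\to[0,1]$ which equals $1$ on a small ball $V$ around the origin and is compactly supported in $U$. The flow $\psi_t$ of $\chi H$ is supported in $U$, extends by the identity to a contact isotopy of $(\NS^3,\xi)$, and agrees with pure rotation on $V$. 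In particular, $d\psi_t|_{\xi_{p_0}}$ is rotation by angle $t$. At $t=2\pi$, the time-$2\pi$ map $\psi_{2\pi}$ is the identity both on $V$ (by full rotation) and outside $\supp(\chi)$, but is in general a nontrivial contactomorphism supported in the annular region $A=\supp(\chi)\setminus V$.

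The final step is to close $\{\psi_t\}_{t\in [0,2\pi]}$ into a loop at $\Id$ without undoing the winding of the $1$-jet at $p_0$. It suffices to produce a contact isotopy from $\psi_{2\pi}$ to $\Id$ consisting of contactomorphisms supported in $A$ (hence identity on $V$, so the $1$-jet at $p_0$ is unchanged). Concatenation with $\{\psi_t\}$ then yields a loop whose image in $\CFra(\NS^3,\xi)$ winds once around the $\U(1)$-fiber, giving the desired generator.

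The main obstacle is the closing step. While the construction of $\psi_t$ itself is routine, one needs to verify that $\psi_{2\pi}$ lies in the identity component of the group of contactomorphisms of $\NS^3$ supported in $A$. I expect this can be established by an explicit interpolation of cutoff functions, together with the local rotational structure of $\xi$ in the Darboux ball, possibly combined with an h-principle argument to absorb the transition region (e.g., using Eliashberg's overtwisted $h$-principle from Theorem \ref{thm:h-OT} applied to an overtwisted disk placed in $\NS^3\setminus \overline{\supp(\chi)}$). The careful bookkeeping required to ensure the closing isotopy remains identity on $V$ throughout is the technical heart of the argument.
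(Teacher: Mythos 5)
There is a genuine gap, and it sits exactly where you locate it: the closing step. Your path $\{\psi_t\}_{t\in[0,2\pi]}$ is not a loop, and everything hinges on producing a path from $\psi_{2\pi}$ back to $\Id$ in $\Cont(\NS^3,\xi)$ along which the $1$-jet at $p_0$ does not unwind. None of the remedies you sketch achieves this. The most natural ``interpolation of cutoff functions'' (e.g.\ the time-$2\pi$ flows of $X_{(1-s)\chi H}$, $s\in[0,1]$) does connect $\psi_{2\pi}$ to $\Id$, but it rotates $\xi_{p_0}$ back by $-2\pi s$, so the concatenated loop in $\CFra(\NS^3,\xi)$ is null-homotopic --- this is precisely the mechanism by which the construction can fail. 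As for the $h$-principle: Theorem \ref{thm:ContOT} produces isotopies \emph{relative to a fixed overtwisted disk}, not relative to the ball $V$, so even granting that $\psi_{2\pi}$ is formally trivial it only yields a closing path with no control over the $1$-jet at $p_0$; the resulting ambiguity in the class of the closed-up loop is exactly a coset of $\ev_*\pi_1(\Cont(\NS^3,\xi))$, i.e.\ of the subgroup you are trying to show is everything. Proving that $\psi_{2\pi}$ lies in the identity component of $\Cont(\NS^3,\xi,\rel V)$ is essentially as hard as the lemma itself (compare Theorem \ref{thm:ContactomorphismS3}, whose proof in the paper uses this very lemma), so the proposal is circular at its core.

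The paper's proof is global and one line long: within its isotopy class one may choose $\xi$ to be invariant under the circle action $(z_1,z_2)\mapsto(z_1,e^{i\theta}z_2)$ on $\NS^3\subseteq\mathbb{C}^2$ (for instance, starting from $\xi_{\std}$ and performing Lutz twists along rotation-invariant transverse unknots to realize every homotopy class of plane fields). The rotation is then a genuine \emph{loop} of contactomorphisms --- no closing step is needed --- and evaluating the $1$-jet at a fixed point $p_0\in\{z_2=0\}$ shows that it traverses the $\U(1)$-fiber of $\CFra(\NS^3,\xi)$ once, hence hits the generator of $\pi_1$. If you wish to retain a local argument, you must supply an actual proof that $\psi_{2\pi}$ can be undone relative to a neighborhood of $p_0$; as written, the proposal does not prove the lemma.
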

\begin{proof}
    This follows from the fact that we can choose $\xi$ to be invariant under rotations around one complex plane in $\mathbb{C}^2\supseteq \NS^3$.
\end{proof}

Given a point $p\in \NS^3$ we will denote by $\Cont(\NS^3,\xi,\rel p)\subseteq \Cont(\NS^3,\xi)$ the subspace of contactomorphisms that are fixed near a small Darboux disk containing $p$. We will make use of the following

\begin{theorem}[Chekanov, Vogel \cite{VogelOvertwisted}]\label{thm:ContactomorphismS3}
    Let $(\NS^3,\xi)$ be an overtwisted contact $3$-sphere and $p\in \NS^3$ a given point. Then, 
  \[\pi_0(\Cont(\NS^3,\xi,\rel p)\cong \pi_0(\Cont(\NS^3, \xi)) \cong 
    \begin{cases}
        \Z_2 \oplus \Z_2 & \text{if } \xi \cong \xi_{-1}, \\
        \Z_2 & \text{otherwise}
         \end{cases}\]
\end{theorem}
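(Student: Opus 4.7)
The plan is to follow the same overall strategy as the other main results in the paper: combine the h-principle relative to a fixed overtwisted disk (Theorem \ref{thm:ContOT}) with the long exact sequence of a fibration obtained by evaluating contactomorphisms on an overtwisted disk.

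First I would reduce to the absolute case. The $1$-jet evaluation fibration
\[
\Cont(\NS^3,\xi,\rel p) \hookrightarrow \Cont(\NS^3,\xi) \to \CFra(\NS^3,\xi) \simeq \NS^3 \times \NS^1
\]
(where the weak equivalence of the base uses $e(\xi)=0$, so $\xi$ is trivial as a cooriented plane bundle) together with Lemma \ref{lem:Rotations}, which ensures that the connecting homomorphism $\pi_1(\CFra) \to \pi_0(\Cont(\rel p))$ is zero, yields $\pi_0(\Cont(\rel p))\cong \pi_0(\Cont)$.

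Next, fix an overtwisted disk $\Delta_{\ot}$ disjoint from a Darboux ball around $p$, and consider the evaluation-at-$\Delta_{\ot}$ fibration
\[
\Cont(\NS^3,\xi,\rel (p\cup \Delta_{\ot})) \hookrightarrow \Cont(\NS^3,\xi,\rel p) \to \Emb_{\ot}(\D^2,(\NS^3\setminus \Op(p),\xi)).
\]
A variant of Theorem \ref{teo:OvertwistedDisks} identifies the base with $\CFra(\NS^3\setminus \Op(p),\xi)\simeq \NS^1$, giving $\pi_0=0$ and $\pi_1=\Z$. The h-principle Theorem \ref{thm:ContOT} reduces the fiber's $\pi_0$ to a formal computation. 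Cerf-Hatcher contractibility of $\Diff(\D^3,\rel \partial)$ trivialises the diffeomorphism contribution, while the path datum $F_s$ lives in a path space governed by the fibration $\U(1)\to \SO(3) \to \NS^2$. Collapsing the fixed locus yields $\NS^3/(\D^2 \sqcup \{p\})\simeq \NS^3 \vee \NS^1$, and a standard obstruction-theory computation gives
\[
\pi_0(\FCont(\NS^3,\xi,\rel (p\cup \Delta_{\ot}))) \cong \pi_4(\NS^2)\oplus \pi_2(\NS^2) \cong \Z_2 \oplus \Z,
\]
where the $\Z_2$-factor comes from the $\Omega^3$ contribution of the $\NS^3$-wedge summand and the $\Z$-factor comes from the $\Omega$ contribution of the $\NS^1$-wedge summand.

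The main obstacle is the final step: explicitly computing the connecting homomorphism $\Z = \pi_1(\Emb_{\ot}) \to \Z_2 \oplus \Z$, whose cokernel equals $\pi_0(\Cont(\NS^3,\xi))$. Geometrically, the generator of the source is a full rotation of the overtwisted disk at its origin, and its lift to a loop of contactomorphisms is controlled by Huang's bypass-triangle theorem (Theorem \ref{thm:Huang-hopf}) relating rotation data to Hopf invariants. A careful analysis, sensitive to the Hopf invariant $k$ of $\xi\cong \xi_k$, should reveal that the image in the $\Z$-factor is $\Z$ generically and $2\Z$ when $k=-1$, while the image in the $\Z_2$-factor always vanishes. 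Taking cokernels then gives $\Z_2$ for $k\neq -1$ and $\Z_2\oplus \Z_2$ for $k=-1$, where the extra $\Z_2$-summand is realised by the exotic contactomorphism of Chekanov-Vogel.
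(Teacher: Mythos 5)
Your first paragraph is exactly the paper's proof: the authors simply cite Vogel's Theorem~4.5 for $\pi_0(\Cont(\NS^3,\xi))$ and deduce the relative version from the fibration $\Cont(\NS^3,\xi,\rel p)\hookrightarrow\Cont(\NS^3,\xi)\to\CFra(\NS^3,\xi)$ together with Lemma~\ref{lem:Rotations}, which kills the connecting map $\pi_1(\CFra)\to\pi_0(\Cont(\rel p))$. That reduction is correct and is all the paper does. Everything after your first paragraph is an attempt to reprove the Chekanov--Vogel theorem itself, and there the argument has genuine gaps.

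The most concrete one is the identification of the base of your second fibration. The space you need is $\Emb_{\ot}(\D^2,(\NS^3\setminus\Op(p),\xi))$, the space of \emph{all} overtwisted disk embeddings in the complement of a Darboux ball. Theorem~\ref{teo:OvertwistedDisks} only identifies $\Emb_{\ot}(\D^2,(M\setminus\Delta_{\ot},\xi))$ with $\CFra(M\setminus\Delta_{\ot},\xi)$, i.e.\ overtwisted disks in the complement of a \emph{fixed overtwisted disk}; no ``variant'' extends this to the complement of a Darboux ball. Indeed, for $\xi\cong\xi_{-1}$ the space of overtwisted disks in $(\NS^3,\xi_{-1})$ is disconnected (this is Vogel's theorem, and it is exactly what Remark~\ref{rmk:Counterexample} exploits), so your claim that $\pi_0$ of the base vanishes is false in precisely the case responsible for the extra $\Z_2$. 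With the wrong base, the long exact sequence bookkeeping that follows cannot be salvaged.

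Second, even granting the setup, the step you flag as ``the main obstacle'' --- computing the connecting homomorphism $\pi_1(\Emb_{\ot})\to\Z_2\oplus\Z$ and showing its image is $2\Z$ exactly when $k=-1$ --- is not a loose end: it \emph{is} the Chekanov--Vogel theorem. The assertion that Huang's bypass-triangle result (Theorem~\ref{thm:Huang-hopf}) controls this lift is not substantiated and is not how Vogel's argument proceeds; his proof rests on a delicate analysis of $\pi_0$ of the space of overtwisted disks, which your setup has already assumed trivial. As written, the proposal assumes the conclusion at the point where the real work lies. The correct move here is the paper's: cite \cite{VogelOvertwisted} for the absolute statement and keep only your first paragraph.
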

\begin{proof}
    Without fixing the Darboux ball this is \cite[Theorem 4.5]{VogelOvertwisted}. The case in which the Darboux ball is fixed follows from this case by studying the fibration 
    $$ \Cont(\NS^3,\xi,\rel p) \hookrightarrow \Cont(\NS^3,\xi)\rightarrow \CFra(\NS^3,\xi) $$ and using Lemma \ref{lem:Rotations}.
\end{proof}

\subsection{Proof of Corollary \ref{cor:unknots}}

Let $\Lambda\subseteq (\D^3,\xi_{\std})\subseteq (M,\xi)$ be a Legendrian unknot that lies in a Darboux ball. Fix an embedded convex disk $\D^2\subseteq (M,\xi)$ with $\partial \D^2=\Lambda$ and tight characteristic foliation $\F$. We will consider the auxiliary spaces $\Emb(\D^2, \F, (M, \xi), \rel p)$ and $\FEmb(\D^2, \F, (M, \xi), \rel p)$ of embeddings and formal embeddings of disks with characteristic foliation $\F$ that are fixed near a point $p\in \Lambda=\partial \D^2$.
    
Consider the following commutative diagram in which the rows are Serre fibrations and the columns are the natural inclusions:
    $$\xymatrix{
    \Emb(\D^2, \F, (M, \xi), \rel \partial) \ar"1,2"\ar@{^(->}[d] & \Emb(\D^2, \F, (M, \xi), \rel p) \ar"1,3" \ar@{^(->}[d] & \mathcal{L}_{(p,v)}(\Lambda, (M, \xi))\ar@{^(->}[d]\\
     \FEmb(\D^2, \F, (M, \xi), \rel \partial) \ar"2,2" &  \FEmb(\D^2, \F, (M, \xi), \rel p) \ar"2,3" & F\mathcal{L}_{(p,v)}(\Lambda, (M, \xi))}$$
     Here, $F\mathcal{L}_{(p,v)}(\Lambda, (M, \xi))=\{(\gamma,F_s)\in F\mathcal{L}(\Lambda, (M, \xi)): \gamma(0)=p, F_s(0)=v, s\in[0,1]\}.$
    It was observed in \cite{FMP22} that $\Emb(\D^2, \F, (M, \xi), \rel p)$ is contractible. On the other hand, it is straightforward to check that $\FEmb(\D^2, \F, (M, \xi), \rel p)$ is also contractible. Therefore, there is a commutative square 
    $$\xymatrix{
    \Omega\mathcal{L}_{(p,v)}(\Lambda, (M, \xi)) \ar"1,2"\ar@{^(->}[d] & \Emb(\D^2, \F, (M, \xi), \rel \partial) \ar@{^(->}[d] \\
     \Omega F\mathcal{L}_{(p,v)}(\Lambda, (M, \xi)) \ar"2,2" &  \FEmb(\D^2, \F, (M, \xi), \rel \partial)}$$
     in which the horizontal maps are weak homotopy equivalences.

     Therefore, by Theorem \ref{thm:principal}, we conclude that the inclusion $\mathcal{L}_{(p,v)}(\Lambda, (M, \xi))\hookrightarrow F\mathcal{L}_{(p,v)}(\Lambda, (M, \xi))$ induces an isomorphism at the level of fundamental groups. 

     To conclude the proof we will consider the commutative diagram 

    $$\xymatrix{
    \mathcal{L}_{(p,v)}(\Lambda, (M, \xi)) \ar"1,2"\ar@{^(->}[d] & \mathcal{L}(\Lambda, (M, \xi)) \ar"1,3" \ar@{^(->}[d] & \CFra(M, \xi))\ar@{^(->}[d]\\
     \widetilde{F\mathcal{L}}_{(p,v)}(\Lambda, (M, \xi)) \ar"2,2" &  F\mathcal{L}(\Lambda, (M, \xi)) \ar"2,3" & \CFra(M, \xi))}$$
    induced by the evaluation map $F\mathcal{L}(\Lambda, (M, \xi))\rightarrow \CFra(M,\xi),(\gamma,F_s)\mapsto (\gamma(0),F_1(0)).$ Here, the rows are Serre fibrations and the colums are inclusions. Notice that the fiber $\widetilde{F\mathcal{L}}_{(p,v)}(\Lambda, (M, \xi))$ is weakly homotopy equivalent to $F\mathcal{L}_{(p,v)}(\Lambda, (M, \xi))$. The result now follows by analyzing the long exact sequences in homotopy and the Five Lemma since the homomorphism $$\pi_1( \mathcal{L}_{(p,v)}(\Lambda, (M, \xi)))\rightarrow \pi_1( F\mathcal{L}_{(p,v)}(\Lambda, (M, \xi)) ) $$ is an isomorphism as we just proved. This concludes the argument.
\qed

\subsection{Proof of Corollary \ref{cor:UnknotsS3}}
   Consider the fibration 

    \begin{equation}\label{eq:evaluationLegS3} \mathcal{L}_{(p,v)}(\Lambda, (\NS^3, \xi)) \hookrightarrow \mathcal{L}(\NS^3,\xi)\rightarrow \CFra(\NS^3,\xi) 
    \end{equation}

    Notice that $\CFra(\NS^3,\xi)\cong \NS^3\times \U(1)$ since every oriented plane bundle over $\NS^3$ is trivializable. In particular, $\pi_1( \CFra(\NS^3,\xi) )\cong \Z $ and $\pi_2 (\CFra(\NS^3,\xi))=0$. Moreover, as explained in the proof of Corollary \ref{cor:unknots} there is an isomorphism $$ \pi_1( \mathcal{L}_{(p,v)}(\Lambda, (\NS^3, \xi)) ) \cong \pi_0( \Emb(\D^2, \F, (M, \xi), \rel \partial) ) \cong \Z, $$
    where in the last isomorphism we have used Corollary \ref{cor:DisksS3}. Using all this information in the long exact sequence in homotopy associated to the fibration (\ref{eq:evaluationLegS3}) we conclude that there is a short exact sequence 

    $$ 0\mapsto \Z\cong  \pi_1( \mathcal{L}_{(p,v)}(\Lambda, (\NS^3, \xi)) ) \rightarrow \pi_1 ( \mathcal{L}(\Lambda, (\NS^3, \xi)) ) \rightarrow \Z \rightarrow 0 $$

    From here it follows that $\pi_1 ( \mathcal{L}(\Lambda, (\NS^3, \xi)) )$ is generated by two independent loops of Legendrian unknots as described in Corollary \ref{cor:UnknotsS3}. One loop $A$ represented by a full-rotation of $\Lambda$ in a Darboux chart and the other loop $B$ represented by a loop of long Legendrian unknots, which is contractible as a loop of long smooth unknots by a result of Hatcher \cite{Smale}. Moreover, it follows that $\pi_1 ( \mathcal{L}(\Lambda, (\NS^3, \xi)) )$ is isomorphic to either $\Z\oplus \Z$ or $\Z \rtimes \Z$\footnote{Recall that, since $\operatorname{Aut}(\Z)=\{\Id,-\Id\}$, there are exactly two semidirect products of $\Z$ with itself, and only one of the two is commutative, which is precisely the trivial product.}. Since the latter group is non abelian it enough to check that the loops $A$ and $B$ commute to conclude the proof. 

    By Lemma \ref{lem:Rotations} the map $\pi_1(\Cont(\NS^3,\xi))\rightarrow \pi_1(\CFra(\NS^3,\xi))$ is surjective. Therefore, there exists a loop of contactomorphisms $\varphi^t\in \Cont(\NS^3,\xi)$, $t\in [0,1]$, $\varphi^0=\varphi^1=\Id$, such that $A=\varphi^t(\Lambda)$. We will further assume that $\varphi^t=\Id$ for $t\in [0,\frac{1}{2}]$. On the other hand, we may represent $B=\Lambda ^ t$, $t\in [0,1]$, by a loop of long Legendrians such that $\Lambda ^t = \Lambda$ for all $t\in [\frac{1}{2},1]$. It then follows that the concatenation $A*B$ can be represented by the loop $\varphi^t(\Lambda^t)$, $t\in [0,1]$, it is now straightforward to check that $A*B=B*A$ as elements in $\pi_1(\mathcal{L}(\Lambda, (\NS^3, \xi)))$. This concludes the argument.
\qed

\subsection{Proof of Corollary \ref{cor:pi0complementounknot}}

Let $\Lambda\subseteq (\NS^3,\xi)$ be some Legendrian unknot with non-positive $\tb$ in an overtwisted contact $3$-sphere. We fix an embedded convex disk $\D^2\subseteq (\NS^3,\xi)$ with boundary $\partial \D^2=\Lambda$. In the case in which $\tb(\Lambda)+|\rot(\Lambda)|\leq -1$ we will assume that the characteristic foliation $\F$ of $\D^2$ is tight. We will make the identification $\Cont(C(\Lambda),\xi)=\Cont(\NS^3,\xi,\rel \Lambda)$.

We consider first the case in which $\tb(\Lambda)+|\rot(\Lambda)|>-1$. Consider the natural fibration 
\begin{equation}\label{eq:fibrationContDisks}
\Cont(\NS^3,\xi,\rel \D^2)\hookrightarrow \Cont(\NS^3,\xi,\rel \Lambda)\rightarrow \Emb(\D^2,\F,(\NS^3,\xi),\rel \partial).
\end{equation}
given by post-composition of the inclusion $\D^2\hookrightarrow (\NS^3,\xi)$ with a contactomorphism. 

We claim that the fibration map is surjective in homotopy groups. Indeed, this is an application of Eliashberg's $h$-principle (Theorem \ref{thm:h-OT}). We explain the non-parametric case since the general case is analogous. Given two elements $e,f\in \Emb(\D^2,\F,(\NS^3,\xi),\rel \partial)$ it is not hard to find an orientation preserving diffeomorphism $\varphi\in \Diff(\NS^3)$ such that $f=\varphi\circ e$, since the space of smooth embeddings is connected by Schoenflies Theorem (in the parametric case, we use that this space is contractible by Theorem 
\ref{thm:HatcherDisks}). Since the embeddings $e$ and $f$ have the same characteristic foliation we may assume that $\varphi$ is an actual contactomorphism over $(\Op(\image(e)),\xi)$. The contact structures $\varphi_* \xi$ and $\xi$ are homotopic as formal contact structures relative $\Op(\image(f))$ (in the parametric case, we use that the obtained spheres of formal contact structures are contractible because the group $\Diff(\NS^3,\rel \Lambda)$ is contractible by a combination of Theorem \ref{thm:HatcherDisks} and the Hatcher's proof of the Smale conjecture \cite{Smale}\footnote{This appears as one of the equivalent forms of the Smale conjecture at the end of \cite{Smale}. To prove this equivalence Theorem \ref{thm:HatcherDisks} is needed.}, so it 
acts trivially in homotopy on the space of formal contact structures). Since $(\Op(\image(f)),\xi)$ is overtwisted, by the assumption $\tb(\Lambda)+|\rot(\Lambda)|>-1$, we can use Theorem \ref{thm:h-OT} to find a homotopy of contact structures $\xi^t\in \CStr(\NS^3)$, $t\in [0,1]$, such that $\xi^0=\varphi_*\xi$, $\xi^1=\xi$ and 
$\xi^t_{|\Op(\image(f))}=\xi_{|\Op(\image(f))}$ for all $t\in [0,1]$. By Gray stability there exists an isotopy $g^t\in \Diff(\NS^3,\rel \image(f))$, $t\in [0,1]$, such that $g^t_*\xi^0=\xi^t$. The diffeomorphism $G=g^1\circ \varphi\in \Cont(\NS^3,\xi,\rel\Lambda)$ and satisfies $G_* \xi=\xi$, as required. 

On the other hand, it was observed by Dymara in \cite{Dymara} that there is a weak homotopy equivalence $\Cont(\NS^3,\xi,\rel \D^2)\cong \FCont(\NS^3,\xi,\rel \D^2)\cong\Omega^4 \NS^2$. Here, we are using again that $(\Op(\D^2),\xi)$ is overtwisted so $(\NS^3\backslash\D^2,\xi)$ is overtwisted at infinity (see Theorem \ref{thm:ContOT}). In particular, $$ \pi_0(\Cont(\NS^3,\xi,\rel \D^2))\cong\pi_0(\FCont(\NS^3,\xi,\rel \D^2))\cong\pi_4(\NS^2)\cong\Z_2.$$ On the other hand, by Theorem \ref{thm:principal} and Corollary \ref{cor:DisksS3} there is an isomorphism 
$$ \pi_0( \Emb(\D^2,\F,(\NS^3,\xi),\rel \partial) )\cong \pi_0(\FEmb(\D^2,\F,(\NS^3,\xi),\rel \partial) ) \cong \Z.$$

Hence, it follows from the short exact sequence in homotopy associated to the fibration (\ref{eq:fibrationContDisks}) that there is a short exact sequence 
$$ 0\rightarrow \Z_2\rightarrow \pi_0(\Cont(\NS^3,\xi,\rel \Lambda))\rightarrow \Z\rightarrow 0 $$
from which it follows that $$ \pi_0(\Cont(C(\Lambda),\xi))=\pi_0(\Cont(\NS^3,\xi,\rel \Lambda))=\Z\oplus \Z_2. $$
This concludes the argument in the case that $\tb(\Lambda)+|\rot(\Lambda)|>-1$.

We now deal with the case $\tb(\Lambda)+|\rot(\Lambda)|\leq -1$. Consider the fibration
$$ \Cont(\NS^3,\xi,\rel \Lambda)\hookrightarrow \Cont(\NS^3,\xi,\rel p) \rightarrow \mathcal{L}_{(p,v)}(\Lambda,(\NS^3,\xi)) $$
given by post-composition of the inclusion $\Lambda\hookrightarrow (\NS^3,\xi)$ with a given contactomorphism. Observe that the fibration map factors as 
$$ \Cont(\NS^3,\xi,\rel p) \rightarrow \Emb(\D^2,\F,(\NS^3,\xi),\rel p) \rightarrow \mathcal{L}_{(p,v)}(\Lambda,(\NS^3,\xi)). $$
Therefore, since the space $\Emb(\D^2,\F,(\NS^3,\xi),\rel p)$ is contractible because $\F$ is tight by \cite{FMP22} we conclude that the fibration map is null-homotopic. In particular, there are weak homotopy equivalences 
$$ \Cont(\NS^3,\xi,\rel \Lambda)\cong \Cont(\NS^3,\xi,\rel p)\times \Omega \mathcal{L}_{(p,v)}(\Lambda,(\NS^3,\xi)) \cong \Cont(\NS^3,\xi,\rel p)\times \Emb(\D^2,\F,(\NS^3,\xi),\rel \partial ),$$ where the last one was explained in the proof of Corollary \ref{cor:unknots}. The result now follows by combining Corollary \ref{cor:DisksS3} and Theorem \ref{thm:ContactomorphismS3}.
\qed

\bibliographystyle{plain}
\bibliography{main}

\end{document}